\providecommand{\U}[1]{\protect\rule{.1in}{.1in}}
\newcommand{\C}{{\mathbb C}}
\newcommand{\R} {\mathbb R}
\newcommand{\cuad}{{\sqcap\kern-.68em\sqcup}}
\newcommand{\ve}{\varepsilon}
\newcommand{\be}{\begin{equation}}
\newcommand{\ee}{\end{equation}}
\newcommand{\st}{{\tt t}}
\newcommand{\sy}{{\tt y}}
\newcommand{\pD}{{\mathring D_\tau}}
\definecolor{darkgreen}{rgb}{0.2,0.7,0.1}
\newcommand{\blue}{\color{black}}
\newcommand*{\ep}{\varepsilon}
\newcommand{\modulo}{{\,\mathrm{mod}\,}}
\numberwithin{equation}{section}
\newtheorem{theorem}{Theorem}[section]
\newtheorem{proposition}{Proposition}[section]
\newtheorem{lemma}{Lemma}[section]
\newtheorem{remark}{Remark}[section]
\title[Jacobi fields of  rotationally symmetric  solutions to the Cahn-Hillard equation]{Nondegeneracy and the Jacobi fields of  rotationally symmetric  solutions to the Cahn-Hillard equation}
\author{\'Alvaro Hern\'andez}
\address{Universidad de Los Andes, Facultad de Ingenier\'ia y Ciencias Aplicadas. }
\email{ahernandez2@miuandes.cl}
\author{Micha{\l } Kowalczyk}
\address{Departamento de Ingenier\'{\i}a Matem\'atica and Centro
de Modelamiento Matem\'atico (UMI 2807 CNRS), Universidad de Chile.}
\email {kowalczy@dim.uchile.cl}
\thanks{M. Kowalczyk was partially supported by Chilean research grants Fondecyt 1130126, 1170164 and Fondo Basal CMM-Chile}
\subjclass{ 35J61}
\begin{document}

\maketitle

\begin{abstract}
In this paper we study  rotationally  symmetric solutions of the Cahn-Hilliard equation in $\R^3$ constructed in \cite{ch_arxiv} by the authors. These solutions form a one parameter family analog to the family of  Delaunay surfaces and in fact the zero level sets of their blowdowns approach these surfaces. Presently we go a step further and show that  their stability properties are  inherited from the stability properties of the Delaunay surfaces. Our main result states that the rotationally symmetric solutions are non degenerate and that they have exactly $6$ Jacobi fields of temperate growth coming from the natural invariances of the problem (3 translations and 2 rotations) and the variation of the Delaunay parameter. 
\end{abstract}

\section{Introduction} 
\subsection{Statement of the main result}

In the classical van der Waals-Cahn-Hilliard  theory that describes the process of phase separation of two components of a binary  alloy one considers 
the Helmholtz free energy functional 
\begin{equation}\label{helmholtz energy}
 E_{{\ep}}(u)=\int_\Omega \left(F(u(x))+\frac12\ep^2|\nabla
u(x)|^2\right)\ dx
\end{equation}
in $H^{-1}(\Omega)$ subject to the average concentration to be constant, i.e. 
\begin{equation}
\label{mass constr}
 \frac1{|\Omega|}\int_\Omega u\ dx =m, 
\end{equation} 
where $m\in[-1,1]$ (see \cite{MR1772733}, \cite{MR1901064} for details) and  
the double-well potential $F(u)$ corresponds to the free energy density at low temperatures, which this paper we will take explicitly 
\begin{equation*}
 F(u)=\frac14\left(1-u^2\right)^2,\quad F'(u)=u^3-u.
\end{equation*}
From now on we will denote $F'(u)=-f(u)$. Note that  constant functions $u\equiv \pm 1$ are minimizers of this functional subject to  $m=\pm 1$. 
The  Euler-Lagrange equation  (with $f(u)=-F'(u)$) is 
\begin{equation}\label{CH stat}
\begin{aligned}
\ep^2\Delta u+f(u)=\delta_\ep\qquad &\text{ in }\Omega,\\
\frac{\partial u}{\partial \nu}=0\qquad &\text{ on }\partial\Omega,\\
\frac1{|\Omega|}\int_\Omega u\ dx=m
 \end{aligned}
\end{equation}
where $\delta_\ep$ is a Lagrange multiplier.

 Using $\Gamma$-convergence approach Modica \cite{MR866718}  showed that  minimizers $u_\ep$ of \eqref{helmholtz energy} subject to constraint (\ref{mass constr}) 
 $\Gamma$-converge, {as $\ep\to 0$}, to the function $1-2\chi_{A_0}$, where
$\chi_{A_0}$ is the characteristic function of an open set 
$A_0\subset\Omega$. Moreover $\partial A_0\cap \Omega$ is locally a surface of  constant mean curvature (CMC surface for short). Geometrically the set $A_0$ minimizes the perimeter functional $\mathrm{Per}_\Omega(A)$ among the sets $A\subset \Omega$ whose volume is fixed.  A generalisation of these results was given by Sternberg \cite{MR930124}.  Furthermore Hutchinson and Tonegawa \cite{MR1803974} studied limits of general critical points \eqref{helmholtz energy} and showed that their limits are locally minimal or CMC surfaces. On the other hand it  is known \cite{MR985990} that if a set $A\subset \Omega$  is an {\it isolated} mimimizer of the perimeter functional subject to the constant volume constraint then there exists a sequence of minimizers $u_\ep$ of (\ref{helmholtz energy}) which $\Gamma$ converges to $A$. This result can be used to construct solutions to (\ref{CH stat}) at least in dimension $2$ , see \cite{MR1399196}. The most complete construction is due to  Pacard and Ritor\'e \cite{MR2032110} 
who proved the following:  if $M$ is a  compact Riemannian manifold and $N$ is a non degenerate minimal or CMC sub manifold of $M$ which divides $M$ into $2$ disjoint components then for all sufficiently small  $\ve$ there exist critical points of \eqref{helmholtz energy} whose $0$ level set converges to $N$.  
The counterpart of this theory for the time dependent problem  was developed among others by Alikakos, Bates and Chen \cite{MR1308851} who proved that as $\ve\to 0$ the time evolution of interfaces is governed by the Helle-Shaw problem-of course CMC surfaces are stationary points of the flow. More detailed description of the Cahn-Hilliard flow and key spectral tools   can be found for instance in  \cite{MR1237062}, \cite{MR1382059}, \cite{MR1613496}, \cite{MR1797871}, \cite{MR1284813} and the references therein. Additional examples of stationary solutions for the singular perturbation problem  in a bounded domain have been constructed  in \cite{MR1632937}, \cite{MR1636688}, \cite{MR1737000}.

In this paper we consider  stationary solutions of the Cahn-Hilliard   in the whole space, namely solutions to  the following problem:
\begin{equation}
\label{CH1}
\Delta u+f(u)=\delta, \quad \mbox{in}\ \R^3,
\end{equation}
It is convenient to rescale the equation (\ref{CH1}) by dilation of the independent  variable by a (large) factor $\ep^{-1}>0$
\[
{\tt x}\longmapsto \ep^{-1}{\tt x},
\]
and  obtain an equivalent form of (\ref {CH1}):
\begin{equation}
\label{CH}
\ep\Delta u+\frac{1}{\ep}f(u)={{\ell}}_\ep, \quad \mbox{in}\
\R^{{^3}}.
\end{equation} 
where we have denoted $\frac{\delta}{\ep}={{\ell}}_\ep$. Clearly, if
$u_\ep$ is a solution of (\ref{CH}) then $v({\tt x})= u_\ep\big({\ep}{\tt
x}\big)$ is a solution of (\ref{CH1}). On the other hand, if $v$ is a solution
of (\ref{CH1}) then $u_\ep({\tt x})=v\big(\frac{\tt x}{\ep}\big)$ is a solution
of (\ref{CH}). In particular this means that while phase transition of the
solutions of (\ref{CH1}) are of order $1$, for the solutions of (\ref{CH}) they
are of order $\ep$. Thus the latter are more "concentrated" and are  blowdowns of the former. In the sequel we
will focus on on the form  (\ref{CH}) of the stationary Cahn-Hilliard equation.  From what we have said above about the
singular perturbation problem  it is clear that level sets of these solutions
should converge, as $\ep$ tends to $0$, to CMC surfaces in $\R^3$. 

Now we will describe a family of such solutions. Let $D_\tau$, $\tau (0,1)$ be a  Delaunay unduloid and  let  $N_\tau$  be its  normal  vector field. Without loss of generality we may assume that $D_\tau$ is normalised in such a way that its mean curvature is $1$. 
Let us notice that the surface $D_{\tau}$ divides the space into two disjoint  components $\Omega^\pm_\tau$, such that  $\R^3\setminus D_{\tau}={\Omega}_{\tau}^+\cup{\Omega}_{\tau}^-$, where  $N_\tau$ points towards ${\Omega}_{\tau}^+$. By changing the orientation of $D_\tau$ if necessary  we can  chose $N_\tau$ in such a way that $\Omega_\tau^+$ contains the axis of symmetry of the surface. 
The following result is proven in \cite{ch_arxiv}:
\begin{theorem}\label{teorema 1}
 For all $\tau\in(0,1)$ there exits $\ep_\tau>0$ such that for all $\ep\in (0, \ep_\tau)$  the problem 
\begin{equation}\label{CH ep}
\ep \Delta u+\frac{1}{\ep}f(u)={{\ell}}_\ep\quad\text{ in}\ \R^3
\end{equation} 
has a solution $w_{\tau}$, which is  one-periodic in the direction of the 
$z$-axis and rotationally symmetric with respect to rotations about the same
axis. As $\ep\to 0$ we have  ${{\ell}}_\ep =1+\mathcal O(\ep)$, and 
$w_{\tau}$  satisfies
\begin{align*}
w_{\tau}\to 1 \text{ as }\ep\to 0&\text{ in }{\Omega}_{\tau}^+,\\
w_{\tau}\to -1\text{ as }\ep\to 0&\text{ in }{\Omega}_{\tau}^-,
\end{align*} 
uniformly over compacts.  Moreover $w_\tau$ is differentiable as a function of the parameter $\tau$.
\end{theorem}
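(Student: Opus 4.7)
The plan is an infinite-dimensional Lyapunov--Schmidt reduction performed entirely inside the class $\mathcal{X}$ of functions which are rotationally symmetric about the axis of $D_\tau$ and $1$-periodic along it. Working in $\mathcal{X}$ is natural since $D_\tau$ itself enjoys both symmetries, and it has the decisive advantage of killing, a priori, all symmetry-breaking modes in the would-be kernel of the linearized operator, leaving only a very low-dimensional obstruction that can be absorbed into $\ell_\ep$ and into the axial phase of the ansatz.

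Let $H(t)=\tanh(t/\sqrt 2)$ be the heteroclinic profile of $H''+f(H)=0$, $H(\pm\infty)=\pm 1$. Set up Fermi coordinates $(t,y)$ in a tubular neighborhood of $D_\tau$, with $t$ the signed distance (positive into $\Omega_\tau^+$), and define
\[
w_\tau^0(x)=H\bigl(t(x)/\ep\bigr)
\]
inside this neighborhood, interpolating smoothly to $\pm 1$ outside; the resulting function lies in $\mathcal{X}$. Expanding $\Delta$ in Fermi coordinates and using the fact that $D_\tau$ is CMC yields
\[
\ep\,\Delta w_\tau^0+\tfrac1\ep f(w_\tau^0)=-\mathcal H_{D_\tau}\,H'(t/\ep)+O(\ep),
\]
uniformly near $D_\tau$. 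After an $O(\ep)$ correction of the ansatz, solvable in a one-dimensional transverse problem orthogonal to $H'$, the residual is pushed to $O(\ep^2)$ and the Lagrange multiplier is forced into the form $\ell_\ep=1+O(\ep)$.

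Writing $u=w_\tau^0+\phi$, we reformulate the equation as a nonlinear fixed-point problem for $\phi$ in a weighted H\"older space of $\mathcal X$-functions. The linearization $L_\ep=\ep\Delta-\tfrac{1}{\ep}W''(H(t/\ep))$ has approximate kernel spanned by functions of the form $H'(t/\ep)\,\varphi(y)$, where $\varphi$ is a tempered element of the kernel of the Jacobi operator $J_{D_\tau}$ acting on normal sections in $\mathcal X$. Restricted to $\mathcal X$, $J_{D_\tau}$ reduces to a Hill-type ODE on the meridian whose tempered kernel is two-dimensional, generated by the infinitesimal $z$-translation of $D_\tau$ and by $\partial_\tau D_\tau$. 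A standard Lyapunov--Schmidt decomposition peels off these modes; the projected linear problem is uniformly invertible in $\ep$, contraction mapping produces $\phi$, and the two residual obstructions are killed by fine-tuning $\ell_\ep$ (the $\tau$-mode) and the axial phase of the ansatz (the translation mode). Differentiability of $w_\tau$ in $\tau$ then follows from the implicit function theorem applied to the reduced finite-dimensional system.

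The hardest step is the uniform-in-$\ep$ resolvent estimate for $L_\ep$ on the projected subspace of $\mathcal X$. It rests on two ingredients: the well-known nondegenerate spectral structure of $-\partial_t^2+f'(H)$ on $\mathbb R$ (whose bounded kernel is exactly $\mathbb R H'$ with a positive spectral gap above), and a Floquet/Fredholm analysis of $J_{D_\tau}$ on the non-compact periodic surface $D_\tau$ restricted to $\mathcal X$. The latter requires an explicit enumeration of all tempered Jacobi fields of $D_\tau$ in the symmetric class so that they may be matched to the two natural invariances and excised from the linear problem; this geometric ingredient, essentially borrowed here from the CMC theory of Delaunay surfaces, is precisely the analysis that the main body of the present paper revisits and sharpens.
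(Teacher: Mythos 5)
Note first that Theorem~\ref{teorema 1} is not proven in this paper: it is imported verbatim from the authors' earlier work~\cite{ch_arxiv}, and the text says so explicitly just before the statement. There is therefore no proof in the present paper to compare yours against line by line. What the paper does record about the construction (mainly in Section~\ref{sec two end}) is: the refined transverse ansatz $U$ solving~\eqref{eq U} (a perturbation of the heteroclinic $\varTheta$, not the bare profile $H(t/\ep)$), the weighted H\"older framework~\eqref{holder 2}, the asymptotics~\eqref{asymp wtau}--\eqref{asymp wtau 2}, and the remark that differentiability in $\tau$ is obtained from a Banach fixed-point argument in the spirit of~\cite{MR660633}, all resting on the nondegeneracy and classification of the Jacobi fields of $D_\tau$. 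Your sketch --- Fermi coordinates, ansatz $H(t/\ep)$ plus an $\Or\ep$ correction, Lyapunov--Schmidt in the rotationally symmetric periodic class, invertibility of the projected operator from the spectral gap of $-\partial_t^2+f'(\varTheta)$ together with a Floquet analysis of $\mathcal J_{D_\tau}$, and finally the implicit function theorem for $\tau$-differentiability --- is fully consistent with this picture and with the general gluing philosophy of~\cite{MR2032110}.

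One place where the sketch should be tightened before it could be called a proof: you work in the space $\mathcal X$ of rotationally symmetric, axially periodic functions, yet you simultaneously speak of the \emph{tempered} kernel of $\mathcal J_{D_\tau}$ in $\mathcal X$ and claim it is two-dimensional (spanned by $\Phi_\tau^{T,{\tt e}_3}$ and $\Phi_\tau^D$). These two choices are in tension: $\Phi_\tau^D$ grows linearly in $s$, so it is not periodic and does not live in $\mathcal X$. On the periodic quotient the rotationally symmetric kernel of $\mathcal J_{D_\tau}$ is one-dimensional ($z$-translation), and this mode is most cleanly removed by imposing evenness in $z$ rather than by a projection. The Delaunay direction then does not appear as a genuine kernel obstruction; it enters through the determination of the Lagrange multiplier $\ell_\ep$ (equivalently, of which Delaunay surface the solution concentrates on) and through the matching between $T_\tau$ and the imposed period. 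Your identification of the two free parameters ($\ell_\ep$ and the axial phase) with the two ``obstructions'' is morally right, but as written the reduction conflates the periodic and tempered settings and would not close without making this bookkeeping precise.
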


The solution described in the above theorem  can be translated in the direction of the coordinate axis and rotated about the $x$ and $y$ axis (accepting that the $z$ axis is its axis of the rotational symmetry). Additionally the parameter of the family $\tau\in (0,1)$ can be varied as well. The $5$ symmetries and $\partial_\tau w_\tau$ determine  $6$ Jacobi fields of the linearized operator
\begin{equation}
\label{jac 1}
L_{w_\tau}=\ve\Delta+\frac{1}{\ve} f'(w_\tau).
\end{equation}
We will call them the {\it geometric} Jacobi fields.  
It is natural to ask whether {\it all} Jacobi fields come  from these natural invariances. 
The answer is provided by our main result:
\begin{theorem}\label{main theorem}
\begin{itemize}
\item[(i)]
For all $\tau\in (0,1)$ and all small $\ve$ the operator $L_{w_\tau}$ is nondegenerate in the sense that  $\mathrm{Ker}\, L_{w_\tau}=0$. 
\item[(ii)]
There exists $a>0$ such that the linear subspace of solutions of  
\[
L_{w_\tau}\phi=0,
\]  
with temperate growth in the direction of the axis of rotation of $w_\tau$ i.e. such that 
\begin{equation}
\|\phi e^{\,-a|z|}\|_{L^2(\R^3)}<\infty,
\label{temperate a}
\end{equation}
has dimension  $6$ and coincides with the linear subspace of the geometric Jacobi fields. 
\end{itemize}
\end{theorem}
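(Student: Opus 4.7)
The plan is to exploit the axial symmetry of $w_\tau$ through a Fourier decomposition in the angular coordinate $\theta$, and then to reduce the kernel equation $L_{w_\tau}\phi=0$ to the Jacobi equation on the Delaunay surface $D_\tau$, whose space of temperate solutions is classically known. Writing $\phi(r,\theta,z)=\sum_{k\in\Z}\phi_k(r,z)e^{ik\theta}$, each mode satisfies an operator $L_k$ obtained from $L_{w_\tau}$ by substituting $-k^2/r^2$ for the angular Laplacian, so the problem decouples into a family of scalar problems on the half plane $\{r>0,\ z\in\R\}$ that can be analysed mode by mode.

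The core technical step is a Fermi-coordinate matched asymptotic analysis. In a tubular $O(\ep)$-neighbourhood of $D_\tau$ we introduce the signed distance $s$ and the stretched variable $t=s/\ep$; for $\phi$ satisfying \eqref{temperate a}, the accuracy of the approximate solution used in the proof of Theorem \ref{teorema 1} yields an inner expansion
\begin{equation*}
\phi(y,t)=h(y)\,H'(t)+\ep\,\phi^{(1)}(y,t)+\cdots,
\end{equation*}
where $y\in D_\tau$ and $H$ is the one-dimensional heteroclinic joining $\pm 1$. Outside this neighbourhood the potential $\frac{1}{\ep}f'(w_\tau)$ is uniformly negative, and weighted energy estimates force exponential decay of $\phi$ in the normal direction. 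Projecting $L_{w_\tau}\phi=0$ against $H'(t)$ across the normal fibres produces the Fredholm solvability condition $J_{D_\tau}h=0$, where $J_{D_\tau}$ is the Jacobi operator of the Delaunay surface, and the bound \eqref{temperate a} transfers to an analogous temperate bound for $h$ along the axis of $D_\tau$. Conversely, the construction of \cite{ch_arxiv} can be run in reverse to lift each temperate Jacobi field on $D_\tau$ to a genuine kernel element of $L_{w_\tau}$, so the two spaces are canonically isomorphic.

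Granted the reduction, the classical description of temperate Jacobi fields of the Delaunay unduloid (Mazzeo--Pacard and related work) gives the mode count: in $k=0$ the temperate kernel of $J_{D_\tau}$ has real dimension two, generated by the vertical translation $\partial_z w_\tau$ and the Delaunay parameter $\partial_\tau w_\tau$; in the combined modes $k=\pm 1$ it has real dimension four, generated by $\partial_x w_\tau$, $\partial_y w_\tau$, and the two infinitesimal rotations about horizontal axes (which grow linearly in $z$); for $|k|\geq 2$ it is trivial, because the positive angular term $k^2/r^2$ renders the induced scalar operator strictly coercive on temperate fields. The resulting six-dimensional space is exactly the span of the geometric Jacobi fields, which proves (ii); part (i) then follows immediately since none of these six fields lies in $L^2(\R^3)$ --- the four bounded ones are periodic in $z$, while the two rotation-generated ones grow linearly. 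The principal obstacle will be the uniform implementation of the matched asymptotic reduction in the temperate-growth class, which is considerably larger than the weighted $L^2$ spaces typically used in Lyapunov--Schmidt arguments: one must control both the normal decay of $\phi$ and the expansion error terms uniformly along the noncompact direction of $D_\tau$, in particular to exclude spurious kernel elements concentrated far from the interface.
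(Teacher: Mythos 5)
Your proposal captures the right conceptual skeleton --- reduce the kernel problem for $L_{w_\tau}$ to the kernel of the Jacobi operator $\mathcal J_{D_\tau}$ by projecting against the heteroclinic profile in the Fermi normal direction, then invoke the classical Mazzeo--Pacard classification of temperate Jacobi fields on Delaunay unduloids, and observe that the six geometric Jacobi fields of $w_\tau$ all fail to decay along the axis so part (i) follows. However, the organizing tool you choose (Fourier decomposition in $\theta$) is not what carries the argument, and the step you explicitly flag as ``the principal obstacle'' is exactly where the paper spends essentially all of its effort, with machinery that you do not supply and that is not obviously replaceable by a hand-waved matched-asymptotics expansion.

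Concretely, the paper does not reduce along angular modes; it exploits the $T_\tau$-periodicity of $w_\tau$ in $z$ and uses the Fourier--Laplace transform in $z$ (following Taubes and Mazzeo--Pollack--Uhlenbeck) to turn $L_{w_\tau}$ into a holomorphic family $\hat L_{w_\tau}(\zeta)$ of Fredholm operators on a fundamental period. This is what gives, via the analytic Fredholm theorem, the invertibility of $L_{w_\tau}$ on exponentially weighted spaces $L^2_{a,\gamma}(\R^3)$ for all but finitely many rates $a$, and from that the \emph{linear decomposition formula} and the deficiency-space structure $\mathcal D_{w_\tau}=\mathcal K_{w_\tau}\oplus\mathcal E_{w_\tau}$ with $\dim\mathcal E_{w_\tau}=\dim\mathcal K_{w_\tau}$. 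Only after this framework is in place does the paper show, in Proposition 3.3, that $\mathcal K_{w_\tau}=\mathcal I_{w_\tau}$, and it does so by a rigorous Lyapunov--Schmidt decomposition $\varphi=\psi\,{\tt V}+\varphi^\parallel$ near the interface, a coercivity estimate on the orthogonal complement (Lemma 3.4), a quantitative bound $\|\varphi^\parallel\|\lesssim \ve\|\psi\|$ obtained \emph{again} via the Fourier--Laplace transform, and the resulting perturbed Jacobi equation $\mathcal J_{D_\tau}\psi=T(\varphi^\parallel,\psi)$ with $\|T\|$ small. Your sketch replaces all of this by writing a formal inner expansion $\phi=h\,H'+\ep\phi^{(1)}+\cdots$ and ``projecting''; in the temperate class, with exponential growth along $D_\tau$ allowed and the Fermi coordinates valid only on a shrinking tube, this ansatz is not a proof and you acknowledge as much. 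In particular you give no mechanism to exclude kernel elements concentrated far from $D_\tau$ (the paper's concentration estimate, Lemma 3.1, does this), no proof that the orthogonal part $\varphi^\parallel$ is subordinate to $\psi$, and no justification for the claim that the temperate growth bound on $\phi$ ``transfers'' to $h$.

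Two smaller points. First, your bookkeeping of the geometric Jacobi fields is off: there are three bounded fields (the three translations) and three linearly growing ones (the two rotations and the Delaunay-parameter variation), not four bounded and two growing; the conclusion that none is in $L^2$ survives, but the count is wrong. Second, the claim that ``the construction of \cite{ch_arxiv} can be run in reverse to lift each temperate Jacobi field on $D_\tau$'' is misleading: there is nothing to lift, because the six geometric Jacobi fields are obtained directly by differentiating the families of translates, rotates and Delaunay-parameter deformations of $w_\tau$ (as in formula (2.33) and Lemma 2.1 of the paper), and what actually needs a proof is that no \emph{other} temperate kernel element exists.
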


We will see that any geometric Jacobi field is either bounded or it grows linearly in the direction of the $z$ axis. Note however that our theorem does not exclude the possibility of existence of a solution of $L_{w_\tau}\phi=0$ such that $\phi$ satisfies (\ref{temperate a}) with some large value of $a$.  

To explain the  importance of this result let us go back to the construction of Pacard and Ritor\'e \cite{MR2032110}. They consider problem (\ref{CH}) but instead of the whole space on  a smooth,  compact manifold $M$. They assume that there exists  $N\subset M$ which is a smooth sub manifold of constant mean curvature such that it is the nodal set of  a smooth function on $M$ for which $0$ is a regular value. In particular it follows that $N$ divides $M$ into two disjoint components $M^\pm(N)$, similarly as $D_\tau$ divides $\R^3$. Furthermore it is assumed that $N$ is non degenerate in the sense that the kernel of the Jacobi operator of $N$
\[
\mathcal L_N=\Delta_N+|A_N|^2+\mathrm{Ric}_g (\nu_N, \nu_N)
\]
is empty. 
Under these hypothesis it is shown in \cite{MR2032110} that for any small $\ve$ there exists a solution of the Cahn-Hilliard equation (\ref{CH stat}) which converges uniformly to $\pm 1$ over compact subsets of $M^\pm (N)$.    Our existence result in Theorem \ref{teorema 1} also relies on the non degeneracy of the Delaunay surfaces, which in this case means that their Jacobi operator does not have kernel, and moreover it uses the fact that the Jacobi fields of these surfaces can be classified. Theorem \ref{main theorem} goes further since it provides a classification of the Jacobi fields of the family $w_\tau$ of  rotationally symmetric solutions of (\ref{CH}). This type of result is crucial if one wants to construct new solutions  to (\ref{CH1})  build upon more complicated CMC surfaces in $\R^3$, such as some of those constructed for instance in  \cite{MR1043269}, \cite{MR903742}, \cite{mpp},  \cite{maz_pac_pol},  \cite{MR1807955}, \cite{MR2194146}, \cite{MR2590386} (see also related construction in  \cite{Kowalczyk:2014sf} for the Allen-Cahn equation on the plane).

To explain this let us recall that a non compact, Alexandrov embedded, complete  CMC surface with finite topology outside of a compact set consists of finitely many half Delaunay surfaces (\cite{meeks_cmc}, \cite{meeks1987}, \cite{MR1010168}) called Delaunay ends. In addition if the number of ends of such surface is $k$ and this surface is non degenerate then set of nearby CMC surfaces is an analytic manifold of dimension $3k$. This  was proven by Kusner, Mazzeo and Pollack   in \cite{MR1371233}  and the argument of their paper is in many ways inspired by the similar result for the singular Yamabe problem \cite{MR1356375}.
One of the problems is to decide whether given CMC surface is non degenerate and this is rather difficult problem except for the Delaunay surface for which separation of variables and ODE methods can be used to prove non degeneracy (see also \cite{MR2255385}). Pushing these arguments further one can also classify Jacobi fields with temperate growth \cite{mpp} and show that all of them came from the natural invariances of the family of Delaunay  surfaces. Starting from non degenerate Delaunay surface with $k$ ends  one can built more complicated examples by gluing to it either an extra end or another non degenerate surface and thus obtain CMC surfaces with arbitrary many ends.   In some cases these new surfaces are also nondegenerate, see for instance  \cite{mpp},  \cite{maz_pac_pol}, \cite{MR2194146}, \cite{MR2590386}.

Theorem  \ref{main theorem} is the precise analog of the result proven in \cite{mpp} but in the case of the Cahn-Hilliard equation. Given what we said about the linear properties of the Delaunay surface its assertion is expected, which does not mean that the proof is equally obvious.  Certainly what needs to be done is to connect the stability properties of the Delaunay surface $D_\tau$ and the corresponding solution $w_\tau$ of (\ref{CH}) and this can be achieved by  expressing $w_\tau$ in the Fermi coordinates of $D_\tau$ (Section \ref{sec fermi}). While $w_\tau$ is localized near $D_\tau$ this kind of expression is only valid in a neighbourhood of the surface and this is what complicates the situation (see Section \ref{sec two end}).  In order to deal with this in this paper we  replace the operator $L_{w_\tau}$ with another operator ${\mathbb L}_{w_\tau}$ (Section \ref{sec 222}), which locally agrees with the original one but which is easier to analyze. Using  this idea in   Section \ref{section 3} we prove our theorem.  

In this paper $C, c$ will stand for generic positive constants, $\delta$ will be a small, $\ve$ independent constant and $\alpha\in (0,1)$ will be a constant as well. 

\section{Preliminaries}

\subsection{The surfaces of Delaunay}\label{sec delaunay}\setcounter{equation}{0} 
As we have seen, our results are based in great part and in some sense they parallel the  theory of CMC surfaces in $\R^3$ and because of this we begin by describing the basic geometric object in this paper which is the family of  Delaunay surfaces. The following is a summary of what can be found for instance in \cite{MR1941630} or \cite{MR1807955}.
The Delaunay unduloids $D_\tau$, $\tau\in (0, 1)$   are CMC surfaces of revolution in $\R^3$ and based on this one can easily  parametrize them. Indeed, such parametrization has form
\begin{equation}\label{dela 0}
R_\tau (z, \theta)=(\rho_\tau(z)\theta, z), \quad (z, \theta)\in \R\times S^{1},
\end{equation} 
where 
\[
\frac{2\rho_\tau}{\sqrt{1+(\partial_z\rho_{t})^2}}-\rho_\tau^2=\tau^2, \quad \rho_\tau(0)=1-\sqrt{1-\tau^2}.
\]

We can   "normalize'' the Delaunay surface and suppose that the mean
curvature of $D_\tau$ is $1$ for all $\tau\in (0, 1)$.  
A convenient way to parametrize Delaunay unduloids is to use the  isothermal coordinates:
\begin{equation}
\label{dela 1}
X_\tau(s, \theta)=\frac{1}{2}(\tau e^{\,\sigma_\tau(s)}\theta, \kappa_\tau(s)), \quad (s, \theta)\in \R\times S^{1},
\end{equation}
where functions $(\sigma_\tau, \kappa_\tau)$ are the unique solutions of the following system of ODEs:
\begin{equation}
\label{dela 2}
\begin{aligned}
(\partial_s\sigma_\tau)^2+\tau^2\cosh^2\sigma_\tau=1, &\quad \partial_s\sigma_\tau(0)=0, \quad \sigma_\tau(0)<0, \\
\partial_s\kappa_\tau-\tau^2\cosh^2{\,\sigma_\tau}=0, &\quad \kappa_\tau(0)=0.
\end{aligned}
\end{equation}
We will now summarize some basic facts about the Delaunay surfaces and their isothermal parametrization (we reproduce here as well as elsewhere in this section the results proven  in  \cite{mpp}, \cite{MR1356375}). We note first of all that  $\sigma_\tau$ is periodic, and consequently the surfaces $D_\tau$ are one-periodic along the $z$-axis: namely, if $T_\tau$ denotes the minimal  period then
\[
D_\tau=D_\tau+T_\tau {\tt e}_3.
\]
Clearly we have the relation 
\[
T_\tau=\frac{1}{2} \kappa_\tau(s_\tau),
\]
where $s_\tau$ is the minimal period of $\sigma_\tau$. 

The Jacobi operator $\mathcal J_{D_\tau}$ of $D_\tau$ is defined by:
\begin{equation}
\mathcal J_{D_\tau}:=\Delta_{D_\tau}+|A_\tau|^2,
\label{def jacobi}
\end{equation}
where $\Delta_{D_\tau}$ is the Laplace-Beltrami operator on $D_\tau$ and  $|A_\tau|^2$ is the square of the norm of the second fundamental form of $D_\tau$.  In the isothermal coordinates $(s, \theta)\in \R\times S^1$ its expression is given by:
\begin{equation}
\label{jacob 1}
{\mathcal J}_{D_\tau}=\frac{1}{\tau^2 e^{\,2\sigma_\tau}}\left\{\partial_s^2+\partial_\theta^2+\tau^2\cosh (2\sigma_\tau)\right\}.
\end{equation}
The geometric Jacobi fields on $D_\tau$  solve   $\mathcal J_{D_\tau}\Phi =0$ and are of three types:
\begin{itemize}
\item[(1)] {\it The Jacobi fields arising from  infinitesimal translations.} For any  ${\tt e}\in \R^3$, $|{\tt e}|=1$ we define:
\[
\Phi_\tau^{T, {\tt e}}={\tt e}\cdot N_\tau,
\]
where $N_\tau$ is the unit normal vector to $D_\tau$. The coordinate vectors ${\tt e}_j$, $j=1,2,3$ generate three linearly independent Jacobi fields $\Phi^{T, {\tt e}_j}_\tau$ corresponding to translations of $D_\tau$ in the directions of the coordinate axis. 
We note that in the isothermal coordinates  
\[
\Phi_\tau^{T, {\tt e}_3}=\Phi_\tau^{T, {\tt e}_3}(s), \quad \Phi_\tau^{T, {\tt e}_j}=\Phi_\tau^{T, {\tt e}_j}(s, \theta), \quad j=1,2.
\]
It is important to notice that the Jacobi fields  $\Phi_\tau^{T, {\tt e}_j}$ are bounded.
\item[(2)]{\it The Jacobi fields arising from infinitesimal  rotations.} 
Let ${\tt e}\in \R^3$, $|{\tt e}|=1$ be such that ${\tt e}\cdot {\tt e}_3=0$. The Killing vector field corresponding to the rotation about the vector ${\tt e}$ is:
\[
{\tt x}\longmapsto ({\tt x}\cdot{\tt e}){\tt e}_3-({\tt x}\cdot{\tt e}_3){\tt e}.
\] 
We define the Jacobi field associated to this vector field by:
\[
\Phi_\tau^{R, {\tt e}}=\left[({\tt x}\cdot{\tt e}){\tt e}_3-({\tt x}\cdot{\tt e}_3){\tt e}\right]\cdot N_\tau.
\]
There are clearly two linearly independent Jacobi fields associated to the rotations. They are:
\[
\Phi_\tau^{R, {\tt e}_1}, \quad \Phi_\tau^{R, {\tt e}_2},
\]
and they correspond to  rotations about the coordinate axis. Note that in isothermal coordinates functions $\Phi_\tau^{R, {\tt e}_j}$, $j=1,2$ grow linearly as functions of $s$.  
\item[(3)]{\it The Jacobi field associated with the variation of the Delaunay parameter.}
We define:
\[
\Phi^D_\tau=-\partial_\tau X_\tau\cdot N_\tau.
\]
This Jacobi field is somewhat harder to write explicitly however it can be  shown that the function $\Phi^D_\tau(s)$ is linearly growing.
\end{itemize}
In summary, the Jacobi operator ${\mathcal J}_{D_\tau}$ has at least $6$ explicit Jacobi fields which are either linearly growing or bounded. We   know that these are {\it all} Jacobi fields with temperate  growth. 

\subsection{The Fermi  coordinates near  Delaunay unduloids}\label{sec fermi}

\setcounter{equation}{0} 

Let $D_\tau$ be a Delaunay unduloid as above and,  let  $H_{D_\tau}$ denote its mean curvature.  By $N_\tau$ we will denote its inner unit normal. We will assume that there exists  a tubular neighborhood $\mathcal N_\delta$   of $D_\tau$ of width  $2\delta$ in which we can introduce local system of coordinates (Fermi coordinates) $(y, t)\in D_\tau \times (-\delta, \delta)$  setting:
\[
{\tt x}\longmapsto (y, t), \quad \mbox{where}\ {\tt x}=y+t N_\tau (y).
\]
We suppose that this map,  which we denote by $Y$,  is in fact a diffeomorphism from $\mathcal N_\delta$ to $D_\tau \times (-\delta, \delta)$ whenever $\delta$ is taken sufficiently small. In the sequel we will use the inverse of this  map
\[
\begin{aligned}
Y^{-1}\colon D_\tau \times(-\delta, \delta)&\longrightarrow \mathcal N_\delta\\
(y, t)&\longmapsto {\tt x}.
\end{aligned}
\]
Given a function $w\colon\mathcal N_\delta\to \R^n$ we define its pullback $Y^*w$ to $D_\tau \times(-\delta, \delta)$ by the diffeomorphism $Y$ as:
\[
Y^* w({y,t})=w\circ Y^{-1}(y,t).
\]
For technical reasons we will chose later the size of the tubular neighbourhood $\delta$ depending on $\ve$ but for now on we just take $\delta$ small. 

{
We will now derive formulas expressing the Laplace operator   $\Delta$  in $\R^3$  in terms of  the Fermi coordinates $(y, t)\in D_\tau \times (-\delta, \delta)$. We define for each $t\in(-\delta, \delta)$
\begin{align*}
D_{\tau, t}=\{x\in \mathcal N_\delta\mid\hbox{dist}\,(D_\tau, x)=t\}.
\end{align*}
In other words $D_{\tau, t}$ is the surface obtained from $D_\tau$ by translation in the direction of the normal by $t$. Then the well known formula gives:
\begin{align}
\Delta = \Delta_{D_{\tau, t}}+\partial_{tt}-H_{D_{\tau, t}}\partial_{t},
\label{laplace}
\end{align}
where $H_{D_{\tau, t}}$ denotes the mean curvature of ${D_{\tau, t}}$.  We need to expand these operators in terms of the variable $t$. By $g$ and $g_{{t}}$, respectively, we will denote the metric  on $D_\tau$, $D_{\tau, t}$ (induced from $\R^{3}$). Let us fix a point on $D_\tau$ and   some local parametrisation  $X(u)$, $u\in \mathcal U\subset \R^2$ of $D_\tau$ in a neighbourhood of this point ($X$ could be the isothermal coordinates but any parametrization will do).  In terms of these local coordinates  we get  the following relation:
\begin{align}
\label{metric 1}
g_{z,ij}=g_{ij}+t a_{ij}+ t^2 b_{ij},
\end{align}
where
\begin{align}
\label{metric 2}
\begin{aligned}
g_{ij}&=(\partial_{u_j} X\cdot \partial_{u_i} X), \quad a_{ij}=(\partial_{u_j} X\cdot\partial_{u_i} N_\tau)+(\partial_{u_i}X\cdot\partial_{u_j} N_\tau),  \\
b_{ij}&=(\partial_{u_i} N\cdot\partial_{u_j} N_\tau).
\end{aligned}
\end{align}
Then, for the matrix  $g^{-1}_{{t}}=(g^{ij}_{{t}})_{i,j=1,\dots, 2}$ we get, provided that $|t|$ is sufficiently small:
\begin{align}
\label{ginverse}
g^{-1}_{{t}}=g^{-1}+t M_1+t^2 M_2,
\end{align}
where 
\begin{align*}
M_1=M_1(u), \quad  M_2=M_2(u,t),
\end{align*} 
are smooth matrix  functions. The expression for the Laplace-Beltrami operator on $D_\tau$ in local coordinates is:
\begin{align*}
\Delta_{D_\tau}&=\frac{1}{\sqrt {\mathrm {det}\,(g)}}\partial_{u_j}\left(\sqrt {\mathrm {det}\,(g)}g^{ij}\partial_{u_i}\right)\\
&=g^{ij}\partial_{u_iu_j}+\frac{1}{\sqrt {\mathrm {det}\,(g)}}\partial_{u_j}\left(\sqrt {\mathrm {det}\,(g)}g^{ij}\right)\partial_{u_i}\\
&=g^{ij}\partial_{u_iu_j}-g^{k\ell}\Gamma_{k\ell}^i\partial_{u_i},
\end{align*}
where $\Gamma_{k\ell}^i$ are the Christoffell symbols. A  similar formula holds for $\Delta_{D_{\tau, t}}$. Using this we can write:
\begin{align*}
\Delta_{D_{\tau, t}}=\Delta_{D_\tau}+{c}_{ij}\partial_{u_iu_j}+{d}_{i}\partial_{u_i}.
\end{align*}
where 
\begin{equation}
\label{expr ab local}
\begin{aligned}
{c}_{ij}&=g^{ij}_{{t}}-g^{ij},\\
{d}_{i}&=g^{k\ell}_{{t}}\left(\Gamma_{t, k\ell}^i-\Gamma_{k\ell}^i\right)+\Gamma_{k\ell}^i\left(g^{k\ell}_{{t}}-g^{k\ell}\right).
\end{aligned}
\end{equation}
Expressions in local coordinates for ${c}_{ij}$, ${d}_{i}$ can be further  derived using the above expansions, however their exact form is not crucial here. The point is that these functions are  small in terms of $|t|$:
\begin{equation}
\label{ab est}
|{c}_{ij}(u,t)|+|{d}_i(u,t)|\leq C|t|.
\end{equation}
With a choice of local coordinates on $D_\tau$ the constant in the above estimate does not depend on the point on $D_\tau$.  

Next, we will expand the mean curvature $H_{D_{\tau,z}}$. To this end  by ${\Bbbk}_{j}$, $j=1, 2$ we will denote the principal curvatures of $D_\tau$. Then we have
\begin{align}
\label{expansion of H}
\begin{aligned}
H_{D_{\tau, t}}&=\sum_{j=1}^2\frac{\Bbbk_{j}}{1-t\Bbbk_{j}}\\
&=\sum_{j=1}^2\Bbbk_{j}+t\sum_{j=1}^2 \Bbbk_{j}^2+ \mathbb{Q},\\
&=H_{D_\tau}+t |A_{D_\tau}|^2+\mathbb Q,
\end{aligned}
\end{align}
where
\begin{align}
\label{expr q}
\mathbb Q(y,t) =t^2\sum_{j=1}^2 \Bbbk_{j}^3+t^3\sum_{j=1}^2 \Bbbk_{\ve, j}^4+\dots,
\end{align}
and $|A_{D_\tau}|$ is the norm of the second fundamental form on $D_\tau$.  Summarizing all this using   (\ref{laplace}) we can express the Laplace operator in Fermi coordinates as follows
\begin{equation}
\label{expr fermi}
\Delta = \partial_{tt} +\Delta_{D_\tau} -\left(H_{D_\tau}+t |A_{D_\tau}|^2+\mathbb Q\right)\partial_t+\mathbb A,
\end{equation}
where $\mathbb A$  is a differential operator whose coefficients are given in  (\ref{expr ab local}) and satisfy (\ref{ab est}).

Next we introduce  stretched  Fermi coordinates 
\[
{\tt t}=\frac{t}{\ep}, \quad {\tt y}=y.
\]
As before we have a diffeomorphism $Y_{\ve}$ and its inverse  $Y^{-1}_{\ve}\colon D_\tau \times(-\frac{\delta}{\ve}, \frac{\delta}{\ve})\to \mathcal N_{\delta}$, and for any function $w\colon\mathcal N_{\delta}\to \R^k$ we define its pullback by $Y_{\ve}$ by:
\[
Y^*_{\ve} w({\tt y}, {\tt t})=w\circ Y^{-1}_{\ve}({\tt y}, {\tt t}).
\] 
Taking into account formula (\ref{expr fermi}) we get
\begin{equation}
\label{fermi 2}
\Delta=\ve^{-2}\partial_{\st \st}-\ve^{-1}(H_{D_\tau}+\ve{\tt t}|A_{D_\tau}|^2+\mathbb Q_\ve)\partial_{\tt t}+\Delta_{D_\tau}+\mathbb A_\ve.
\end{equation}
where
\[
\mathbb Q_\ve(\sy, \st)=\mathbb Q(\sy, \ve \st), \qquad \mathbb A_\ve(\sy, \st)=\mathbb A(\sy, \ve\st).
\]


\subsection{Two ended Delaunay solutions of the Cahn-Hilliard equation} \label{sec two end}
Locally near the surface $D_\tau$ the function $w_\tau$, which is the solution of the Cahn-Hilliard equation described   in Theorem \ref{teorema 1} should, to main order, depend on the stretched Fermi variable $\st$ only. 
To find this first approximation of $w_\tau$ we use (\ref{fermi 2}) where we ignore  terms of order  $o(\ve)$.   Arguing formally we are lead to solving the following problem:
 \begin{equation}
\label{eq U}
\begin{aligned}
U''-\ve H_{D_\tau} U'+f(U)&=\ve\ell_\ve, \quad \mbox{in}\ \R,\\
f\left(U(\pm \infty)\right)&=\ve\ell_\ve,
\end{aligned}
\end{equation}
where we also have to determine the  Lagrange multiplier $\ell_\ve$. 
This function can be found by a straightforward perturbation argument assuming  $U=\varTheta+\mathcal O(\ve)$ where $\varTheta$ is the unique odd, monotonically increasing solution of the Allen-Cahn  equation (equivalent to setting $\ve=0$ in (\ref{eq U})), see \cite{ch_arxiv} for details. 
We have $U(\pm\infty)=\pm 1+\sigma_\ve^\pm$, where 
\begin{equation}
\label{def sigma}
f(\pm 1+\sigma_\ve^\pm)=\ve\ell_\ve.
\end{equation}
Also,
\[
\ell_\ve=\ell_0+\mathcal O(\ve), \quad \ell_0=-\frac{1}{2}H_{D_\tau}\int_\R \varTheta'(s)^2\,ds.
\]

Now we will briefly summarise some of the tools and results in \cite{ch_arxiv}.  From the proof of  Theorem \ref{teorema 1} we can describe the local   behaviour of $w_\tau$ in more details. To this end it is useful  to express $w_\tau$ near $D_\tau$ in  the local stretched Fermi coordinates $(\sy, \st)$ introduced above.    
We define weighted H\"older norms on $D_\tau \times \R$ by:
\begin{equation}\label{holder 2}
\begin{aligned}
\|u\|_{\mathcal C^{0,\alpha}_\mu(D_\tau \times \R)}&=\sup_{\st\in \R}\,(\cosh \st)^\mu
\|u\|_{\mathcal C^{0,\alpha}(D_{\tau}\times (\st -1, \st+1))},\\
{\|u\|_{\mathcal C^{1,\alpha}_\mu(D_{\tau}\times
\R)}}&={\|u\|_{\mathcal
C^{0,\alpha}_\mu(D_{\tau}\times \R)}+\|\nabla_{D_{\tau}\times \R}u\|_{\mathcal C^{0,\alpha}_\mu(D_{\tau}\times
\R)}},\\
\|u\|_{\mathcal C^{2,\alpha}_\mu(D_{\tau}\times \R)}&=\|u\|_{\mathcal
C^{0,\alpha}_\mu(D_{\tau}\times \R)}+\|\nabla_{D_{\tau}\times \R}u\|_{\mathcal C^{0,\alpha}_\mu(D_{\tau}\times
\R)}+\|{\nabla}^2_{D_{\tau}\times \R}u\|_{\mathcal
C^{0,\alpha}_\mu(D_{\tau}\times \R)}.
\end{aligned}
\end{equation}
With these definitions  there exists $\mu_0>0$ and $\alpha_0>0$ such that  for $0<\mu<\mu_0$ and $0<\alpha<\alpha_0$ it holds:
\begin{equation}
\label{asymp wtau}
Y^*_{\ve}w_\tau (\sy, \st)=U(\st)+\mathcal O_{\mathcal C^{2,\alpha}_{\mu} (D_\tau\times \R)}(\ve^{2-\alpha}).
\end{equation}
Above the symbol $\mathcal O_{\mathcal C^{2,\alpha}_{\mu} (D_\tau\times \R)}(\ve^{2-\alpha})$ denotes functions whose $\mathcal C^{2,\alpha}_{\mu} (D_\tau\times \R)$ norm is bounded by a constant times $\ve^{2-\alpha}$.  This formula is valid in a tubular neighbourhood $\mathcal N_{\delta(\ve)}$ of $D_\tau$, where $\delta(\ve)=\mathcal O(\ve^{{2}/{3}})$. In local variables this means $|\st|\leq C\ve^{-1/3}$. 
Outside of this neighbourhood we have
\begin{equation}
\label{asymp wtau 2}
w_\tau=\pm 1+\sigma_\ve^\pm+\psi, \qquad \mbox{where}\ \|\psi\|_{\mathcal C^2(\R^3)}\leq C e^{\,-c/\ve^{1/3}}. 
\end{equation}
In fact more is true.
We claim that  $w_\tau$  converges  exponentially to constants $\pm 1+\sigma_\ve$ away from $D_\tau$. More precisely, if $D_\tau$ is given as surface of revolution of the curve $x_1=\rho_\tau(z)$ then 
\begin{equation}
\label{exp decay w}
|w_\tau(r, z)+1-\sigma_\ve^-|\leq C\exp\big[-\frac{c}{\ve}(r-\rho_\tau(z))\big], \quad r>\rho_\tau(z),   \quad r=\sqrt{x_1^2+x_2^2},
\end{equation}
with similar estimate when $r<\rho_\tau(z)$. To prove this we note that (\ref{exp decay w}) is  valid in a tubular neighbourhood of $D_\tau$ by (\ref{asymp wtau}) and the fact that $\st$ and 
$\frac{(r-\rho_\tau(z))}{\ve}$ are comparable in this neighbourhood. Far form $D_\tau$ we use the fact that $w_\tau=\pm 1+\sigma_\ve^\pm+\psi$, where $\psi$ is an exponentially small in $\ve$ function (see (\ref{asymp wtau 2})) and a comparison argument. 
These estimates can be made more precise as far as the rate of exponential decay but we will not need such a precision here. 

One property that we will need in the sequel is differentiability of $w_\tau$ with respect to $\tau\in (0,1)$. Although this is not explicitly stated in \cite{ch_arxiv} this property also follows from the proof of Theorem  \ref{teorema 1} by a rather standard argument using the version of the Banach fixed point theorem in \cite{MR660633}. We will omit the details pointing out only that the ansatz $U(\st)$ is a differentiable function of $\tau$ since the Fermi coordinate is a smooth function of $\tau$. For future reference we note that  we have on $D_\tau$ (i.e. $\st=0$)
\[
\partial_\tau \st =\ve^{-1} \partial_\tau X_\tau\cdot N_\tau=-\ve^{-1}\Phi^D_\tau,
\] 
where $\Phi^D_\tau$ is the Jacobi field on $D_\tau$ associated with the change of the Delaunay parameter. 

\subsection{The linearized operator near $D_\tau$}\label{sec 222}
Our main objective in the next  section will be  to study the linearized operator of the Delaunay solution 
\begin{equation}
L_{w_\tau}=\ve\Delta+\frac{1}{\ve} f'(w_\tau),
\label{def lwtau}
\end{equation}
as an operator defined for functions on $\R^3$ and here we will introduce some basic observations and notations needed later.

Using (\ref{fermi 2}) we find  expression of  $L_{w_\tau}$ in stretched  Fermi coordinates in   $\mathcal N_{\delta}$:
\begin{equation}\label{fermi tilde L}
\begin{aligned}
Y^*_{\ve} L_{w_\tau}u&=\ve^{-1}\left[\partial_{\st \st} u-\ve (H_{D_\tau}+\ve{\tt t}|A_{D_\tau}|^2+\mathbb Q_\ve)\partial_{\tt t}u +f'(w_\tau)u\right]+\ve \Delta_{D_\tau}u +\ve \mathbb A_\ve u,
\end{aligned}
\end{equation}
where with some abuse of notation we write $u$ and $w_\tau$ instead of $Y_\ve^* u$ and $Y_\ve^* w_\tau$ (we will consistently abuse notation this way whenever it is unambiguous).
One technical problem we will have to face in this paper is the fact that while the operator $L_{w_\tau}$ is defined in $\R^3$ its expression in local coordinates $Y^*_{\ve}L_{w_\tau}$ makes sense only in $\mathcal N_\delta$ and not as we would like in $D_\tau\times \R$. There are possibly many ways to extend $Y^*_{\ve}L_{w_\tau}$ and  we will  chose one of them for the rest of the paper.   
Let $\chi(s)$ be a smooth nonnegative cut-off function equal to $1$ for $|s|\leq 1$ and equal to $0$ for $|s|>2$.  We set
\begin{equation}
\chi_{\ve/\delta}(\st)=\chi\left(\frac{\ve \st}{\delta}\right).
\label{def cutoff}
\end{equation}
We need to extend the function $Y^*_{\ve}w_\tau$ in such a way that it is defined outside of $\mathcal N_\delta$. To this end we set
\[
{\tt w}_\tau=\chi_{\ve/\delta}(\st) Y^*_{\ve} w_\tau+\left(1-\chi_{\ve/\delta}(\st)\right) U(\st). 
\] 
Next we define the extension of the operator $Y^*_{\ve}L_{w_\tau}$ by
\begin{equation}
\label{ext l wtau}
 \begin{aligned}
\mathbb L_{w_\tau}u&={\ve^{-1}}\left[\partial_{\st\st} u    -\ve \left(H_{D_{\tau}}+\ve\st \chi_{\ve/\delta}(\st)|A_{D_\tau}|^2+\chi_{\ve/\delta}(\st)\mathbb Q_\ve\right)\partial_\st u+f'({\tt w}_\tau)u \right] +\ve\Delta_{D_\tau} u +\ve \chi_{\ve/\delta}(\st)\mathbb A_\ve u.
\end{aligned}
\end{equation}
As we will see  $\mathbb L_{w_\tau}$   resembles  the operator
\[
\mathcal L u=\frac{1}{\ve}\left[\partial_{\st \st}u+f'(\varTheta) u\right] +\ve\left[\Delta_{D_\tau} u +|A_{D_\tau}|^2u \right] 
\]
whose kernel  is fairly easy to determine by separation of variables. Indeed, taking $u=\varTheta'(\st)\psi(\sy)$ we get 
\begin{equation}
\label{separate}
\mathcal L (\varTheta' \psi)=\ve \varTheta' \left[\Delta_{D_\tau}\psi +|A_{D_\tau}|^2\psi \right] 
\end{equation}
and therefore the Jacobi fields of $D_\tau$ determine the Jacobi fields of $\mathcal L$.   Let us explain in what sense $L_{w_\tau}$  and $\mathcal L$ are similar.  To do this we will use the operator $\mathbb L_{w_\tau}$ (our theory of the operator $L_{w_\tau}$ is based on exploiting this link). First we need a function which will play a role of  $\varTheta'(\st)$. Since our proof is based on a perturbation argument there is no unique way to define such a function   but a natural candidate  seems to be $\partial_{\st} Y^*_{\tau, h_0} w_\tau$.  An  important  observation to make  is that $\partial_{\st}$ and $\Delta$  do not commute so   we do not have $\mathbb L_{w_\tau}\partial_\st w_\tau=0$ and as we will see below the  commutator $[\Delta, \partial_\st]$  gives rise to the term $|A_{D_\tau}|^2$ in $\mathcal L$. Since  $\partial_\st Y^*_{\tau, h_0} w_\tau$ is defined only in $\mathcal N_\delta$ we define the  extension of this function to  $D_\tau\times \R$ by 
\begin{equation}
\label{def vv}
{\tt V}=\chi_{\ve/\delta}(\st)\partial_\st Y^*_{\ve} w_\tau+\left(1-\chi_{\ve/\delta}(\st)\right) U'(\st),
\end{equation}
where $U$ is the solution of (\ref{eq U}). Note that $V$ depends on $\st\in \R$ and $\sy\in D_\tau$ but using (\ref{asymp wtau}) we get 
\begin{equation}
{\tt V}(\sy, \st)= U'(\st)+\mathcal O_{C^{1,\alpha}_\mu(D_\tau\times \R)}(\ve^{2-\alpha}),
\label{vasymp}
\end{equation}  
globally on $D_\tau\times \R$, which means that the dependence on $\sy$ is mild.  
Next we  calculate 
\begin{equation}
\label{expr v}
\begin{aligned}
\mathbb L_{w_\tau} {\tt V}&=\chi_{\ve/\delta} \mathbb L_{w_\tau} \partial_{\st} Y^*_{\tau, h_0}w_\tau+\left(1-\chi_{\ve/\delta}\right) \mathbb L_{w_\tau}  U'+\left[\mathbb L_{w_\tau}, \chi_{\ve/\delta}\right] \partial_{\st}Y^*_{\tau, h_0}w_\tau+\left[\mathbb L_{w_\tau}, 1-\chi_{\ve/\delta}\right] U'.
\end{aligned}
\end{equation}
The first term above is the most complicated. For brevity let us denote $v_\tau=\partial_{\st}Y^*_{\tau, h_0}w_\tau$. With this notation differentiating the equation satisfied by $w_\tau$ in $\mathcal N_\delta$ with respect to $\st$  we have
\[
\mathbb L_{w_\tau} v_\tau - \ve |A_{D_\tau}|^2 v_\tau = \ve \left[\mathbb Q_\ve, \partial_{\st}\right] v_\tau+\ve \left[\mathbb A_\ve, \partial_\st\right] Y_\ve^* w_\tau,
\]
where  $\left[A, B\right]=AB-BA$.
By definition of $\mathbb Q_\ve$ we see that 
\[
\left[\mathbb Q_\ve, \partial_{\st}\right] v_\tau=-\ve^2 \left(2\st   \sum_{j=1}^2 \Bbbk_{j}^3+3\ve \st^2\sum_{j=1}^2 \Bbbk_{\ve, j}^4+\dots \right) v_\tau=\mathcal O_{C^{1, \alpha}_\mu(D_\tau\times \R)} (\ve^2)
\]
The differential operator $\mathbb A_\ve$ contains derivatives in $\sy\in D_\tau$ only while $Y_\ve^* w_\tau$ is, up to order $\mathcal O_{C^{2, \alpha}_\mu(D_\tau\times \R)} (\ve^{2-\alpha})$, a function of $\st$. This gives
\[
\ve \left[\mathbb A_\ve, \partial_\st\right] Y_\ve^* w_\tau=\ve \mathbb A_\ve\partial_\st Y_\ve^* w_\tau-\ve\partial_t \mathbb A(\sy, \ve \st)Y_\ve^* w_\tau=\mathcal O_{C^{0, \alpha}_\mu(D_\tau\times \R)} (\ve^{3-\alpha}).
\]
It follows that in $\mathcal N_\delta$ we get
\[
\mathbb L_{w_\tau} v_\tau - \ve |A_{D_\tau}|^2 v_\tau=\mathcal O_{C^{0, \alpha}_\mu(D_\tau\times \R)} (\ve^{3-\alpha})
\]
Considering other terms in (\ref{expr v}) from  the fact that $\chi_{\ve/\delta}=\chi_{\ve/\delta}(\st)$ and (\ref{asymp wtau}) we get
\[
\left[\mathbb L_{w_\tau}, \chi_{\ve/\delta}\right] \partial_{\st}Y^*_{\tau, h_0}w_\tau=\mathcal O_{C^{0, \alpha}_\mu(D_\tau\times \R)} (\ve^{3-\alpha}).
\]
Similar estimates hold for terms involving $U'(\st)$. In summary we get
\begin{equation}
\label{expr v2}
\mathbb L_{w_\tau} {\tt V}-\ve |A_{D_\tau}|^2 {\tt V}=\mathcal O_{C^{0, \alpha}_\mu(D_\tau\times \R)} (\ve^{3-\alpha}).
\end{equation}

Now let $\psi\in C^{2, \alpha}(D_\tau)$ be fixed. Using (\ref{expr v2}) we get
\begin{equation}
\label{expr v3}
\begin{aligned}
\mathbb L_{w_\tau} (\psi{\tt V})&=\psi \mathbb L_{w_\tau} {\tt V}+\ve {\tt V} \left(\Delta_{D_\tau} +\chi_{\ve/\delta}\mathbb A_\ve\right) \psi+\ve \left[\Delta_{D_\tau} +\chi_{\ve/\delta}\mathbb A_\ve, {\tt V}\right]\psi\\
&= \psi\left( \mathbb L_{w_\tau} {\tt V}-\ve|A_{D_\tau}|^2{\tt V}\right)+\ve {\tt V}\left(\Delta_{D_\tau} +|A_{D_\tau}|^2+\chi_{\ve/\delta}\mathbb A_\ve\right) \psi
+\ve \left[\Delta_{D_\tau} +\chi_{\ve/\delta}\mathbb A_\ve, {\tt V}\right]\psi-\ve\psi(\Delta+\chi_{\ve/\delta}){\tt V}\\
&=\ve{\tt V} \left(\mathcal J_{D_\tau}+\chi_{\ve/\delta}\mathbb A_\ve\right)\psi+ \ve \left[\Delta_{D_\tau} +\chi_{\ve/\delta}\mathbb A_\ve, {\tt V}\right]\psi+\mathcal O_{C^{0, \alpha}_\mu(D_\tau\times \R)} (\ve^{3-\alpha})\psi,
\end{aligned}
\end{equation}
where $\mathcal J_{D_\tau}$ is the Jacobi operator on $D_\tau$. For future reference we note that 
\begin{equation}
\label{expr v4}
\left\|\left[\Delta_{D_\tau} +\chi_{\ve/\delta}\mathbb A_\ve, {\tt V}\right]\psi\right\|_{C^{0,\alpha}_\mu(D_\tau\times \R)}\leq C\ve^{2-\alpha}\|\psi\|_{C^{1,\alpha}(D_\tau)}.
\end{equation}
Observe that formula (\ref{expr v3}) is quite similar to (\ref{separate}) and in particular it is clear that if $\mathbb L_{w_\tau} (\psi{\tt V})\approx 0$ then  $\psi$ should be  a Jacobi field on $D_\tau$, and as a consequence  we should get an approximate Jacobi field of $w_\tau$.
Indeed we can easily find  describe explicit   Jacobi fields of the two ended Delaunay solution $w_\tau$ which are approximately of the form $\psi {\tt V}$.  Let   ${\tt h}=\sum_{i=1}^3 h_i {\tt e}_i$ be a vector,   $\mathcal R_{\vartheta}({\tt x})=\mathcal R_{\vartheta_1, \vartheta_2}({\tt x})$ be a rotation in $\R^3$, where $\vartheta_i$ is the angle of the rotation about the $x_i$ axis $i=1,2$,   and $\eta$ be a number  such that  $|\eta|$ is small. Then the function
\[
\Phi_{{\tt h}, \vartheta, \eta}(w_\tau)=(w_{\tau+\eta}\circ \mathcal R_\vartheta)({\tt x}+{\tt h}),
\]
is also a solution of the Cahn-Hilliard equation (\ref{CH}). In particular, taking derivatives of $\Phi_{{\tt h}, \vartheta, \eta}(w_\tau)$ with respect to the parameters we get 
\[
\begin{aligned}
L_{w_\tau} \partial_{h_i} \Phi_{{\tt h}, \vartheta, \eta}(w_\tau)\mid_{{\tt h}, \vartheta, \eta=0}&=0, \quad i=1,2,3,\\
L_{w_\tau} \partial_{\vartheta_i} \Phi_{{\tt h}, \vartheta, \eta}(w_\tau)\mid_{{\tt h}, \vartheta, \eta=0}&=0, \quad i=1,2,\\
L_{w_\tau} \partial_{\eta} \Phi_{{\tt h}, \vartheta, \eta}(w_\tau)\mid_{{\tt h}, \vartheta, \eta=0}&=0, 
\end{aligned}
\]
and hence the $6$ dimensional linear space 
\begin{equation}\label{def kernel}
\mathcal I_{w_\tau}=
\mathrm {span}\, \{\partial_{h_i} \Phi_{{\tt h}, \vartheta, \eta}(w_\tau)\mid_{{\tt h}, \vartheta, \eta=0}, \quad \partial_{\vartheta_i} \Phi_{{\tt h}, \vartheta, \eta}(w_\tau)\mid_{{\tt h}, \vartheta, \eta=0}, \quad \partial_{\eta} \Phi_{{\tt h}, \vartheta, \eta}(w_\tau)\mid_{{\tt h}, \vartheta, \eta=0}\}.
\end{equation}
These are the geometric  Jacobi fields  of $L_{w_\tau}$ introduced in already in the inrtoduction. 
For future use  we state the following lemma:
\begin{lemma}\label{lemma asymptot kernel}
With the above notations the following formulas hold in a tubular neighbourhood $\mathcal N_\delta(\ve)$, $\delta(\ve)=\mathcal O(\ve^{\frac{2}{3}})$:
\begin{equation}
\label{app ker 1}
\begin{aligned}
Y^*_{\ve}\partial_{h_i}\Phi_{{\tt h}, \vartheta, \eta}(w_\tau)\mid_{{\tt h}, \vartheta, \eta=0}&=\ve^{-1} \Phi_\tau^{T, {\tt e}_i}{\tt V}+\mathcal O_{C^{1,\alpha}_{\mu} (D_\tau\times \R)}(1),\\
Y^*_{\ve}\partial_{\vartheta_i} \Phi_{{\tt h}, \vartheta, \eta}(w_\tau)\mid_{{\tt h}, \vartheta, \eta=0}&=\ve^{-1}\Phi_\tau^{R, {\tt e}_i}{\tt V}+\mathcal O_{C^{1,\alpha}_{\mu} (D_\tau\times \R)}(1),\\
Y^*_{\ve}\partial_{\eta} \Phi_{{\tt h}, \vartheta, \eta}(w_\tau)\mid_{{\tt h}, \vartheta, \eta=0}&=\ve^{-1}\Phi_\tau^{D}{\tt V}+\mathcal O_{C^{1,\alpha}_{\mu} (D_\tau\times \R)}(1).\\
\end{aligned}
\end{equation}
\end{lemma}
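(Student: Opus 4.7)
The plan is to compute each of the three parameter derivatives defining the geometric Jacobi fields via the chain rule in stretched Fermi coordinates $(\sy,\st)$, and to isolate the dominant contribution coming from differentiation in the normal direction (which carries the large factor $\ve^{-1}$) from the tangential contributions. The essential input is that, by (\ref{asymp wtau}), $Y^*_\ve w_\tau = U(\st)+\mathcal O_{C^{2,\alpha}_\mu}(\ve^{2-\alpha})$, so in particular $\nabla_\sy(Y^*_\ve w_\tau)=\mathcal O_{C^{1,\alpha}_\mu}(\ve^{2-\alpha})$ on $D_\tau\times\R$, which forces all tangential remainders to be small.

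For the translations $\partial_{h_i}\Phi_{{\tt h},0,0}|_0=\partial_{x_i}w_\tau$, the key observation is that $t=\ve\st$ is the signed distance to $D_\tau$, so $\nabla t({\tt x})=N_\tau(\sy({\tt x}))$ and consequently $\partial_{x_i}\st=\ve^{-1}\,{\tt e}_i\cdot N_\tau(\sy)=\ve^{-1}\Phi_\tau^{T,{\tt e}_i}(\sy)$ \emph{exactly}. The chain rule then gives
\[
Y^*_\ve\partial_{x_i}w_\tau=\ve^{-1}\Phi_\tau^{T,{\tt e}_i}\,{\tt V}+\nabla_\sy(Y^*_\ve w_\tau)\cdot\partial_{x_i}\sy,
\]
where the last term is $\mathcal O_{C^{1,\alpha}_\mu}(\ve^{2-\alpha})$. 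For the rotations one has $\partial_{\vartheta_i}\Phi_{0,\vartheta,0}|_0=K_i\cdot\nabla w_\tau$ with $K_i({\tt x})$ the Killing field (linear in ${\tt x}$) of the corresponding rotation. Substituting ${\tt x}=X_\tau(\sy)+\ve\st N_\tau(\sy)$ into $K_i({\tt x})\cdot N_\tau(\sy)$, and using the explicit form of $K_i$ from Section \ref{sec delaunay}, the $\st$-correction has the skew-symmetric structure $(N_\tau\cdot{\tt e})(N_\tau\cdot{\tt e}_3)-(N_\tau\cdot{\tt e}_3)(N_\tau\cdot{\tt e})=0$ and therefore vanishes identically, yielding $K_i({\tt x})\cdot N_\tau(\sy)=\Phi_\tau^{R,{\tt e}_i}(\sy)$ exactly. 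The same chain-rule computation then produces the second formula with a tangential remainder of order $\mathcal O(\ve^{2-\alpha})$.

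For the Delaunay parameter, write $G(\tau,\sy,\st)=Y^*_\ve w_\tau$ and apply the chain rule at fixed ${\tt x}$,
\[
\partial_\tau w_\tau=\partial_\tau G+\nabla_\sy G\cdot\partial_\tau\sy+(\partial_\st G)(\partial_\tau\st).
\]
Differentiating the identity ${\tt x}=X_\tau(\sy)+\ve\st N_\tau(\sy)$ in $\tau$ at fixed ${\tt x}$ and projecting on $N_\tau$, the contributions involving $\partial_\sy X_\tau\cdot\partial_\tau\sy$, $\partial_\tau N_\tau$ and $\partial_\sy N_\tau\cdot\partial_\tau\sy$ all vanish by orthogonality to $N_\tau$, which reproduces (and in fact extends off $\st=0$) the identity $\partial_\tau\st=\pm\ve^{-1}\Phi_\tau^D$ quoted in the paragraph preceding Section \ref{sec 222}. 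Consequently the third term is $\pm\ve^{-1}\Phi_\tau^D\,{\tt V}$, while $\nabla_\sy G\cdot\partial_\tau\sy=\mathcal O(\ve^{2-\alpha})$ and $\partial_\tau G=\partial_\tau U(\st)+\mathcal O(\ve^{2-\alpha})=\mathcal O_{C^{1,\alpha}_\mu}(1)$, using smoothness of $U$ in $\tau$ through $H_{D_\tau}$ and $\ell_\ve$ together with the differentiability in $\tau$ of the remainder in (\ref{asymp wtau}) granted by Theorem \ref{teorema 1}. Summing the three contributions gives the third formula.

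The main technical point, and the one hardest to pin down, is the bookkeeping of the remainder terms in the weighted Hölder norm $C^{1,\alpha}_\mu(D_\tau\times\R)$. On the tubular neighbourhood $\mathcal N_{\delta(\ve)}$ of normal extent $\mathcal O(\ve^{2/3})$ the derivatives $\partial_{x_i}\sy$, $\partial_\tau\sy$ of the Fermi map are uniformly bounded, so polynomial factors in $\st$ that may appear in any of the error terms are absorbed by the exponential decay of ${\tt V}$ inherited from $U'$ via (\ref{vasymp}) and by the weight $(\cosh\st)^\mu$. Any growth along the axis of $D_\tau$ appearing in the remainders (typically linear, coming from the tangential component of $K_i$) is of the same order as the linear growth of $\Phi_\tau^{R,{\tt e}_i}$ and $\Phi_\tau^D$ on the right-hand side, so no mismatch of growth rates arises between the principal term and the $\mathcal O(1)$ remainder.
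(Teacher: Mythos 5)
Your proof takes essentially the same route as the paper: differentiate the Fermi-coordinate relation ${\tt x}=X_\tau+\ve\st N_\tau$ in the relevant parameter, project on $N_\tau$ to extract $\partial_\bullet\st=\ve^{-1}\Phi^\bullet_\tau$, and apply the chain rule to $Y^*_\ve w_\tau=U(\st)+\mathcal O(\ve^{2-\alpha})$, noting the tangential derivatives are small. The paper only writes out the translation case and says the other two are similar; your explicit treatment of the rotation case (vanishing of the $\ve\st$-correction to $K_i\cdot N_\tau$ by skew symmetry) and the Delaunay parameter are valid elaborations of the same argument.
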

\begin{proof}
We recall that by  (\ref{asymp wtau}) in $\mathcal N_{\delta(\ve)}$ we have 
\begin{equation}
\label{app ker 2}
Y^*_{\ve}w_\tau (\sy, \st)=U(\st)+\mathcal O_{\mathcal C^{2,\alpha}_{\mu} (D_\tau\times \R)}(\ve^{2-\alpha}).
\end{equation}
In $\mathcal N_{\delta(\ve)}$ we can write explicitly using the isothermal coordinates on $D_\tau$:
\begin{equation}
{\tt x}= X_\tau (s,\theta)+\ve \st  N_\tau(s, \theta).
\label{app ker 3}
\end{equation}
Now, fix  a unit vector ${\tt e}\in \R^3$ and denote ${\tt x}_h={\tt x}+h{\tt e}$. Taking derivative in $h$ of (\ref{app ker 3}) and evaluating at $h=0$ we get:
\[
{\tt e}=\ve \partial_{\tt e} \st N_\tau+\partial_{\tt e} s[\partial_s X_\tau+\ve \st \partial_s N_\tau]+\partial_{\tt e} \theta[\partial_\theta X_\tau+\ve \st \partial_\theta N_\tau].
\]
Taking the scalar product with $N_\tau$, $\partial_s X_\tau$ and $\partial_\theta X_\tau$ we find expression for $\partial_{\tt e}\st $, $\partial_{\tt e} s$, $\partial_{\tt e} \theta$. Note in particular that $\partial_{{\tt e}_i}\st=\ve^{-1}{\tt e}_i \cdot N_\tau=\ve^{-1} \Phi_\tau^{T, {\tt e}_i}$. Then, taking derivatives $\partial_{\tt e_i}$  of  (\ref{app ker 2})  we get the first formula in (\ref{app ker 1}).  We follow a similar argument to show the two remaining identities. 
\end{proof}

\section{Proof of Theorem \ref{main theorem}}\label{section 3}

\subsection{A functional analytic setting for $L_{w_\tau}$}

We will introduce suitable weighted Sobolev norms to study the invertibility theory for $L_{w_\tau}$. Let $\mathrm{dist}\, ({\tt x}, D_\tau)$ denote the signed distance function, where we chose the orientation of $D_\tau$ in such a way that the sign of $\mathrm{dist}\, ({\tt x}, D_\tau)$ agrees with that of $\rho_\tau(z)-r$. We have globally
\[
|\mathrm{dist}\, ({\tt x}, D_\tau)|\leq |\rho_\tau(z)-r|,
\]
and the two quantities are comparable near $D_\tau$. Recall that above we have denoted ${\st}=\frac{1}{\ve}\mathrm{dist}\, ({\tt x}, D_\tau)$ as long as $|\mathrm{dist}\, ({\tt x}, D_\tau)|\leq {\delta}$.

We will  define the weighted Sobolev norms  we will use in the sequel. First, let us consider Sobolev spaces $L^2(D_\tau\times \R)$ and $H^s(D_\tau\times \R)$. Since  functions in these spaces can be expressed in terms of the isothermal coordinates $(s,\theta)$ and the Fermi coordinate $\st$  we  define
\[
L^2_{a,\gamma}(D_\tau\times \R)=\cosh^{-a}(s)\cosh^{-\gamma}(\st)L^2(D_\tau\times \R), \quad H^s_{a,\gamma}(D_\tau\times \R)=\cosh^{-a}(s)\cosh^{-\gamma}(\st)H^s(D_\tau\times \R).
\]

Second, let us consider the subspace $H^\ell(\R^3)_+$ (respectively $H^\ell(\R^3)_-$) of $H^\ell(\R^3)$  which consists  functions supported in the set $\{z\geq -1\}$ (respectively in $\{z\leq 1\}$). We define  weighted Sobolev norms in these subspaces  as follows 
\begin{equation}\label{metal urbain}
\begin{aligned}
\|u\|_{H^\ell_{a, \gamma}(\R^3)_+}&=\sum_{|\alpha|=0}^\ell  \|e^{\, a z} e^{\,\gamma\big(\frac{r-\rho_\tau(z)}{\ve}\big)}D^\alpha u\|_{L^2(\R^3)_+}, \\
\|u\|_{H^\ell_{a, \gamma}(\R^3)_-}&=\sum_{|\alpha|=0}^\ell  \|e^{\, -a z} e^{\,\gamma \big(\frac{r-\rho_\tau(z)}{\ve}\big)}D^\alpha u\|_{L^2(\R^3)_-},
\end{aligned}
\end{equation}
where $\alpha=(\alpha_1, \alpha_2, \alpha_3)$ is a multi index and derivatives are taken with respect to $x_j$, $j=1,2,$ and $z$ (which we will identify with $x_3$ when convenient).  Note that $\gamma$ measures the rate of decay or growth of the solution in the transversal direction  to $D_\tau$ and $a$ measures the rate of decay or growth along the  axis of $D_\tau$ in the positive (respectively negative) direction.  Finally, we define 
\[
H^\ell_{a, \gamma}(\R^3)=H^\ell_{a, \gamma}(\R^3)_+\oplus H^\ell_{a, \gamma}(\R^3)_-.
\]
With these definitions when $\gamma>0$, $a>0$ our spaces consist of exponentially decaying functions, in the opposite case they are exponentially increasing. Combinations of signs for $\gamma$ and $a$ are of course allowed. 

The norms $H^s_{a,\gamma}(D_\tau\times I_{\delta/\ve})$, where $I_{\delta/\ve}=(-\delta/\ve, \delta/\ve)$,  and $H^s_{a, \gamma}(\R^3\cap \{|\mathrm{dist}\, ({\tt x}, D_\tau)| <\delta\})$ are equivalent in the following sense:
\begin{equation}
\label{nick cave 1}
\begin{aligned}
\|\phi\|_{L^2_{a,\gamma}(\R^3\cap \{|\mathrm{dist}\, ({\tt x}, D_\tau)| <\delta\})}&\leq C\ve^{1/2}\|Y^*_{\ve} \phi\|_{L^2_{a^*, \gamma^*}(D_\tau\times I_{\delta/\ve})},\\
\|\phi\|_{L^2_{a,\gamma}(\R^3\cap \{|\mathrm{dist}\, ({\tt x}, D_\tau)| <\delta\})}&\geq C\ve^{1/2}\|Y^*_{\ve} \phi\|_{L^2_{a_*, \gamma_*}(D_\tau\times I_{\delta/\ve})},
\end{aligned}
\end{equation}
where in general constants $a, \gamma$, $a^*, \gamma^*$ and $a_*, \gamma_*$ are different. In addition, relating the norms of gradients and second derivatives we expect to loose powers of $\ve$. For instance
\begin{equation}
\label{nick cave 2}
\begin{aligned}
\|\nabla\phi\|_{L^2_{a,\gamma}(\R^3\cap \{|\mathrm{dist}\, ({\tt x}, D_\tau)| <\delta\})}&\leq C\ve^{-1/2}\|\nabla Y^*_{\ve} \phi\|_{L^2_{a^*, \gamma^*}(D_\tau\times I_{\delta/\ve})},
\\
\|\nabla \phi\|_{L^2_{a,\gamma}(\R^3\cap \{|\mathrm{dist}\, ({\tt x}, D_\tau)| <\delta\})}&\geq C\ve^{1/2}\|\nabla Y^*_{\ve} \phi\|_{L^2_{a_*, \gamma_*}(D_\tau\times I_{\delta/\ve})}.
\end{aligned}
\end{equation}
Similar estimates hold for the second derivatives. We will use these estimates  later on.

\subsection{The Fourier-Laplace transform of $L_{w_\tau}$}

We will consider the  linear operator $L_{w_\tau}$ acting on the space $L^2_{a,\gamma}{(\R^3)}$ with dense domain $D(L_{w_\tau})=H^2_{a,\gamma}{(\R^3)}$ defined by
\[
\begin{aligned}
L_{w_\tau}\colon H^2_{a,\gamma}{(\R^3)}&\longmapsto L^2_{a,\gamma}{(\R^3)},\\
u&\longmapsto L_{w_\tau} u.
\end{aligned}
\]
The important property of the operator $L_{w_\tau}$ is the fact that it is periodic in $z$.  
This will allow us to  define  the Fourier-Laplace transform of $L_{w_\tau}$ (this idea was originated  by Taubes \cite{taubes1987}, \cite{MR755237} and developed in the form that we adopt here in  \cite{MR1356375} and \cite{MR1941630}).

To begin we define the   Fourier-Laplace transform for functions on $\R$ by:
\begin{equation}\label{fl def}
\hat h (\sigma, \zeta)=\mathcal F(h)=\sum_{-\infty<k<\infty} e^{\,-i(k+\sigma)\zeta} h(\sigma+k),\quad \sigma\in [0, 1), \quad \zeta=\mu+i\nu.
\end{equation}
Observe that with this definition we have
\begin{equation}
\label{fl quasi period}
\hat h(\sigma+1,\zeta)= \sum_{-\infty<k<\infty} e^{\,-i(k+1+\sigma)\zeta} h(\sigma+k+1)= \hat h(\sigma, \zeta).
\end{equation}
Note that the definition we adopt here is slightly different from the one in \cite{MR1356375}-the two differ by a factor $e^{\,-i\sigma\zeta}$.

The Fourier-Lapalace transform can be inverted and the inverse is given by an explicit formula. To state it let  $s\in \R$ be given and denote  the  fractional part of $s$ by   $s\modulo 1$. With this notation we have
\begin{equation}
\label{fl 1}
h(s)=\mathcal  F^{-1}(\hat h)(s)=\frac{1}{2\pi} \int_{\mu=0}^{2\pi} e^{\, i s \zeta} \hat h(s\modulo 1, \zeta)\,d\mu,
\end{equation}
where we integrate  along the line $\mathrm{Im}\, \zeta=\nu$, $\zeta=\mu+i\nu$ (see \cite{taubes1987}). The Fourier-Laplace transform is well defined in the Schwarz class $\mathcal S$ and, by Cauchy's theorem, the value of the integral in the inversion formula does not depend on $\nu$, since the segment along which we integrate can be vertically shifted. However, for our purpose it is convenient to consider the class of functions which are allowed to grow exponentially at $+\infty$ (or at $-\infty$). Suppose for instance that $h$ is a continuous function,   supported in $[-1,\infty)$ and such that $|e^{\, a s} h(s)|<\infty$.    Then the series in (\ref{fl def}) is well defined as long as $\mathrm{Im}\, \zeta=\nu<a$.  
Likewise, we can define the transform on  a subspace:
\[
H^\ell_{a}(\R)_+= e^{\,-as} H^\ell(\R),
\]
of the  Sobolev space $H^\ell(\R)$ consisting of functions supported in $[-1,\infty)$, 
where $a$ is the rate of exponential decay  or growth. In a similar way we define the subspace $H^\ell_{a}(\R)_-$ of $H^\ell(\R)$ consisting of exponentially decaying or growing functions supported in $(-\infty, 1]$. As long as  $\mathrm{Im}\, \zeta=\nu\leq a$ the Fourier-Laplace transform of $h\in H^\ell_{a}(\R)_+$ is well defined. Moreover the function $h$ can be recovered from $\hat h(\cdot, \zeta)$ if the path of integration in the formula (\ref{fl 1}) is taken in the lower half plane $\mathbb H^-_{a}=\{\mathrm{Im}\, (\zeta)=\nu \leq a\}$. 
The situation is similar when instead we consider the Fourier-Laplace transform in the space of functions $H^\ell_{a}(\R)_-$, except that now the transform is defined in the upper half plane $\mathbb H_a^+=\{\mathrm {Im}\, \zeta=\nu\geq a\}$.

We observe that from  Plancherel's formula: 
\begin{equation}
\label{fl plancherel}
\int_{\mu=0}^{2\pi}\int_0^1 |\hat h(\sigma,\zeta)|^2\, d\sigma\,d\mu=\int_\R |e^{\,\nu s} h(s)|^2\, ds, \qquad \zeta=\mu+i\nu,
\end{equation}
it follows that $L^2$ norms of the Fourier-Laplace transforms are equal to the exponentially weighted $L^2$ norm of functions. This property is crucial for our purpose. 

Note that if $u(\sigma, \zeta)$ is an $L^2([0,1])$ function which  is analytic as a function of $\zeta$ with values in $L^2([0,1])$ in the lower half plane $\mathbb H^-_{a}$ then, by Cauchy's theorem, the path of integration in the inversion formula (\ref{fl 1}) can be shifted down to any path $\zeta=\mu+i\nu$, $\nu<a$. If in addition $u(\cdot,\zeta)$ is bounded by $e^{\,-\nu}$  along such paths then  the inverse transform $\mathcal F^{-1} u(s)$ is supported in $[-1, \infty)$. This explains the reason we have paid so much  attention to functions defined on a half-line. On the other hand Fourier-Laplace transforms of functions in $H^\ell_{a}(\R)_+$ have the property described above.

The Fourier-Laplace transform plays a similar role as the Fourier transform in the theory of linear PDEs with constant coefficients when the differential operator at hand is periodic with respect  to  the independent variable.  To fix attention on a concrete example  let us suppose  that $A(s)\colon L_a^2(\R)_+\to L^2_a(\R)_+$, $s\in \R$ is a family of  densely defined, linear  operators. Then it is natural to define 
\begin{equation}
\label{fl transf oper}
\hat A(\sigma, \zeta)\hat h=\widehat{A(s)h}(\sigma,\zeta).
\end{equation}
Now, let us suppose that $A$ is periodic with period $1$, i.e.  $A(s)=A(s+1)$. We have
\[
\hat A(\sigma, \zeta)\hat h=\sum_{-\infty<k<\infty} e^{\, -i(k+\sigma)\zeta} A(\sigma + k)h(\sigma+k)=e^{\,-i\zeta\sigma}A(\sigma) e^{\,i\zeta\sigma} \hat h,
\]
hence  explicitly 
\begin{equation}
\label{fl transf oper expli}
\hat A(\sigma, \zeta)=e^{\,-i\zeta\sigma}A(\sigma) e^{\,i\zeta\sigma}.
\end{equation}
With our definition of the Fourier-Laplace transform we have $\hat h(\sigma)=\hat h(\sigma +1)$ and also $\hat A(\sigma, \zeta)=\hat A(\sigma+1, \zeta)$.
It follows  that the operator $\hat A(\sigma, \zeta)$ is naturally defined on functions in the  space of $L^2$ functions defined on $S^1$.  Through the identification $u(\sigma)= \tilde u(e^{\,2\pi i\sigma})$ we  consider this as a space of periodic functions on $[0,1]$ and denote it by  $L^2_{per}([0,1])$.  

Often one has to deal with operators that  are periodic with period $T>0$ that is not necessarily equal to $1$. It is elementary to modify our definitions of the Fourier-Laplace transform of a function and a linear operator in this case. For a given function $h$ and $T>0$ our objective is to obtain define the Fourier-Lapalace transform of $h$ which is periodic of period  $T$. We set $h_T(x)=h(Tx)$ and let naturally $\hat h(\xi, \zeta)=\hat h_T(\xi/T,\zeta)$ so that 
\[
\hat h(\xi, \zeta)=\sum_{-\infty<k<\infty} e^{\,-i(\xi +Tk)\zeta/T} h(\xi +Tk), \qquad  h(x)=\frac{1}{2\pi} \int_{\mu=0}^{2\pi} e^{\, i x \zeta/T} \hat h(x\modulo T, \zeta/T)\,d\mu,
\]
the Plancherel's formula is
\[
\int_{\mu=0}^{2\pi}\int_0^T |\hat h(\sigma,\zeta)|^2\, d\sigma\,d\mu=\int_\R |e^{\,\nu s/T} h(s)|^2\, ds, \qquad \zeta=\mu+i\nu,
\]
and the Fourier-Laplace transform of a $T$ periodic operator $A$ is
\[
\hat A(\xi,\zeta)=e^{\,-i\xi\zeta/T} A(\xi) e^{\,i\xi\zeta/T}.
\]
The operator $\hat A(\xi, \zeta)$ acts on a space of functions $L^2_{per}([0,T])$. Note that from the Plancherel's formula we see that if $h\in L_a^2(\R)_+$ and its Fourier-Laplace transform is $T$ periodic then it is natural to take $\zeta=\mu+i\nu=\mu+iTa$, $\mu\in (0, 2\pi)$ as the path of integration. 

In many applications, and this will be in particular  the case in our context, the family of operators $\hat A(\sigma, \zeta)$ is Fredholm and depends holomorphically on the variable $\zeta$. If this is the case one can use the analytic Fredholm theorem to conclude that either $\hat A(\sigma, \zeta)$ is nowhere  invertible or it is invertible in the set of all admissible $\zeta$ except possibly a discrete set.  If the latter happens then in order to solve the equation 
\[
A(x) h=g, 
\]
we can pass to  the Fourier-Laplace transform 
\begin{equation}
\label{fl inverse operator}
\hat A(\xi, \zeta)\hat h(\xi, \zeta)=\hat g(\xi, \zeta)\Longrightarrow h(x)=\frac{1}{2\pi} \int_{\mu=0}^{2\pi} e^{\, i x \zeta/T} (\hat A^{-1} \hat g) (x\modulo T, \zeta/T)\,d\mu,
\end{equation}
where in the last integral the path of integration should avoid the poles of $\hat A^{-1} (x\modulo T, \zeta)$. If between two such paths there is no pole of $\hat A^{-1} (x\modulo T, \zeta)$ then the path of integration can be shifted from one of the paths to the other horizontally  without changing the value of the integral. This follows by Cauchy's theorem, since the integrals over the vertical segments cancel out due to (\ref{fl quasi period}).  This means for instance that we can get the inverse of $A(x)$ in a space of functions $L^2_{a}(\R)_+$  whenever $\hat A^{-1}(\xi, \zeta)$ is analytic   in  some neighbourhood of the segment $\zeta=\mu+iT a$, $\mu\in [0, 2\pi]$.  Alternatively, this means that $\hat A^{-1} (\xi, \zeta)$ is well defined in the space $L^2_{per}([0,T])$, for $\zeta=\mu+iT\nu$, $|\nu-a|<\kappa$ with some $\kappa>0$. It may however happen that $\hat A^{-1} (\xi, \zeta)$ is analytic along two paths   $\zeta_j=\mu+iT\nu_j$, $j=1,2$, $\mu\in [0,2\pi]$ and $\nu_1<\nu_2$, but it has a pole at some $\zeta^*=\mu^*+i T\nu^*$, with $\nu_1<\nu^*<\nu_2$, $\mu^*\in (0,2\pi)$. In this case formula (\ref{fl inverse operator}) would give two solutions $h_1$ and $h_2$ (by integrating over the paths $\zeta=\mu+i T\nu_j$, $j=1,2$) which would differ by an element of the kernel of $A(x)$. This corresponds to the residue of $\hat A^{-1} (\xi, \zeta)$, $\zeta^* =\mu^*+iT\nu^*$.

\subsection{Mapping properties  of $L_{w_\tau}$ in weighted Sobolev spaces}

Going back to our context, we see that since $L_{w_\tau}$ is $T_\tau$ periodic in the $z$ variable, and so is  it induces a family of  operators  on $L^2_{\gamma, per}(\R^2\times [0,T_\tau])$, 
which is densely  defined and holomorphic, as a function of $\zeta$,  in a neighbourhood of the segment $[0,2\pi]$. Here and below $H^\ell_{\gamma,per}(\R^2\times [0,T_\tau])$ is a subspace of $H^\ell(\R^2\times [0,T_\tau])$ which consists of functions that are periodic in $z$ and whose grow (decay) away from $D_\tau$ is controlled by $e^{\, -\gamma\big(\frac{r-\rho_\tau(z)}{\ve}\big)}$, cf.  (\ref{metal urbain}). Later on we will also consider the space of functions $H^\ell_{\gamma,per}(\pD\times \R)$ consisting of functions defined on $\pD$ (here by $\pD$ we denote  a one period portion  of $D_\tau$ with the top and the bottom identified) and whose decay away from $\pD$ is controlled by $e^{\,-\gamma \st}$. These two norms are related locally, near $\pD$, by formulas analogous to (\ref{nick cave 1})--(\ref{nick cave 2}).

 If we restrict $L_{w_\tau}$  to the subspace of $L^2_{a,\gamma}{(\R^3)_+}$ of $L^2_{a, \gamma}(\R^3)$ functions  that  are supported in the set $z>-1$, and consider it  as acting on Fourier-Laplace transforms of such functions, then we can obtain a parametrix for the operator $L_{w_\tau}$ via the Fourier-Laplace inversion formula (\ref{fl inverse operator}).  As we pointed out earlier  the advantage in working with the family $\hat {L}_{w_\tau}(\zeta)$, is the fact that we can use the theory developed in  \cite{MR837256} and \cite{pacard_lecture}. 
 
Using the  Fourier-Laplace transform  we can consider the family of operators $\hat{L}_{w_\tau} (\zeta)$ instead of ${L}_{w_\tau}$. We will write the operator $\hat{L}_{w_\tau}(\zeta)$ in terms of variables $(x_1, x_2,  \xi)$ (here $\xi\in [0,T_\tau]$): 
\[
\begin{aligned}
\hat L_{w_\tau}(\zeta) &=\ve [\Delta+T_\tau^{-2}(\partial_{\xi\xi}+2 i\zeta \partial_{\xi}-\zeta^2)]+\frac{1}{\ve} f'(w_\tau)
\end{aligned}
\]
This operator is defined for functions in $H^2_{\gamma, per}(\R^2\times [0,T_\tau])$ and induces a densely defined operator on $L^2_{\gamma, per}(\R^2\times [0,T_\tau])$.  In order that the inversion formula for the Fourier-Laplace transform made sense we need to know the Fredholm property at least for $\zeta=\mu+i\nu$, where $\mu\in [0,2\pi]$ and $|\nu|$ is small, or in other words when $\zeta$ is in a neighbourhood of the segment $[0,2\pi]$. 
In order to prove that this operator is Fredholm we use the following:
\begin{lemma}\label{concentration lemma}
Let $A_R=\{({\tt x}, \xi)\mid r-\rho_\tau(\xi)\in (-R,R), r=|{\tt x}|=\sqrt{x_1^2+x_2^2}, \xi\in [0,T_\tau]\}$  and let $M>0$ be such that $f'(w_\tau)<-\frac{\sqrt{2}}{2}$ in $\R^2\times [0,T_\tau]\setminus A_{\ve M}$. There exists $\delta_\tau>0$ such that for all $\zeta=\mu+ia$, $\mu\in[0,2\pi]$, and $\gamma$ such that  $a^2+\gamma^2<\delta_\tau$,   and  all sufficiently small $\ve$,    it holds
\begin{equation}
\ve \|\nabla \phi\|^2_{L^2_{\gamma, per}(\R^2\times [0,T_\tau])}+\ve^{-1} \|\phi\|^2_{L^2_{\gamma, per}(\R^2\times [0,T_\tau])}\leq C\|\hat L_{w_\tau}(\zeta) \phi\|^2_{L^2_{\gamma, per}(\R^2\times [0,T_\tau])}+C\ve^{-1}\|\phi\|^2_{L^2(A_{\ve M})}.
\label{conc 1}
\end{equation}
for any function $\phi\in H^2_{\gamma, per}(\R^2\times [0,T_\tau])$. The constant $C$ above depends on $\zeta, M$ and $\gamma$. 
\end{lemma}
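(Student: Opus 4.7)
The plan is to prove \eqref{conc 1} by a weighted G\aa rding/energy argument, after first removing the exponential weight via a similarity transformation. I set
\[
\psi = e^{\,\gamma(r-\rho_\tau(\xi))/\ve}\phi,
\]
so that $\|\psi\|_{L^2_{per}(\R^2\times[0,T_\tau])} = \|\phi\|_{L^2_{\gamma, per}(\R^2\times[0,T_\tau])}$, and consider the conjugated operator $\mathbb P(\zeta) = e^{\,\gamma(r-\rho_\tau)/\ve}\,\hat L_{w_\tau}(\zeta)\, e^{\,-\gamma(r-\rho_\tau)/\ve}$. A direct computation using $e^{\,f}\partial_j e^{\,-f} = \partial_j - \partial_j f$ shows that $\mathbb P(\zeta)$ equals $\hat L_{w_\tau}(\zeta)$ plus correction terms whose coefficients are $\mathcal O(\gamma)$, $\mathcal O(\gamma^2/\ve)$, $\mathcal O(\gamma/\ve)$, and $\mathcal O(\zeta\gamma/\ve)$. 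The key observation is that the bounded quantities $|\nabla_x(r-\rho_\tau)|$, $|\Delta_x(r-\rho_\tau)|$, $|\rho_\tau'|$, $|\rho_\tau''|$ keep all of these corrections under control.

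Next I pair $\mathbb P(\zeta)\psi$ with $\bar\psi$ in $L^2_{per}$ and take the real part. Integration by parts (using periodicity in $\xi$ to eliminate boundary terms) yields the identity
\[
-\mathrm{Re}\int \mathbb P(\zeta)\psi\,\bar\psi\,dx_1 dx_2 d\xi \;=\; \ve\|\nabla\psi\|^2 + \frac{\ve}{T_\tau^2}\|\partial_\xi\psi\|^2 - \frac{1}{\ve}\int f'(w_\tau)|\psi|^2 + \mathcal E(\zeta,\gamma;\psi),
\]
where $\mathcal E$ collects the contributions from the terms $2i\zeta\partial_\xi$, $-\zeta^2$, and the weight corrections. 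Each summand in $\mathcal E$ is estimated by Cauchy--Schwarz, producing a bound of the form $\tfrac{\ve}{4}\|\nabla\psi\|^2 + C(a^2+\gamma^2)\ve^{-1}\|\psi\|^2 + C\ve\|\psi\|^2$; note in particular that the imaginary cross-term is purely imaginary up to $\mathrm{Re}(\zeta)$ (since $\int \partial_\xi\psi\bar\psi$ is imaginary by periodicity) and hence controllable by $|\zeta|\|\partial_\xi\psi\|\|\psi\|$. Invoking the hypothesis $f'(w_\tau)<-\tfrac{\sqrt 2}{2}$ on the complement of $A_{\ve M}$ gives
\[
-\frac{1}{\ve}\int f'(w_\tau)|\psi|^2 \;\geq\; \frac{\sqrt 2}{2\ve}\|\psi\|^2 - \frac{C}{\ve}\|\psi\|^2_{L^2(A_{\ve M})}.
\]

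Combining everything and choosing $\delta_\tau$ so small that $C(a^2+\gamma^2)<\tfrac{\sqrt 2}{4}$, I obtain
\[
|\langle\mathbb P(\zeta)\psi,\psi\rangle| \;\geq\; \tfrac{\ve}{2}\|\nabla\psi\|^2 + \tfrac{c}{\ve}\|\psi\|^2 - \tfrac{C}{\ve}\|\psi\|^2_{L^2(A_{\ve M})}
\]
for some $c>0$ depending on $\gamma$, $\zeta$. A further application of Cauchy--Schwarz in the form $|\langle\mathbb P(\zeta)\psi,\psi\rangle|\leq C_1\|\mathbb P(\zeta)\psi\|^2 + \tfrac{c}{2\ve}\|\psi\|^2$ (with $C_1$ chosen so that the second term can be absorbed) isolates the desired inequality
\[
\ve\|\nabla\psi\|^2 + \ve^{-1}\|\psi\|^2 \;\leq\; C\|\mathbb P(\zeta)\psi\|^2 + C\ve^{-1}\|\psi\|^2_{L^2(A_{\ve M})},
\]
and undoing the conjugation gives \eqref{conc 1}.

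The main obstacle is the term $(\gamma/\ve)^2|\nabla(r-\rho_\tau)|^2\psi$ produced by the similarity transformation: it is of the \emph{same} order $\ve^{-1}$ as the coercive contribution $-\ve^{-1}f'(w_\tau)$, so the smallness condition $a^2+\gamma^2<\delta_\tau$ is essential for this perturbation not to destroy positivity. By contrast, the $\xi$-derivative perturbations $\frac{\ve}{T_\tau^2}(2i\zeta\partial_\xi-\zeta^2)$ are harmless: being multiplied by $\ve$ rather than $\ve^{-1}$, they contribute only lower-order quantities easily absorbed by the $\ve\|\nabla\psi\|^2$ and $\ve^{-1}\|\psi\|^2$ terms on the left.
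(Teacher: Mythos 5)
Your proof is correct and takes essentially the same route as the paper: a G{\aa}rding-type energy estimate exploiting the coercivity of $-f'(w_\tau)$ outside $A_{\ve M}$, with the smallness of $a^2+\gamma^2$ used to absorb the $\mathcal O(\gamma^2/\ve)$ zeroth-order weight correction, which you correctly identify as the one competing with the coercive term. The only organizational difference is that you conjugate away the exponential weight and split the potential integral over $A_{\ve M}$ and its complement directly, whereas the paper multiplies the equation by $\bar\phi\,\Gamma\chi^2_{\ve M}$ with a cutoff $\chi_{\ve M}$ transitioning on $A_{\ve M}\setminus A_{\ve M/2}$; also, a small bookkeeping slip: once the $\ve$-prefactor in $\hat L_{w_\tau}(\zeta)$ is included, the corrections you label $\mathcal O(\gamma/\ve)$ and $\mathcal O(\zeta\gamma/\ve)$ are in fact $\mathcal O(\gamma)$ and $\mathcal O(\zeta\gamma)$, which only makes the absorption step easier than you stated.
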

\begin{proof}[Proof of Lemma \ref{concentration lemma}]
This type of estimate is well known and it can be found for instance in \cite{10.2307/j.ctt13x1d8z}. We will outline the proof here (following the proof of a similar result in \cite{dkp-2009}). We agree that  $\Gamma$ is one of the functions
\[
\Gamma=e^{\, \gamma\big(\frac{r-\rho_\tau(\xi)}{\ve}\big)}, \qquad \Gamma=\cosh^\gamma\big(\frac{r-\rho_\tau(\xi)}{\ve}\big).
\]
We take a cutoff function $\chi_{\ve M}$ which is supported in the complement of the set $A_{\ve M/2}$ and is identically equal to $1$ in the complement of the set $A_{\ve M}$. Let us denote 
\[
\phi_\zeta=e^{\,i\zeta\xi/T_\tau}\phi,
\]
so that 
\[
\hat L_{w_\tau}\phi=e^{-\,i\zeta\xi/T_\tau}[\ve\Delta+\frac{1}{\ve} f'(w_\tau)]\phi_\zeta=g.
\]
Multiply the left hand side of the last  equation by  $\bar{\phi}\Gamma\chi^2_{\ve M}$ and integrate by parts.  This gives
\[
\begin{aligned}
\int_{\R^2\times [0,T_\tau]} \hat L_{w_\tau}(\zeta)  \phi \bar{\phi}\Gamma\chi^2_{\ve M}&=-\ve \int_{\R^2\times [0,T_\tau]} [|\nabla \phi_\zeta|^2 +\zeta^2|\phi_\zeta|^2]\Gamma\chi^2_{\ve M}+\frac{1}{\ve}\int_{\R^2\times [0,T_\tau]} f'(w_\tau)|\phi_\zeta|^2\Gamma\chi^2_{\ve M}\\
&\qquad -\ve \int_{\R^2\times [0,T_\tau]} \nabla  \phi_\zeta \bar{\phi}_\zeta\nabla (\Gamma\chi^2_{\ve M})
\end{aligned}
\]
Young's inequality gives for example
\[
\ve|\nabla \phi_\zeta \cdot  \nabla \Gamma\bar{\phi}_\zeta|\leq \ve \kappa  |\nabla\phi_\zeta|^2 \Gamma +\frac{\ve}{4\kappa} |\phi_\zeta|^2 \frac{|\nabla \Gamma|^2}{\Gamma}
\leq \ve \kappa |\nabla\phi_\zeta|^2 \Gamma +\frac{C\gamma}{4\ve\kappa} |\phi_\zeta|^2 \Gamma.
\]
Combining similar manipulations and adjusting the constants in the Young's inequality and the exponent $\gamma$ suitably we find 
\begin{equation}\label{decay estimate}
\ve \int_{\R^2\times [0,T_\tau]} |\nabla\phi_\zeta|^2\Gamma\chi_{\ve M}^2+\frac{1}{\ve}\int_{\R^2\times [0,T_\tau]} |\phi_\zeta|^2 \Gamma\chi_{\ve M}^2\leq C\int_{\R^2\times [0,T_\tau]} |g|^2\Gamma\chi_{\ve M}^2+C\ve \int_{\R^2\times [0,T_\tau]} |\phi_\zeta|^2|\nabla\chi_{\ve M}|^2.
\end{equation}
As $|\nabla\chi_{\ve M}|=\mathcal O(\ve^{-1})$ and
\[
|\phi_\zeta|=e^{\,\xi\mathrm{Im}\zeta/T_\tau}|\phi|, \quad |\nabla \phi|=|\nabla (e^{-\,i\zeta\xi/T_\tau}\phi_\zeta)|\leq e^{\,|\xi\mathrm{Im}\zeta|/T_\tau}|\nabla\phi_\zeta|+\frac{|\zeta|}{T_\tau}e^{\,|\xi\mathrm{Im}\zeta|/T_\tau}|\phi_\zeta|,
\]
the Lemma follows from this. 
\end{proof}

\begin{remark}\label{rem concentration estimate}
Estimate (\ref{decay estimate}) is of separate interest and it and its variants will be used  for instance when we analyse the operator ${L}_{w_\tau}$ below.  In particular we will need such a variant   in the proof  Lemma \ref{decay in r} (to follow).  
To explain this let us suppose that the weight function  $\Gamma$ depends on $z$ as well, say $\Gamma=(\cosh z)^a e^{\,\gamma\big(\frac{r-\rho_\tau(z)}{\ve}\big)}$ and consider the problem
\[
L_{w_\tau} \phi = g,
\]
where $\phi, g\in L^2_{a, \gamma}(\R^3)$. Choosing the cutoff function $\chi_{\ve M}$ as above (understood now as a function on $\R^3$) and multiplying by $\phi\Gamma\chi_{\ve M}^2$ we see that the term we need to control is of the form 
\[
\ve | \nabla \phi \cdot\nabla \Gamma \phi|\leq  \ve \kappa |\nabla\phi_\zeta|^2 \Gamma +\frac{C\gamma}{4\ve\kappa} |\phi_\zeta|^2 \Gamma,
\]
where the last inequality follows since we still have
\[
\frac{|\nabla \Gamma|}{\Gamma}\leq C\ve^{-1}.
\]
As a consequence we get an estimate of the same type as (\ref{decay estimate}) but with integrals taken over the whole space $\R^3$.
\end{remark}

}

{\color{black}{
\begin{lemma}\label{fredholm a}
The operator $\hat{L}_{w_\tau}(\zeta)$ acting on $H^2_{\gamma, per}(\R^2\times [0,T_\tau])$ is Fredholm.
\end{lemma}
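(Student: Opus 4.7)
The plan is to derive the Fredholm property from the concentration inequality already proven in Lemma \ref{concentration lemma}, combined with a compactness argument of Peetre type, and then to obtain the finite codimension of the range by repeating the argument for the adjoint operator.

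First I would fix $\zeta = \mu + i\nu$ in a neighbourhood of $[0,2\pi]$ and $\gamma$ small, so that $\nu^2 + \gamma^2 < \delta_\tau$, and invoke Lemma \ref{concentration lemma} to obtain the basic estimate
\[
\ve \|\nabla \phi\|^2_{L^2_{\gamma, per}(\R^2 \times [0, T_\tau])} + \ve^{-1}\|\phi\|^2_{L^2_{\gamma, per}(\R^2 \times [0, T_\tau])} \leq C \|\hat L_{w_\tau}(\zeta)\phi\|^2_{L^2_{\gamma, per}(\R^2 \times [0, T_\tau])} + C\ve^{-1} \|\phi\|^2_{L^2(A_{\ve M})},
\]
for every $\phi \in H^2_{\gamma, per}(\R^2\times[0,T_\tau])$.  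Since $\hat L_{w_\tau}(\zeta)$ is a second-order uniformly elliptic operator with smooth coefficients, standard interior elliptic regularity, applied in a cover by unit balls and patched using the periodicity in $\xi$, upgrades this to the $H^2$ bound
\[
\|\phi\|_{H^2_{\gamma, per}(\R^2 \times [0, T_\tau])} \leq C\Bigl( \|\hat L_{w_\tau}(\zeta)\phi\|_{L^2_{\gamma, per}(\R^2 \times [0, T_\tau])} + \|\phi\|_{L^2(A_{\ve M})} \Bigr).
\]

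Next, I would invoke compactness. The set $A_{\ve M}$ is bounded in all three directions (the radial variable lies in $(\rho_\tau(\xi) - \ve M, \rho_\tau(\xi) + \ve M)$ with $\rho_\tau$ bounded above and below, and $\xi \in [0, T_\tau]$ periodic), so by Rellich--Kondrachov the inclusion
\[
H^2_{\gamma, per}(\R^2 \times [0, T_\tau]) \hookrightarrow L^2(A_{\ve M})
\]
is compact. A standard application of Peetre's lemma then shows that $\hat L_{w_\tau}(\zeta)$ has closed range and finite-dimensional kernel, that is, it is semi-Fredholm.

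Finally, to promote semi-Fredholmness to Fredholmness I would identify the adjoint. With respect to the unweighted $L^2$ pairing, which realises $L^2_{-\gamma,per}$ as the dual of $L^2_{\gamma,per}$, a short integration by parts in the first-order term $2i\zeta T_\tau^{-2}\partial_\xi$ yields
\[
\hat L_{w_\tau}(\zeta)^{\ast} = \hat L_{w_\tau}(\bar\zeta) \colon H^2_{-\gamma, per} \longrightarrow L^2_{-\gamma, per}.
\]
Since both the condition $\nu^2 + \gamma^2 < \delta_\tau$ and the structure of the concentration estimate are symmetric under $(\gamma,\zeta)\mapsto(-\gamma,\bar\zeta)$, Lemma \ref{concentration lemma} applies equally well to $\hat L_{w_\tau}(\bar\zeta)$ with weight $-\gamma$, and the argument above gives that the adjoint has finite-dimensional kernel. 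Hence $\hat L_{w_\tau}(\zeta)$ has finite-dimensional cokernel, and is Fredholm.

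The main technical point is the weighted elliptic regularity step: one must verify that the weight's logarithmic derivative $|\nabla \Gamma|/\Gamma = \mathcal O(\ve^{-1})$ is absorbed by the $\ve^{-1}$-scaling already present in the estimate, exactly as in the proof of Lemma \ref{concentration lemma}, and that Rellich compactness respects the periodic boundary in $\xi$. Everything else is bookkeeping.
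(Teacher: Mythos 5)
Your proof follows the same strategy as the paper: derive semi-Fredholmness (finite-dimensional kernel and closed range) from Lemma \ref{concentration lemma} together with Rellich compactness on the bounded set $A_{\ve M}$, and then obtain finite codimension of the range via the duality $\hat L_{w_\tau}(\zeta)^* = \hat L_{w_\tau}(\bar\zeta)$ acting on the space with opposite weight $-\gamma$. You simply make explicit the Peetre lemma and the elliptic upgrade to $H^2$ that the paper leaves implicit, so this is the same argument in slightly more detail.
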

\begin{proof}[Proof of Lemma \ref{fredholm a}]
We need to show that $\hat{L}_{w_\tau}(\zeta)$ has  finite dimensional kernel, closed range and that  codimension of the range is also finite. To see that the $\mathrm{dim}\,\mathrm{Ker}\,\hat{L}_{w_\tau}(\zeta)$ is finite we argue by contradiction. Using notation of Lemma \ref{concentration lemma} let 
\[
\mathcal B_1=\left\{\phi\in H^2_{\gamma, per}(\R^2\times [0,T_\tau])\mid  \hat L_{w_\tau}(\zeta) \phi=0, \quad \|\phi\|_{L^2(A_{\ve M})}=1\right\}.
\]
By Lemma \ref{concentration lemma} we know that  set $\mathcal B_1$ is bounded in $H^1_{\gamma,per}(\R^2\times [0,T_\tau])$ and then by Sobolev embedding it is compact in $L^2(A_{\ve M})$ and thus it must be finite dimensional.  To show that $\hat{L}_{w_\tau}(\zeta)$ has finite range we argue similarly (see for instance \cite{pacard_lecture} for a detailed proof).  To show that the codimension of the range is finite we use the fact that $\mathrm{dim}\,\mathrm{Ker}\,\hat{L}_{w_\tau}(\zeta)=\mathrm{codim}\, \mathrm{Range}(\hat L_{w_\tau}(\bar\zeta))$, by duality (the dual of $L^2_{\gamma, per}(\R^2\times [0,T_\tau])$ being $L^2_{-\gamma, per}(\R^2\times [0,T_\tau])$).  
\end{proof}

We will use this in proving:
\begin{proposition}\label{prop isomor}
There exists $\delta_\tau>0$ and a finite set $\mathcal S_0$, such that for all $a, \gamma$ with   $a^2+\gamma^2<\delta_\tau$, $a\notin \mathcal S_0$,  for all sufficiently small $\ve$ and for all $g\in L^2_{a,\gamma}(\R^3)$ there exists a solution of the problem
\begin{equation}\label{isomor 1}
L_{w_\tau}\phi=g,
\end{equation}
where $\phi\in H^2_{-|a|, \gamma}(\R^3)$.
\end{proposition}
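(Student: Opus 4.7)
The plan is to solve \eqref{isomor 1} via the Fourier--Laplace inversion machinery, exploiting the $T_\tau$-periodicity of $L_{w_\tau}$ in $z$. First I decompose $g = g_+ + g_-$ with $g_\pm \in L^2_{a,\gamma}(\R^3)_\pm$ via a smooth cutoff in $z$. The reflection $z\mapsto -z$ (which swaps the two ends of $D_\tau$ up to translation) together with $a\mapsto -a$ reduces matters to solving $L_{w_\tau}\phi_+ = g_+$ with $\phi_+ \in H^2_{-|a|,\gamma}(\R^3)_+$; the case of $g_-$ is handled identically.

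For $g_+$ supported in $\{z>-1\}$, its Fourier--Laplace transform $\hat g_+(\xi,\zeta)$ is well defined and holomorphic in $\zeta$ on the half plane $\mathrm{Im}\,\zeta < T_\tau a$, with values in $L^2_{\gamma,\mathrm{per}}(\R^2\times [0,T_\tau])$. By Lemma~\ref{fredholm a} the family $\hat L_{w_\tau}(\zeta)$ is Fredholm on $H^2_{\gamma,\mathrm{per}}(\R^2\times[0,T_\tau])$ for $\gamma^2<\delta_\tau$, and the expression for $\hat L_{w_\tau}(\zeta)$ given just before Lemma~\ref{concentration lemma} shows that it is in fact a holomorphic family of Fredholm operators in a neighborhood of the strip $|\mathrm{Im}\,\zeta|\le T_\tau\sqrt{\delta_\tau-\gamma^2}$. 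The analytic Fredholm theorem then gives the dichotomy: either $\hat L_{w_\tau}^{-1}(\zeta)$ exists nowhere in this strip, or it is a meromorphic family whose poles form a discrete set $\Sigma\subset\C$. The first alternative is excluded by applying the concentration estimate \eqref{conc 1} at $\zeta=i\nu$ with $|\nu|$ large: the real part of $-\varepsilon\zeta^2/T_\tau^2$ is $\varepsilon\nu^2/T_\tau^2$, and combined with the uniform negativity of $f'(w_\tau)$ outside a thin $\varepsilon$-tubular neighborhood of $D_\tau$ this yields a coercive estimate for $\hat L_{w_\tau}(i\nu)$, hence invertibility at at least one point.

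Define $\mathcal S_0 = \{\,a\in\R \,\mid\, iT_\tau a \in \Sigma\,\}\cap[-\sqrt{\delta_\tau},\sqrt{\delta_\tau}]$; this is a finite set by discreteness of $\Sigma$ intersected with a compact strip. For $a\notin\mathcal S_0$ with $a^2+\gamma^2<\delta_\tau$ the horizontal path $\zeta=\mu+iT_\tau a$, $\mu\in[0,2\pi]$, avoids $\Sigma$, and $\hat L_{w_\tau}^{-1}(\zeta)$ is uniformly bounded on it. Set
\[
\phi_+(x_1,x_2,z) = \frac{1}{2\pi}\int_0^{2\pi} e^{\,iz\zeta/T_\tau}\,\hat L_{w_\tau}^{-1}(\zeta)\,\hat g_+(z\modulo T_\tau,\zeta)\,d\mu,
\]
with $\zeta=\mu+iT_\tau a$. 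Plancherel's identity \eqref{fl plancherel}, together with the resolvent bounds from Lemma~\ref{concentration lemma} applied to $\hat L_{w_\tau}(\zeta)$, yields $\phi_+\in H^2_{-|a|,\gamma}(\R^3)_+$; the loss from $a$ to $-|a|$ reflects the fact that shifting the contour off the real axis produces growth in the direction complementary to the support of $g_+$. Summing with the analogous $\phi_-$ built from $g_-$ gives $\phi$ solving \eqref{isomor 1}.

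The main obstacle is quantitative control of the exceptional set: one must show that $\Sigma\cap\{|\mathrm{Im}\,\zeta|\le T_\tau\sqrt{\delta_\tau}\}$ is finite \emph{uniformly} in $\varepsilon$, so that $\mathcal S_0$ and $\delta_\tau$ can be chosen independent of $\varepsilon$. This requires that the constants in Lemma~\ref{concentration lemma} be tracked carefully so that the coercivity at large $|\mathrm{Im}\,\zeta|$ kicks in at a threshold that does not degenerate as $\varepsilon\to 0$. The complementary subtlety is that $\hat L_{w_\tau}$ involves the factor $1/\varepsilon$ in the nonlinear term, so the resolvent bound will carry explicit $\varepsilon$-dependence; one must verify that this is compatible with the mapping $L^2_{a,\gamma}(\R^3)\to H^2_{-|a|,\gamma}(\R^3)$ after returning via \eqref{nick cave 1}--\eqref{nick cave 2}.
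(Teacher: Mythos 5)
The framework you set up — decomposition $g=g_++g_-$, Fourier--Laplace transform, Fredholm property via Lemma~\ref{fredholm a}, analytic Fredholm dichotomy, contour integral inversion — matches the paper's strategy. The gap is in your argument for excluding the ``nowhere invertible'' alternative. You propose to take $\zeta=i\nu$ with $|\nu|$ large and use the positivity of $\mathrm{Re}(-\ve\zeta^2/T_\tau^2)=\ve\nu^2/T_\tau^2$ to get coercivity. This fails for three reasons. First, the admissible range of $\zeta$ is the strip $|\mathrm{Im}\,\zeta|<T_\tau\sqrt{\delta_\tau}$ with $\delta_\tau$ a small, $\ve$-independent constant; you are not free to send $\nu\to\infty$, and Lemma~\ref{concentration lemma} is only stated (and only valid) in that strip. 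Second, even if you could enlarge $\nu$, the potential $\ve^{-1}f'(w_\tau)$ is of order $+\ve^{-1}$ near the zero level set of $w_\tau$ (where $f'(w_\tau)\approx f'(0)=1>0$), so the term $\ve\nu^2/T_\tau^2$ would need $\nu\gtrsim\ve^{-1}$ to dominate; any exceptional set obtained this way would be $\ve$-dependent, which defeats the purpose of the proposition. Third, the right-hand side of \eqref{conc 1} contains the concentration term $C\ve^{-1}\|\phi\|^2_{L^2(A_{\ve M})}$, which is not absorbed by the $\ve\nu^2$ gain; this residual is precisely what encodes the near-kernel of the transversal Allen--Cahn operator, and it is the whole obstruction to invertibility.

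The actual argument cannot avoid a geometric input: the nondegeneracy of the Jacobi operator $\mathcal J_{D_\tau}$ of the Delaunay surface. The paper's proof proceeds by passing to the extended operator $\tilde{\mathbb L}_{w_\tau}(\zeta)$ on $\pD\times\R$, decomposing a putative kernel element as $\phi_{0\zeta}=\phi_{0\zeta}^{\parallel}+\psi_{0\zeta}\,{\tt V}$, using the coercivity Lemma~\ref{coerc} to control $\phi_{0\zeta}^{\parallel}$, and then projecting onto ${\tt V}$ to derive a perturbed Jacobi equation $\mathcal J_{\pD}\psi_{0\zeta}=T(\psi_{0\zeta},\phi_{0\zeta})$. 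The injectivity for $\zeta\in(0,2\pi)$ is then pulled from Proposition~4.2 of \cite{MR1941630}, which establishes invertibility of $\mathcal J_{\pD}$ with Bloch-twisted boundary conditions for nontrivial $\zeta$; this is the nondegeneracy of the Delaunay unduloid. The paper's $\mathcal S_0$ thus arises from the explicitly understood poles at $\zeta\equiv 0\pmod{2\pi}$, carrying the geometric Jacobi fields — it is not an abstract discrete set coming out of an analytic Fredholm argument. Your write-up also correctly flags the $\ve$-dependence issue but does not resolve it; in the paper this is handled by the separation into $\varTheta'$-direction and its orthogonal complement, which isolates the $\mathcal O(\ve)$ Jacobi-operator scale from the $\mathcal O(\ve^{-1})$ coercive scale.
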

Note that even if the right hand side of (\ref{isomor 1}) is decaying as $z\to \pm \infty$ (i.e. $a>0$) we get a solution which in general may be increasing as $z\to\pm\infty$ at the exponential rate proportional to $e^{\,|a||z|}$. 

}}

\begin{proof}[Proof of Proposition \ref{prop isomor}]
{\blue{The idea of the proof is to show that $\hat{L}_{w_\tau}(\zeta)$ is an isomorphism for $\zeta$ in some neighbourhood of $[0, 2\pi]$, except possibly a finite set of points, and then use the parametrix formula to solve (\ref{isomor 1}). Since $\hat{L}_{w_\tau}(\zeta)$ is a Fredholm family of holomorphic operators in an open set  $\mathcal U\subset \C$ with  $[0,2\pi]\subset \mathcal U$ it is either non invertible everywhere in $\mathcal U$ or    it is invertible except a discrete subset of $\mathcal U$ \cite{reed-simon_4}. In particular, if we consider $\zeta\in [0,2\pi]$  (note that the operator $\hat L_{w_\tau}(\zeta)$ is self adjoint for $\zeta\in \R$)  and are able to show  that it is injective there except possibly a discrete set of points then we will conclude that it is  invertible in $[0, 2\pi]$ except the discrete set and then the same will be true at least in a neighbourhood $\mathcal U$ of this segment.

To carry out this plan  we consider  $\hat L_{w_\tau}$ taken with respect to variable $z$. This operator is defined on the space of functions in  $H^2_{per}(\R^2\times [0, T_\tau])$ which consists of functions which are periodic with period $T_\tau$.  
 Recall  that we have
\[
\hat L_{w_\tau}(\zeta)=e^{\,-i\zeta \xi/T_\tau}\{\ve \Delta +\ve^{-1} f'(w_\tau)\} e^{\,i\zeta \xi/T_\tau}.
\]
We want to express $\hat L_{w_\tau}$ in terms of the stretched Fermi co-ordinates in $\mathcal N_\delta$. Let $\pD$ be the one period  piece of $D_\tau$ (i.e. $0\leq z\leq T_\tau$)  with the top and the bottom identified.  The natural domain for the expression of the Fourier-Laplace transforms  of functions in $L_{per}^2(\R^2\times [0,T_\tau])$ in the  stretched Fermi coordinates is $\pD\times [-\delta/\ve, \delta/\ve]$. For example 
from the definition of the shifted Fermi coordinates we see that  
\[
(Y^*_\ve \xi)(\sy, \st)=\left[\sy+\ve \st  N_\tau(\sy)\right]\cdot {\tt e}_3.
\]
It is convenient to extend this  function from  $\pD\times [-\delta/\ve, \delta/\ve]$ to $\pD\times\R$. We will use for this purpose the cutoff function $\chi_{\delta/\ve}$  defined in (\ref{def cutoff}) and  set
\[
\xi^*= \chi_{\ve/\delta}(Y^*_\ve \xi)+(1-\chi_{\ve/\delta}) \sy\cdot {\tt e}_3
\]
for the extension of  $(Y^*_\ve \xi)=\xi^*$, understanding that this is a function of $(\sy, \st)$.  

We use the operator $\mathbb L_{w_\tau}$  (see (\ref{ext l wtau}))  to define also a natural extension of $\hat L_{w_\tau}$ to $\pD\times \R$
\[
\tilde {\mathbb L}_{w_\tau}(\zeta)= e^{\,-i\zeta \xi^*/T_\tau}\mathbb L_{w_\tau} e^{\,i\zeta \xi^*/T_\tau}.
\]
The operator $\tilde {\mathbb L}_{w_\tau}(\zeta)$ is "almost"  the Fourier-Laplace transform of $\mathbb L_{w_\tau}$.  Note that $\tilde {\mathbb L}_{w_\tau}(0)=\mathbb L_{w_\tau}$.  The strategy of the proof is to show first  that the operator $\tilde {\mathbb L}_{w_\tau}(\zeta)$ is injective and then conclude from this that 
$\hat L_{w_\tau}(\zeta)$ is injective. 

We will study the kernel of $\tilde{\mathbb L}_{w_\tau}(\zeta)$ in the space of functions $L_{\gamma, per}^2(\pD\times\R)$.  Let us suppose that for some $\gamma$, $|\gamma|<\delta_\tau$, and $\zeta\in [0,2\pi]$ there exists a function $\phi\in H^2_{\gamma,per}(\pD\times \R)$ 
\[
\tilde{\mathbb L}_{w_\tau}(\zeta) \phi_0=\mathbb L_{w_\tau} (e^{\,i\zeta \xi^*/T_\tau}\phi_0)=\mathbb L_{w_\tau}\phi_{0\zeta} =0,
\]
where we have denoted
\begin{equation}
\label{def phizeta}
\phi_{0\zeta} =e^{\,i\zeta \xi^*/T_\tau} \phi_0.
\end{equation}
We can normalize $\|\phi_{0\zeta}\|_{L^2_\gamma(\pD\times \R)}=1$ and then by elliptic estimates for any $M>0$   in the set $\pD\times (-M, M)$ the function $\phi_{0\zeta}$ is bounded (we bound the real and imaginary parts of $\phi_{0\zeta}$ separately). Take $M$ large so that $f'(w_\tau)<-2+\eta$ with some small $\eta>0$.  Take $\delta_\tau$ in the statement of the Proposition small so that $\gamma\in (-\sqrt{2-\eta}, \sqrt{2-\eta})$. Using the comparison principle for the operator $\tilde{\mathbb L}_{w_\tau}$ it is then easy to show that in fact 
\begin{equation}
\label{est exp phi}
|\phi_{0\zeta} (\sy, \st)|=|\phi_0|\leq Ce^{-\sqrt{2-\eta} |\st|}
\end{equation}
and therefore $\phi_0\in H^2(\pD \times\R)$. 

For complex valued functions   $\phi_1,\phi_2\in L^2(\pD \times\R)$ we define  Hermitian inner product
\[
\langle \phi_1, \phi_2\rangle=\int_{\pD\times \R} \phi_1\bar \phi_2 d V_{\pD}d\st 
\]
}}
{\blue{

Above $\nabla_{\pD}$  and $d V_{\pD}$ are  respectively  the gradient and  the volume element on $\pD$.
We introduce an orthogonal decomposition in $L^2(\pD \times\R)$ as follows:  Let ${\tt V}$ be the function defined in (\ref{def vv}) (we recall that it is an extension of $\partial_{\st} Y^*_\ve w_\tau$).   Given a function $\phi\in   L^2(\pD \times\R)$ we denote $\phi_\zeta=e^{\,i\zeta \xi^*/T_\tau} \phi$ and  decompose  
\[
\phi_{\zeta}=\phi_{\zeta}^\parallel + \psi_{\zeta} {\tt V}, 
\]
where 
\[
\phi_{\zeta}^\parallel \in \mathcal X_\gamma:=\left \{\phi\in L^2_{\gamma,per}(\pD\times \R)\left| \int_\R \phi{\tt V}\, d\st=\int_\R \bar \phi{\tt V}\, d\st =0\right.\right\}, \quad  \psi_{\zeta}=\frac{\int_\R  \phi_{\zeta} {\tt V}\, d\st}{\int_\R  {\tt V}^2\, d\st}.
\]
In particular  for $\phi_0\in \mathrm{Ker}\, \tilde{\mathbb L}_{w_\tau}(\zeta)$ we have
\begin{equation}
\label{sakamoto}
{\mathbb L}_{w_\tau}\phi_{0\zeta}={\mathbb L}_{w_\tau}\phi_{0\zeta}^\parallel+{\mathbb L}_{w_\tau}(\psi_{0\zeta} {\tt V})=0
\end{equation}
and 
\begin{equation}
\left\langle -{\mathbb L}_{w_\tau}\phi_{0\zeta}^\parallel, \phi_{0\zeta}^\parallel\right\rangle=-\left \langle {\mathbb L}_{w_\tau} \psi_{0\zeta} {\tt V}, \phi_{0\zeta}^\parallel\right\rangle.
\label{urinals}
\end{equation}
We will use this identity to estimate $\phi_{0\zeta}^\parallel$ in terms of suitable norm of $\psi_{0\zeta}$. To do so we need:
\begin{lemma}\label{coerc}
It holds 
\begin{equation}
\label{fl 2}
\left|\langle -{\mathbb L}_{w_\tau}\phi_{\zeta}^\parallel, \phi_{\zeta}^\parallel\rangle\right|\geq \frac{C}{\ve}\left(\|\partial_{\st}\phi_{\zeta}^\parallel\|^2_{L^2(\pD\times \R)}+\|\phi_{\zeta}^\parallel\|^2_{L^2(\pD\times \R)}\right)+C\ve \|\nabla_{\pD} \phi_{\zeta}^\parallel\|^2_{L^2(\pD\times \R)}.
\end{equation}
\end{lemma}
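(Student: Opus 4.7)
The plan is to reduce the estimate to the well-known spectral gap for the one-dimensional Allen--Cahn linearized operator
\[
\mathcal L_0 = -\partial_{\st\st}-f'(\varTheta(\st)),
\]
whose kernel in $L^2(\R)$ is spanned by $\varTheta'$ and which satisfies
\[
\langle \mathcal L_0 \phi,\phi\rangle_{L^2(\R)} \;\geq\; c_0\bigl(\|\partial_\st \phi\|^2_{L^2(\R)}+\|\phi\|^2_{L^2(\R)}\bigr)
\]
whenever $\int_\R \phi\,\varTheta'\,d\st=0$. First I would split the Hermitian form using the decomposition (\ref{ext l wtau}). Integrating by parts in $\st$ and $\sy$ (which is legitimate since $\pD$ is closed and $\phi_\zeta^\parallel$ decays in $\st$ by (\ref{est exp phi})) gives
\begin{align*}
\mathrm{Re}\,\langle -\mathbb L_{w_\tau}\phi^\parallel_\zeta,\phi^\parallel_\zeta\rangle
&=\frac{1}{\ve}\int_{\pD\times\R}\Bigl(|\partial_\st\phi^\parallel_\zeta|^2-f'({\tt w}_\tau)|\phi^\parallel_\zeta|^2\Bigr)dV_{\pD}d\st
+\ve\int_{\pD\times\R}|\nabla_{\pD}\phi^\parallel_\zeta|^2 dV_{\pD}d\st\\
&\quad+\mathcal R(\phi^\parallel_\zeta),
\end{align*}
where $\mathcal R$ collects the contributions of the first order transport term $(H_{D_\tau}+\ve\st\chi|A|^2+\chi\mathbb Q_\ve)\partial_\st$ and of $\ve\chi\mathbb A_\ve$. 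The key observation here is that $H_{D_\tau}=H_{D_\tau}(\sy)$ is independent of $\st$, so when the $H_{D_\tau}\partial_\st$ contribution is symmetrized its real part collapses to $\tfrac12 H_{D_\tau}\partial_\st|\phi^\parallel_\zeta|^2$ and integrates to zero; the remaining pieces in $\mathcal R$ are of order $\ve$ times $\|\phi^\parallel_\zeta\|^2+\|\partial_\st\phi^\parallel_\zeta\|^2+\|\nabla_{\pD}\phi^\parallel_\zeta\|^2$ thanks to (\ref{ab est}) and the definition of $\chi_{\ve/\delta}$, so they can be absorbed into the main terms once $\ve$ is small enough.

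The main step is to estimate the $\ve^{-1}$ piece by slicing in $\sy$. For each fixed $\sy\in\pD$ the function $\st\mapsto\phi^\parallel_\zeta(\sy,\st)$ satisfies $\int_\R\phi^\parallel_\zeta(\sy,\st){\tt V}(\sy,\st)\,d\st=0$. Using (\ref{vasymp}) this orthogonality is $\mathcal O(\ve^{2-\alpha})$--close to the exact orthogonality condition against $\varTheta'$: decomposing
\[
\phi^\parallel_\zeta(\sy,\cdot)=\phi^{\perp\varTheta'}(\sy,\cdot)+c(\sy)\varTheta'(\cdot),\qquad
c(\sy)=\frac{\int_\R \phi^\parallel_\zeta\,\varTheta'\,d\st}{\|\varTheta'\|_{L^2}^2},
\]
the coefficient satisfies $|c(\sy)|\leq C\ve^{2-\alpha}\|\phi^\parallel_\zeta(\sy,\cdot)\|_{L^2(\R)}$. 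Apply the spectral gap of $\mathcal L_0$ to $\phi^{\perp\varTheta'}$ and use that $f'({\tt w}_\tau)-f'(\varTheta)=\mathcal O(\ve)$ by (\ref{asymp wtau}); the resulting error in the quadratic form is $\mathcal O(\ve^{-1}\cdot\ve)\|\phi^\parallel_\zeta\|^2$ for the first correction and $\mathcal O(\ve^{3-2\alpha})$ for the $c(\sy)\varTheta'$ component, both harmless. Integrating the resulting inequality over $\sy\in\pD$ produces the bound
\[
\frac{1}{\ve}\int_{\pD\times\R}\Bigl(|\partial_\st\phi^\parallel_\zeta|^2-f'({\tt w}_\tau)|\phi^\parallel_\zeta|^2\Bigr)\geq \frac{c}{\ve}\bigl(\|\partial_\st\phi^\parallel_\zeta\|^2_{L^2(\pD\times\R)}+\|\phi^\parallel_\zeta\|^2_{L^2(\pD\times\R)}\bigr).
\]

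Combining this with the manifestly nonnegative $\ve\|\nabla_{\pD}\phi^\parallel_\zeta\|^2$ piece and absorbing $\mathcal R$ yields (\ref{fl 2}). Finally, since the relation between $\phi^\parallel_\zeta=e^{i\zeta\xi^*/T_\tau}\phi^\parallel$ and $\phi^\parallel$ introduces only bounded factors (because $\zeta\in[0,2\pi]$ and $\xi^*$ is real there), the weighted estimate for $\phi^\parallel_\zeta$ immediately transfers. The main technical obstacle I anticipate is controlling the $\sy$-dependence of ${\tt V}$ in the orthogonality relation: one must verify that the coefficient $c(\sy)$ inherits enough regularity in $\sy$ so that the cross terms produced by $\ve\Delta_{D_\tau}$ acting on $c(\sy)\varTheta'$ can be absorbed, which requires using (\ref{expr v4}) in a Cauchy--Schwarz style bound rather than a pointwise one. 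Once this is handled, the rest amounts to careful bookkeeping of $\ve$ powers.
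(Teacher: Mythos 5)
Your proposal is essentially the paper's argument: reduce to the one-dimensional Allen--Cahn spectral gap by noting that the orthogonality against ${\tt V}$ is a small perturbation of exact orthogonality against $\varTheta'$, show the resulting correction coefficient is small, and absorb the lower-order transport and $\mathbb A_\ve$ terms. The paper organizes this around the model quadratic form $\mathcal B(\phi,\phi)=\frac1\ve\int|\partial_\st\phi|^2+\ve^2|\nabla_{\pD}\phi|^2-f'(\varTheta)|\phi|^2$ rather than slicing fiber-by-fiber in $\sy$, but the content is the same; your observation that the $H_{D_\tau}\partial_\st$ term integrates to zero in the real part is also implicit in the paper's expansion.

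One quantitative slip: you claim $|c(\sy)|\le C\ve^{2-\alpha}\|\phi^\parallel_\zeta(\sy,\cdot)\|_{L^2(\R)}$ by citing \eqref{vasymp}, i.e.\ ${\tt V}=U'+\mathcal O(\ve^{2-\alpha})$. But the orthogonality must be compared against $\varTheta'$, not $U'$, and $U'-\varTheta'=\mathcal O(\ve)$ (since $U=\varTheta+\mathcal O(\ve)$). Hence ${\tt V}-\varTheta'=\mathcal O(\ve)$ and the coefficient bound is $|c(\sy)|\le C\ve\|\phi^\parallel_\zeta(\sy,\cdot)\|_{L^2(\R)}$, in agreement with the paper's \eqref{est phi2}. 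This is harmless for the conclusion (both are $o(1)$), but the stated exponent $2-\alpha$ is incorrect. You also flagged, but did not carry out, the control of $\nabla_{\pD}c(\sy)$; the paper handles it by differentiating the orthogonality relation and using $\nabla_{\pD}{\tt V}=\mathcal O(\ve^{2-\alpha})$, which is worth including since otherwise the $\ve\|\nabla_{\pD}\phi^\parallel_\zeta\|^2$ term cannot be cleanly recovered after subtracting the $c(\sy)\varTheta'$ piece.
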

\begin{proof}[Proof of Lemma \ref{coerc}]
We recall the well known fact: with  $\varTheta(x)=\tanh\big(\frac{x}{\sqrt{2}}\big)$  the bilinear form 
\begin{equation}
\int_\R |\psi'|^2-f'(\varTheta)\psi^2
\label{coerc zero form}
\end{equation}
is positive definite on the space  of functions $L^2(\R)$ orthogonal to $\varTheta'(x)$. Consider a quadratic  form
\[
\mathcal B(\phi,\phi)=\frac{1}{\ve}\int_{\pD \times \R} |\partial_\st \phi|^2+\ve^2|\nabla_{\pD}\phi|^2-f'(\varTheta)|\phi|^2
\]
for  $\phi\in \mathcal X_\gamma$. Write
\[
\phi=\phi_1+\phi_2 \varTheta', \qquad \mbox{where}\qquad \left( \phi_1, \varTheta'\right)=0, \qquad \phi_2=\frac{\left( \phi, \varTheta'\right)}{\left( \varTheta', \varTheta'\right)},
\]
and where we have denoted 
\[
\left (\phi, \psi\right)=\int_\R \phi\bar \psi\,d\st
\]
We have 
\[
0=\left( \phi,   {\tt V}\right) =\left( \phi, \varTheta'\right) +\left(  \phi, {\tt V}-\varTheta'\right)=\phi_2\left( \varTheta', \varTheta'\right)+\left(  \phi, {\tt V}-\varTheta'\right), 
\]
and also 
\[
0=\nabla_{\pD}\left(\phi, {\tt V}\right)=\left( \nabla_\pD\phi, {\tt V}\right)+\left(\phi, \nabla_{\pD} {\tt V}\right)=\nabla_{\pD}\phi_2 \left( \varTheta', \varTheta'\right)^{2}+
\left( \nabla_\pD\phi, {\tt V}-\varTheta'\right)+\left(\phi, \nabla_{\pD} {\tt V}\right)
\]
Since 
\[
{\tt V}-\varTheta'=U'-\varTheta'+\mathcal O_{\mathcal C^{1,\alpha}_\mu\pD \times \R} (\ve^{2-\alpha})=\mathcal O_{\mathcal C^{1,\alpha}_\mu\pD \times \R} (\ve), \qquad \nabla_{\pD} {\tt V}=\mathcal O_{\mathcal C^{1,\alpha}_\mu\pD \times \R} (\ve^{2-\alpha})
\]
we get
\begin{equation}
\label{est phi2}
\|\phi_2 \varTheta'\|_{H^1(\pD \times \R)}\leq C\ve \|\phi\|_{H^1(\pD \times \R)}.
\end{equation}
By (\ref{coerc zero form})
\[
\mathcal B(\phi_1, \phi_1)
\geq \frac{C}{\ve}\left (\|\phi_1\|^2_{L^2(\pD \times \R)}+\|\partial_{\st}\phi_1\|^2_{L^2(\pD \times \R)}+\ve^2\|\nabla_{\pD}\phi_1\|^2_{L^2(\pD \times \R)}\right),
\]
hence  from   (\ref{est phi2})
\[
\mathcal B(\phi, \phi)
\geq \frac{C}{\ve}\left (\|\phi\|^2_{L^2(\pD \times \R)}+\|\partial_{\st}\phi\|^2_{L^2(\pD \times \R)}+\ve^2\|\nabla_{\pD}\phi\|^2_{L^2(\pD \times \R)}\right),
\]
for any $\phi\in \mathcal X_\gamma$.  

We get
\[
\begin{aligned}
\langle -{\mathbb L}_{w_\tau}\phi_{\zeta}^\parallel, \phi_{\zeta}^\parallel\rangle&=\mathcal B(\phi_{\zeta}^\parallel, \phi_{\zeta}^\parallel)-\frac{1}{\ve}\int_{\pD\times \R} [f'({\tt w}_\tau)-f'(\varTheta)]|\phi_{\zeta}|^2\,d V_{\pD}d\st\\
&\qquad -\frac{\ve}{2} \int_{\pD\times \R}\partial_\st\left(\st\mathbb \chi_{\ve/\delta}\right)|A_{\pD}|^2|\phi_{\zeta}|^2\, d V_{\pD}d\st 
+ \langle \chi_{\ve/\delta} \mathbb Q_\ve \partial_\st \phi_\zeta, \phi_\zeta\rangle  +\ve \langle \chi_{\ve/\delta}\mathbb A_\ve\phi_\zeta, \phi_\zeta\rangle\\
&=\mathcal B(\phi_{\zeta}^\parallel, \phi_{\zeta}^\parallel)+\left (\mathcal O(1)+\mathcal O(\ve/\delta)+\mathcal O(\delta^2)\right)\left(\|\phi_{\zeta}^\parallel\|^2_{L^2(\pD \times \R)}+\|\partial_{\st}\phi_{\zeta}^\parallel\|^2_{L^2(\pD \times \R)}\right)+\mathcal O(\ve \delta)\|\nabla_{\pD}\phi_{\zeta}^\parallel\|_{L^2(\pD \times \R)}^2.
\end{aligned}
\]
Since $\delta$ can be taken as small as we wish the assertion of the Lemma follows. 
\end{proof}

Now, we need to control the mixed  term in (\ref{urinals})
\[
\begin{aligned}
\langle - {\mathbb L}_{w_\tau} (\psi_{0\zeta} {\tt V}), \phi_{0\zeta}^\parallel\rangle&=
-\ve\left \langle {\tt V} \left(\mathcal J_{D_\tau}+\chi_{\ve/\delta}\mathbb A_\ve\right)\psi_{0\zeta},\phi^\parallel_{0\zeta} \right\rangle - \ve \left\langle \left[\Delta_{D_\tau} +\chi_{\ve/\delta}\mathbb A_\ve, {\tt V}\right]\psi_{0\zeta},  \phi^\parallel_{0\zeta} \right\rangle\\
&\qquad +\left\langle \mathcal O_{C^{0, \alpha}_\mu(D_\tau\times \R)} (\ve^{3-\alpha})\psi_{0\zeta}, \phi^\parallel_{0\zeta} \right\rangle
\\
&=\left\langle \mathcal O_{C^{0, \alpha}_\mu(D_\tau\times \R)} (\ve^{2})\nabla_{\pD}\psi_{0\zeta}, \nabla_{\pD}\phi^\parallel_{0\zeta} \right\rangle+\left\langle \mathcal O_{C^{0, \alpha}_\mu(D_\tau\times \R)} (\ve^{3-\alpha})\psi_{0\zeta}, \phi^\parallel_{0\zeta} \right\rangle
\end{aligned}
\]
where the last equality follows because the coefficients of the operator $\mathbb A_\ve$ are bounded by $\ve\st$ and ${\tt V}$ is exponentially decaying in $\st$. 
By the  Cauchy-Schwarz inequality for any some $\eta>0$ small we get 
\begin{equation}
\label{fl 5}
\left|\left\langle - {\mathbb L}_{w_\tau}(\psi_{0\zeta} {\tt V}), \phi_{0\zeta}^\parallel\right\rangle\right|\leq  \eta \left(\ve^{-1}\|\phi_{0\zeta}^\parallel\|^2_{H^1(\pD\times \R)}+ \ve\|\phi_{0\zeta}^\parallel\|^2_{H^1(\pD\times \R)}\right)+C_\eta \ve^3\|\psi_{0\zeta}\|^2_{H^1(\pD)}.
\end{equation}
It follows from (\ref{urinals}) and Lemma \ref{coerc}
\[
\frac{1}{\ve}\left(\|\partial_{\st}\phi_{0\zeta}^\parallel\|^2_{L^2(\pD\times \R)}+\|\phi_{0\zeta}^\parallel\|^2_{L^2(\pD\times \R)}\right)+\ve \|\nabla_{\pD} \phi_{0\zeta}^\parallel\|^2_{L^2(\pD\times \R)}\leq C\ve^{3}\|\psi_{0\zeta}\|^2_{H^1(\pD)}.
\]

Now consider the orthogonal complement of $\mathcal X_\gamma$. From (\ref{expr v3})   we obtain:  
\[
\mathbb L_{w_\tau} (\psi_{0\zeta}{\tt V})=\ve{\tt V} \left(\mathcal J_{\pD}+\chi_{\ve/\delta}\mathbb A_\ve\right)\psi_{0\zeta}+ \ve \left[\Delta_{D_\tau} +\chi_{\ve/\delta}\mathbb A_\ve, {\tt V}\right]\psi_{0\zeta}+\mathcal O_{C^{0, \alpha}_\mu(D_\tau\times \R)} (\ve^{3-\alpha})\psi_{0\zeta},
\]
Using this and projecting (\ref{sakamoto}) onto ${\tt V}$ and integrating over $\R$ we get
\begin{equation}
\label{psi0}
\mathcal J_{\pD} \psi_{0\zeta} = T(\psi_{0\zeta}, \phi_{0\zeta})
\end{equation}
where
\begin{equation}
\label{psi00}
\begin{aligned}
\|T(\psi_{0\zeta}, \phi_{0\zeta})\|_{L^2(\pD)}&\leq C\delta\|\psi_{0\zeta}\|_{H^2(\pD)}+C\left( \ve^{-1} \|\phi_{0\zeta}^\parallel\|_{L^2(\pD\times\R)}+\ve^{2-\alpha}\|\phi_{0\zeta}^\parallel\|_{H^1(\pD\times\R)}\right)\\
&\leq C\left(\delta \|\psi_{0\zeta}\|_{H^2(\pD)} +\ve\|\psi_{0\zeta}\|_{H^1(\pD)} \right)
\end{aligned}
\end{equation}
We claim that from this it follows that for any $\zeta\in (0,1)$ there exists $\ve_\zeta>0$ such that for any $\ve\in (0, \ve_\zeta)$ we have $\psi_{0\zeta}=0$ and hence $\phi_0=0$. To show this claim  we note  that by definition 
\[
\begin{aligned}
\psi_{0\zeta}&=\frac{\left(\phi_{0\zeta}, {\tt V}\right)}{\left({\tt V}, {\tt V}\right)}=\frac{\left(e^{\,i\zeta \xi^*/T_\tau}\phi_0 , {\tt V}\right)}{\left({\tt V}, {\tt V}\right)}
=\frac{\left(e^{\,i\zeta (\sy_3+\ve\chi_{\ve/\delta} \st  N_\tau\cdot {\tt e}_3)/T_\tau}\phi_0 , {\tt V}\right)}{\left({\tt V}, {\tt V}\right)}=\frac{e^{\,i\zeta\sy_3/T_\tau} \left(e^{\,i\zeta\ve\chi_{\ve/\delta} \st  N_\tau\cdot {\tt e}_3)/T_\tau}\phi_0 , {\tt V}\right)}{\left({\tt V}, {\tt V}\right)}\\
&=e^{\,i\zeta\sy_3/T_\tau}\tilde \psi_{0\zeta}
\end{aligned}
\]
where $\tilde \psi_{0\zeta}$ is periodic in $\sy_3$ with period $T_\tau$.
We see that $\psi_{0\zeta}$  satisfies 
\[
\psi_{0\zeta}(\sy_1, \sy_2, \sy_3+T_\tau)=e^{\,i\zeta} \psi_{0\zeta}(\sy_1, \sy_2, \sy_3), \qquad \sy = (\sy_1, \sy_2, \sy_3)\in D_\tau,
\]  
with similar relation for $\partial_{\sy_3} \psi_{0\zeta}$. By Proposition 4.2   in \cite{MR1941630} we know that the operator $J_{\pD}$ is invertible in the space of functions satisfying these conditions as long as $\zeta\in (0, 2\pi)$ with an inverse whose norm depends on $\tau$. The claim now follows from (\ref{psi0}) and (\ref{psi00}).

In particular we conclude that the operator $\tilde{\mathbb L}_{w_\tau}(\zeta)$ is injective for $\zeta\in (0, 2\pi)$ and by the same argument for  $\zeta\in (-2\pi, 0)$ (note that $\tilde{\mathbb L}^*_{w_\tau}(\zeta)= \tilde{\mathbb L}_{w_\tau}(-\zeta)$). A version of Lemma  \ref{concentration lemma}  for  $\tilde{\mathbb  L}_{w_\tau}(\zeta)$ shows that this operator  is Fredholm, depends analytically on $\zeta$ and, as a consequence, it is invertible in a neighbourhood of $[0,2\pi]$ except for a discrete set. 

Now let us suppose that for some $\zeta\in (0,2\pi)$ there exists a function $\phi_0\in H^2_{\gamma, per}(\R^2\times [0,T_\tau])$, with some $\gamma$,  $|\gamma|$  small,   such that $\hat L_{w_\tau}(\zeta) \phi_0=0$. Since $\phi_0$ is bounded locally near $\pD$ we can use comparison principle to show that $\phi_0$ is decaying away from $\pD$ 
at least like $e^{\,-\sqrt{2-\eta}\frac{|r-\rho(\xi)|}{\ve}}$ (the argument is similar to the one leading to (\ref{est exp phi})). Using Lemma \ref{concentration lemma} we get
\begin{equation}
\label{est h2l2}
\|\phi_0\|_{H^2_{\pm \gamma, per}(\R^2\times [0, T_\tau])}\leq C\ve^{-1}\|\phi_0\|_{L^2_{per}(\R^2\times [0, T_\tau])}.
\end{equation} 
We normalize $\|\phi_0\|_{H^1_{per}(\R^2\times [0, T_\tau])}=1$ and  set $\tilde \phi_0=\chi_{\ve/\delta}Y_\ve^*\phi_0$. With this notation (see (\ref{nick cave 1}))
\[
\|\tilde \phi_0\|_{L^2(\pD\times \R)}\sim \ve^{-1/2} \|\phi_0\|_{L^2(\{\mathrm{dist}\,(x, \pD)<\delta\})}
\]
since $d\st dV_{\pD}\sim \ve^{-1} dx$. Similarly, we have
\[
\|\tilde \phi_0\|_{H^1(\pD\times \R)}\sim \ve^{1/2} \|\phi_0\|_{H^1(\{\mathrm{dist}\,(x, \pD)<\delta\})}.
\] 
Next, we observe that since $\phi_0$ is decaying exponentially away from $\pD$ we have by (\ref{est h2l2})
\[
\begin{aligned}
\|\phi_0\|^2_{H^1_{per}(\R^2\times [0,T_\tau])}&=\|\phi_0\|^2_{H^1(\{\mathrm{dist}\,(x, \pD)<\delta\})}+\|\phi_0\|^2_{H^1(\{\mathrm{dist}\,(x, \pD)>\delta\})}\\
&\leq \|\phi_0\|^2_{H^1(\{\mathrm{dist}\,(x, \pD)<\delta\})}+\mathcal O(e^{\,-c\delta/\ve})\|\phi\|^2_{H^1_{\gamma, per}(\R^2\times [0, T_\tau])\cap H^1_{-\gamma, per}(\R^2\times [0, T_\tau]) }\\
&\leq \|\phi_0\|^2_{H^1(\{\mathrm{dist}\,(x, \pD)<\delta\})}+\mathcal O(\ve^{-1}e^{\,-c\delta/\ve})\|\phi_0\|^2_{H^1_{per}(\R^2\times [0,T_\tau])},
\end{aligned}
\]
hence
\begin{equation}
\label{low phi}
 \|\phi_0\|^2_{H^1(\{\mathrm{dist}\,(x, \pD)<\delta\})}\geq \frac{1}{2}\|\phi_0\|^2_{H^1_{per}(\R^2\times [0,T_\tau])}=\frac{1}{2}.
 \end{equation}
Given all this we claim that  we can find a nontrivial function $\tilde \phi=\tilde \phi_0+\tilde \phi_1$, $\psi\in L^2(\pD\times \R)$,  such that $\tilde{\mathbb L}_{w_\tau}(\zeta)  \tilde\phi=0$, by solving 
\begin{equation}
\label{till}
\tilde{\mathbb L}_{w_\tau}(\zeta) \tilde\phi_1=-\left[\chi_{\ve/\delta}, \tilde {\mathbb L}_{w_\tau}(\zeta)\right]\tilde \phi_0
-e^{\,-i\zeta\xi^*/T_\tau}\chi_{\ve/\delta}(1-\chi_{\ve/\delta})\left[\left(\ve \st|A_{D_\tau}|^2+ \mathbb Q_\ve\right)\partial_\st+\ve\mathbb A_\ve\right]e^{\,i\zeta\xi^*/T_\tau}\tilde \phi_0:= R_{\ve/\delta}(\sy, \st).
\end{equation}
In fact, since $R_{\ve/\delta}$  is supported in the set $\delta/2\ve\leq |\st|\leq \delta/\ve$ therefore
\[
\|R_{\ve/\delta}\|_{L^2(\pD\times \R)}\leq C e^{\,-c\delta/\ve}\|\phi_0\|_{H^1_{\gamma,per}(\R^2\times [0,T_\tau])\cap H^1_{-\gamma,per}(\R^2\times [0,T_\tau])}.
\]
Next we decompose $\tilde \phi_1=\tilde\phi_1^\parallel+\tilde \psi_1 {\tt V}$ and use (with only slight modifications) the argument that we have used to show that $\tilde {\mathbb L}_{w_\tau}(\zeta)$ is injective to get:
\[
\|\tilde \phi_1\|_{H^1(\pD\times \R)}\leq C\ve^{-1} \|R_{\ve/\delta}\|_{L^2(\pD\times \R)}\leq C e^{\,-c\delta/\ve}\|\phi_0\|_{L^2_{per}(\R^2\times [0, T_\tau])}.
\]
From (\ref{low phi}) it now  follows 
\[
\begin{aligned}
\|\tilde \phi\|_{H^1(\pD\times \R)}&\geq \|\tilde \phi_0\|_{H^1(\pD\times \R)}-\|\tilde\phi_1\|_{H^1(\pD\times \R)}\\
&\geq C\ve^{1/2} \|\phi_0\|^2_{H^1(\{\mathrm{dist}\,(x, \pD)<\delta\})}+\mathcal O(e^{\,-c\delta/\ve})\|\phi_0\|_{L^2_{per}(\R^2\times [0, T_\tau])}>0,
\end{aligned}
\]
for $\ve$ sufficiently small. This  contradicts the fact that   $\tilde{\mathbb L}_{w_\tau}(\zeta)$ is injective. 
Taking this  into account we see that $\hat{L}_{w_\tau}(\zeta)$ is invertible at least for $\zeta\in (0,2\pi)$, and thus by the Fredholm alternative is invertible for all $\zeta$ such that $|\mathrm{Im}\, \zeta|<\delta_\tau$, expect possibly a finite set  where $\hat{L}^{-1}_{w_\tau}(\zeta)$ has poles. We claim that the required properties of $L_{w_\tau}$ follow now by taking the inverse Fourier-Laplace transform at any $a$ for which $\hat{L}^{-1}_{w_\tau, \gamma}(\zeta)$ is well defined for $\zeta=\mu+i T_\tau a$, $\mu\in [0,2\pi]$. Indeed, given $g\in L^2_{a, \gamma}(\R^3)$ with $a^2+\gamma^2<\delta_\tau$ and cutoff functions $\chi^\pm(z)$ such that $\chi^+(z)+\chi^-(z)=1$ and $\mathrm{supp}\, \chi^+=(-1, \infty)$ we can solve
\[
L_{w_\tau}\phi^{\pm}= \chi^\pm g. 
\]
To do this we let $\hat g^\pm$ to be   the Fourier-Laplace  transforms of $g^\pm$. Then we  solve
\[
\hat L_{w_\tau}(\zeta)\hat\phi^\pm = \hat g^\pm\Longrightarrow \hat \phi^\pm =\hat L_{w_\tau}(\zeta)^{-1}\hat g^\pm,
\]
and by taking the inverse of the Fourier-Laplace transform $\mathcal F$ we determine
\[
\phi^\pm =\mathcal F^{-1}\big(\hat L_{w_\tau}(\zeta)^{-1}\hat g^\pm\big),
\] 
and define 
\[
\phi=G_{w_\tau}(g):=\phi^-+\phi^+.
\]
This ends the proof.

}}
\end{proof}

{\blue{
\begin{remark}\label{bootstrap}
We will describe a useful  consequence of local elliptic estimates. Let us suppose that we know {\it a priori} $\phi, g\in L^2_{a, \gamma}(\R^3)$ where
\[
\Delta\phi =g.
\]
The goal is to obtain weighted Sobolev estimates for the derivatives of $\phi$. First, consider a cube $Q_{r}(x_0)$ centred at $x_0\in \R^3$ and with its sides equal to $r$. Standard elliptic estimates show
\[
r\|D^2\phi\|_{L^2(Q_r(x_0))}+\|\nabla\phi\|_{L^2(Q_r(x_0))}\leq C\|g\|_{L^2(Q_{2r}(x_0))}+Cr^{-1}\|\phi\|_{L^2(Q_{2r}(x_0))}.
\]  
If $r=\ve$ then we get from this
\[
\ve\|D^2\phi\|_{L^2_{a,\gamma}(Q_{\ve}(x_0))}+\|\nabla\phi\|_{L^2_{a,\gamma}(Q_\ve(x_0))}\leq C\|g\|_{L^2_{a,\gamma}(Q_{2\ve}(x_0))}+C\ve^{-1}\|\phi\|_{L^2_{a,\gamma}(Q_{2\ve}(x_0))},
\]
since the exponential weights are comparable on the sets with diameters proportional to $\ve$. Arranging now a countable collection of cubes $\{Q_{\ve}(x_j)\}_{j\in \mathbb N}$ in such a way that for each $x_j$ the number of cubes $Q_{2\ve}(x_{j'})$, $j'\neq j$,  whose intersection with $Q_{\ve}(x_j)$ is nonempty is finite and  bounded independently on $j$, while at the same time $\R^3=\cup_{j\in \mathbb N} Q_{\ve}(x_j)$, we see that above local estimates can be summed up  to yield:
\begin{equation}\label{boostrap 1}
\ve\|D^2\phi\|_{L^2_{a,\gamma}(\R^3)}+\|\nabla\phi\|_{L^2_{a,\gamma}(\R^3)}\leq C\|g\|_{L^2_{a,\gamma}(\R^3)}+C\ve^{-1}\|\phi\|_{L^2_{a,\gamma}(\R^3)}.
\end{equation} 
\end{remark}

\begin{lemma}\label{decay in r}
Let $\phi\in L^2_{a,\gamma'}(\R^3)$ be a solution of $L_{w_\tau} \phi=g$ with $g\in L^2_{a, \gamma}(\R^3)$ where $\gamma>0$, $\gamma'<\gamma$ and $a^2+\gamma^2<\delta_\tau$, $a^2 + \gamma'^2<\delta_\tau$. Then $\phi\in L^2_{a, \gamma}(\R^3)$.   
An analogous  statement holds when we assume that $\gamma<0$ and $\gamma<\gamma'$. 
\end{lemma}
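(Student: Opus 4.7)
I treat the first case $\gamma>0$, $\gamma'<\gamma$; the second is symmetric. The idea is to apply the weighted energy estimate of Remark \ref{rem concentration estimate} with a smoothly truncated version of the stronger target weight, and then pass to the limit by monotone convergence. Pick a smooth nondecreasing function $\eta_R\colon\R\to\R$ with $\eta_R(s)=s$ for $s\le R$, $\eta_R(s)=R+1$ for $s\ge R+2$, and $0\le \eta_R'\le 1$, and set
\[
\Gamma({\tt x}):=(\cosh z)^{2a}e^{\,2\gamma(r-\rho_\tau(z))/\ve},\qquad \Gamma_R({\tt x}):=(\cosh z)^{2a}\exp\!\Bigl(2\gamma\,\eta_R\!\bigl((r-\rho_\tau(z))/\ve\bigr)\Bigr).
\]
Then $\Gamma_R\le\Gamma$ pointwise, $\Gamma_R\nearrow\Gamma$ as $R\to\infty$, and crucially $|\nabla\Gamma_R|/\Gamma_R\le C\ve^{-1}$ uniformly in $R$ (this is the only place where $|\eta_R'|\le 1$ is used). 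Moreover, $\Gamma_R$ is bounded above in $r$ for each fixed $R$, and on the inside $\{r\le\rho_\tau(z)\}$ the inequality $\gamma>\gamma'$ gives $\Gamma_R=\Gamma\le(\cosh z)^{2a}e^{\,2\gamma'(r-\rho_\tau)/\ve}$; so the hypothesis $\phi\in L^2_{a,\gamma'}(\R^3)$ guarantees $\int_{\R^3}|\phi|^2\Gamma_R<\infty$ for every fixed $R$, and the manipulations below are legitimate.

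\textbf{Step 1 (energy estimate with $\Gamma_R$).} Multiply $L_{w_\tau}\phi=g$ by $\bar\phi\,\Gamma_R\chi_{\ve M}^2$, with $\chi_{\ve M}$ the cutoff of Lemma \ref{concentration lemma} supported off $A_{\ve M/2}$ and equal to $1$ outside $A_{\ve M}$, and integrate by parts on $\R^3$. Following the proof of Lemma \ref{concentration lemma} together with Remark \ref{rem concentration estimate} verbatim --- Young's inequality applied to the cross term involving $\nabla\phi\cdot\nabla\Gamma_R$, using the uniform ratio bound, together with the coercivity $f'(w_\tau)<-\sqrt{2}/2$ off $A_{\ve M}$ --- one obtains
\[
\ve\int_{\R^3}|\nabla\phi|^2\Gamma_R\chi_{\ve M}^2 +\frac{1}{\ve}\int_{\R^3}|\phi|^2\Gamma_R\chi_{\ve M}^2 \le C\int_{\R^3}|g|^2\Gamma_R +\frac{C}{\ve}\int_{A_{\ve M}}|\phi|^2\Gamma_R,
\]
with $C$ independent of $R$ (the term $\ve\int|\phi|^2|\nabla\chi_{\ve M}|^2$ having been absorbed into the tube integral as in (\ref{decay estimate})).

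\textbf{Step 2 (uniform bound and limit).} Since $\Gamma_R\le\Gamma$ pointwise and $g\in L^2_{a,\gamma}(\R^3)$, the first right hand term is bounded by $C\|g\|^2_{L^2_{a,\gamma}(\R^3)}$. Inside the tube $A_{\ve M}$ the quantity $(r-\rho_\tau)/\ve$ is bounded, hence $e^{\,2\gamma(r-\rho_\tau)/\ve}$ and $e^{\,2\gamma'(r-\rho_\tau)/\ve}$ are uniformly equivalent, and the second term is bounded by $C\ve^{-1}\|\phi\|^2_{L^2_{a,\gamma'}(\R^3)}<\infty$. The right hand side is therefore finite and independent of $R$. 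Splitting
\[
\int_{\R^3}|\phi|^2\Gamma=\int_{\R^3}|\phi|^2\Gamma\chi_{\ve M}^2+\int_{A_{\ve M}}|\phi|^2\Gamma(1-\chi_{\ve M}^2),
\]
the second piece is bounded by $C\|\phi\|^2_{L^2_{a,\gamma'}(A_{\ve M})}$, and the first piece, by $\Gamma_R\nearrow\Gamma$ and monotone convergence, is $\lim_R\int|\phi|^2\Gamma_R\chi_{\ve M}^2<\infty$. Hence $\phi\in L^2_{a,\gamma}(\R^3)$. The second case $\gamma<0$, $\gamma<\gamma'$ is identical after replacing $\eta_R(s)$ by $-\eta_R(-s)$, so that the cap is imposed on the very negative side of $(r-\rho_\tau)/\ve$ where the $\gamma$-weight blows up.

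The only delicate point is ensuring that the bound $|\nabla\Gamma_R|/\Gamma_R\le C\ve^{-1}$ is truly uniform in $R$ across the smoothed transition region of the cap; this is immediate from $0\le\eta_R'\le 1$. Once this is in place, the argument is a direct re-run of the computation already carried out in Remark \ref{rem concentration estimate}, with the sole novelty being the monotone passage $R\to\infty$.
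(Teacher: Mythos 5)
Your approach — truncate the target weight rather than the domain, run the energy estimate of Remark~\ref{rem concentration estimate} with the truncated weight, and pass to the limit by monotone convergence — is a genuinely different strategy from the paper's, which instead multiplies $\phi$ by a cutoff $\chi$ supported away from the tube, shows $g_1 = L_{w_\tau}(\chi\phi)\in L^2_{a,\gamma}(\R^3)$, solves $L_{w_\tau}\phi_{1,R}=g_1$ with Dirichlet conditions on bounded exhausting domains $\Omega_{\ve M, R}$, and identifies the limit with $\chi\phi$ by uniqueness. But your proof has a genuine gap at the very first step, and it is precisely the issue that the paper's bounded-domain device is designed to avoid.

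The gap is in the sentence ``so the hypothesis $\phi\in L^2_{a,\gamma'}(\R^3)$ guarantees $\int_{\R^3}|\phi|^2\Gamma_R<\infty$.'' The hypothesis of the lemma only requires $\gamma'<\gamma$, so $\gamma'$ may well be negative (and this is the relevant regime: one wants to upgrade a function that is a priori allowed to grow away from $D_\tau$ into one that decays). On the outer region $\{(r-\rho_\tau)/\ve\ge R+2\}$ your truncated weight is $\Gamma_R=(\cosh z)^{2a}e^{2\gamma(R+1)}$, which is bounded in $r$ but does not decay. The only available a priori bound is $\int|\phi|^2(\cosh z)^{2a}e^{\,2\gamma'(r-\rho_\tau)/\ve}<\infty$, and when $\gamma'<0$ the factor $e^{\,2\gamma'(r-\rho_\tau)/\ve}$ tends to zero as $r\to\infty$; it therefore does not control $\int_{\{r>\rho_\tau\}}|\phi|^2(\cosh z)^{2a}$, and $\int|\phi|^2\Gamma_R$ may be infinite. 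Without this finiteness neither the integration by parts in Step 1 nor the absorption in the Young step is legitimate. The same problem affects $\int|\nabla\phi|^2\Gamma_R$. Your argument is sound only under the extra assumption $\gamma'\ge 0$, which is strictly weaker than the lemma. To repair it you would either need to also truncate the domain (i.e., do exactly what the paper does: solve a Dirichlet problem on $\Omega_{\ve M,R}$ and exhaust $\R^3$), or truncate the weight differently, for instance taking $\Gamma_R=\min\bigl(\Gamma,\,C_R\,(\cosh z)^{2a}e^{\,2\gamma'(r-\rho_\tau)/\ve}\bigr)$ smoothed, so that $\Gamma_R$ is pointwise dominated by a multiple of the $\gamma'$-weight while still increasing to $\Gamma$, keeping the gradient ratio bound. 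As written, the proof is incomplete.
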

\begin{proof}
We follow the proof of a similar result in \cite{dkp-2009}.  Let $\chi$ be a cutoff function supported in the set $\ve M<r-\rho_\tau(z)$, $r^2=x_1^2+x_2^2$,  and such that $\chi\equiv 1$ in the set  $r-\rho_\tau(z)>2\ve M$ where $M$ is chosen so  that $f'(w_\tau)<-1$ for $r-\rho_\tau(z)>\ve M$. We calculate 
\[
L_{w_\tau} (\chi \phi)=\chi g+[\ve \Delta,\chi]\phi\equiv g_1, \quad [\ve\Delta,\chi]\phi=\ve\Delta(\chi\phi)-\ve\chi\Delta\phi.
\]
We have $\ve\nabla\chi=\mathcal O(1)$ and $\ve\Delta\chi=\mathcal O(\ve^{-1})$. Moreover, by local elliptic estimates applied to the equation
\[
\Delta \phi=\ve^{-1} g+\ve^{-2}f'(w_\tau)\phi
\]
we can show that (see Remark \ref{bootstrap})
\[
\|\ve\nabla\chi\cdot\nabla\phi\|_{L^2_{a,\gamma}(\R^3)}\leq C\ve^{-1}\|g\|_{L^2_{a, \gamma}(\R^3)}+C\ve^{-2}\|\phi\|_{L^2_{a, \gamma'}(\R^3)}.
\]
We find from this
\[
\|\ve\nabla\chi\cdot\nabla\phi\|_{L^2_{a,\gamma}(\R^3)}+\|\ve \phi\Delta\chi\|_{L^2_{a,\gamma}(\R^3)}\leq C\ve^{-1}\|g\|_{L^2_{a, \gamma}(\R^3)}\|+C\ve^{-2}\|\phi\|_{L^2_{a, \gamma'}(\R^3)}.
\] 
Above, we use the fact that the weighed norms $\|\cdot\|_{L^2_{a,\gamma}(\R^3)}$ and $\|\cdot\|_{L^2_{a,\gamma'}(\R^3)}$ are comparable in the set $r-\rho_\tau(z)\in [\ve M, 2\ve M]$.  
From this we obtain 
\[
\|g_1\|_{L^2_{a, \gamma}(\R^3)}\leq C\ve^{-1}\|g\|_{L^2_{a, \gamma}(\R^3)}+C\ve^{-2}\|\phi\|_{L^2_{a, \gamma'}(\R^3)}.
\]
Now we  solve the problem
\[
\begin{aligned}
L_{w_\tau} \phi_{1,R}&=g_1, \quad &\mbox{in}\ \Omega_{\ve M, R},\\
\phi_{1,R}&=0, \quad &\mbox{on}\ \Omega_{\ve M, R},
\end{aligned}
\]
in a bounded  set $\Omega_{\ve M, R}=\{\ve M<r-\rho_\tau(z)< R, |z|<R\}$. Using similar argument as the one leading to  (\ref{decay estimate}) in the proof of Lemma \ref{concentration lemma} we get
\[
\begin{aligned}
\|\phi_{1,R}\|_{H^1_{a, \gamma}(\Omega_{\ve M, R})}&\leq C\ve^{-1/2}\|g_1\|_{L^2_{a, \gamma}(\R^3)}\\
&\leq C\ve^{-3/2}\|g\|_{L^2_{a, \gamma}(\R^3)}+C\ve^{-5/2}\|\phi\|_{L^2_{a, \gamma'}(\R^3)}.
\end{aligned}
\] 
Note the that in the first of the above inequalities only the right hand side of the equation appears, which  is due to the fact that we assumed homogeneous Dirichlet boundary conditions on $\phi_{1,R}$ and we do not need to introduce the cut off function $\chi_{\ve M}$ in proving a version of  Lemma \ref{concentration lemma} needed here.
Letting $R\to \infty$ we get a solution $\phi_1$ of the equation $L_{w_\tau}\phi_1=g_1$ but now in the set $\ve M<r-\rho_\tau(z)$, such that 
\[
\|\phi_{1}\|_{H^1_{a, \gamma}(\R^3)}\leq C\ve^{-3/2}\|g\|_{L^2_{a, \gamma}(\R^3)}+C\ve^{-5/2}\|\phi\|_{L^2_{a, \gamma'}(\R^3)}.
\]
We also have $L_{w_\tau}(\chi\phi-\phi_1)=0$ and $\phi_1=\chi\phi=0$ along the surface $\ve M=r-\rho_\tau(z)$. Then, an estimate similar to  (\ref{decay estimate}), shows that actually $\chi\phi=\phi_1$. Similar argument applied in the set $-\ve M> r-\rho_\tau(z)$  ends the proof.
\end{proof}

\subsection{The deficiency space and the kernel of $L_{w_\tau}$}

Let us summarize our results so far. Let $g\in L^2_{a, \gamma}(\R^3)$ with $a^2+\gamma^2<\delta_\tau$, and cutoff functions $\chi^\pm(z)$ such that $\chi^+(z)+\chi^-(z)=1$ and $\mathrm{supp}\, \chi^+=[-1, \infty)$ be given.
\begin{itemize}
\item[(i)] As in  Proposition \ref{prop isomor} we can   solve 
\[
L_{w_\tau}\phi^\pm=   \chi^\pm g 
\]
where $\phi^\pm \in H^2_{-|a|, \gamma}(\R^3)_\pm$ (except for a finite set of $a$). 
\item[(ii)]
If we have $L_{w_\tau}\phi=g$, $\phi \in L^2_{a, \gamma'}(\R^3)$, $g\in L^2_{a, \gamma}(\R^3)$ with $\gamma>0$ and $\gamma'<\gamma$ then $\phi \in H^2_{a, \gamma}(\R^3)$. In particular if $g$ is decaying exponentially away from the surface $D_\tau$,  so that  we have $g\in L^2_{a, \gamma}(\R^3)\cap L^2_{a, -\gamma}(\R^3)$  then $\phi\in H^2_{a, \gamma}(\R^3)\cap H^2_{a, -\gamma}(\R^3)$. This means that the decay rate of the solution away from the nodal set improves together with the rate of decay of the right hand side. 

\item[(iii)] When the right hand side  decays both  along the nodal set and in the direction transversal to it, for example $g\in L_{a, \gamma}^2(\R^3)\cap  L^2_{a, -\gamma}(\R^3)$,  with $a>0$,  then we can use the parametrix to solve the equation $L_{w_\tau}\phi^+=\chi^+ g$ and determine a solution $\phi^+$ such that $\chi^+\phi^+\in  L^2_{a, \gamma}(\R^3)_+\cap  L^2_{a, -\gamma}(\R^3)_+ $. At the same time we can find another solution  $\phi_1^+$, such that    $\chi^+\phi^+_1\in L_{-a, \gamma}^2(\R^3)_+\cap  L^2_{-a, -\gamma}(\R^3)_+$ and we get the following decomposition:
\[
\phi^+=\sum_{j=1}^k Z^+_j+\phi^+_1,
\] 
where $Z^+_j$ are in the kernel of the operator $L_{w_\tau}$. Then we have
\[
\phi^+=\chi^+\phi^++\chi^-\phi^+=\chi^+\phi^++\chi^-\phi_1^++\sum_{j=1}^k Z^+_j\chi^-,
\]
where $(\chi^+\phi^++\chi^-\phi_1^+)\in  H^2_{a, \gamma}(\R^3)$. 
Of course we can argue similarly for the equation   $L_{w_\tau}\phi^-=\chi^- g$ and thus at the end we get the following formula
\[
\phi=\phi_0+\sum_{j=1}^k\chi^- Z^+_j+\sum_{j=1}^k \chi^+Z^-_j, \]
where $\phi_0\in H^2_{a, \gamma}(\R^3)$. This is the so called {\it linear decomposition formula}. It says that any solution to $L_{w_\tau}\phi = g$ can be decomposed into an exponentially decaying part and and a linear combination of $2k$  functions which are related to the residues of $\hat {L}^{-1}_{w_\tau,\gamma}(\zeta)$ at its poles. We say that these functions belong to the {\it deficiency space}. Clearly the elements of the kernel of $L_{w_\tau}$ (which is $k$ dimensional) belong to the deficiency space and thus removing them from it we obtain a space on which $L_{w_\tau}$ is an isomorphism (see Lemma (\ref{lemma isomor}) below). 
\end{itemize}  

Before stating precisely the next Lemma we introduce weighted Sobolev spaces
\[
\bar L_{a,\gamma}(\R^3):= L^2_{a, \gamma}(\R^3)\cap  L^2_{a, -\gamma}(\R^3), \quad \bar H^s_{a,\gamma}(\R^3):= H^s_{a, \gamma}(\R^3)\cap  H^s_{a, -\gamma}(\R^3).
\]
Note that $\phi\in  \bar L_{a,\gamma}(\R^3)$ decays away from $D_\tau$ as $\cosh^{-\gamma}\left(\frac{r-\rho_\tau(z)}{\ve}\right)$ if $\gamma>0$, and decays (for $a>0$) or grows (for $a<0$) along $D_\tau$ at the rate $\cosh^{-a} z$.  Based on observations (i)--(iii) we have:

\begin{lemma}\label{lemma isomor}
Let $\gamma>0$, $a > 0$,   with $a^2+\gamma^2<\delta_\tau$ and let us define the deficiency space
\[
\mathcal D_{w_\tau}=\mathrm{span} \left\{\chi^+ Z_j, \chi^- Z_j, j=1, \dots k, Z_j\in \mathrm{Ker}\, L_{w_\tau}\right\}.
\]
We further  decompose $\mathcal D_{w_\tau}=\mathcal K_{w_\tau}\oplus \mathcal E_{w_\tau}$, where $\mathcal K_{w_\tau}=\mathrm{Ker}\, L_{w_\tau}$. Then the operator  
\[
\begin{aligned}
L_{w_\tau}\colon \bar H^2_{a, \gamma}(\R^3)\oplus \mathcal E_{w_\tau} &\longrightarrow \bar L^2_{a, \gamma}(\R^3)\\
\phi&\longmapsto L_{w_\tau}\phi.
\end{aligned}
\]
is an isomorphism. 
\end{lemma}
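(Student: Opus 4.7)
The plan is to verify the two sides of the isomorphism separately, using the linear decomposition formula from observations (i)--(iii) for surjectivity and the direct sum decomposition $\mathcal D_{w_\tau} = \mathcal K_{w_\tau} \oplus \mathcal E_{w_\tau}$ for injectivity. The whole argument rests on the parametrix constructed in Proposition \ref{prop isomor}, the transversal improvement of Lemma \ref{decay in r}, and a structural fact about residues of $\hat L_{w_\tau}^{-1}$.

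For surjectivity, given $g\in \bar L^2_{a,\gamma}(\R^3)$, I would split $g=\chi^+g+\chi^-g$ and apply Proposition \ref{prop isomor} to each piece to obtain half-line solutions of $L_{w_\tau}\phi^\pm=\chi^\pm g$. Because $g$ decays transversally on both sides of $D_\tau$, Lemma \ref{decay in r} upgrades the transversal decay of $\phi^\pm$ to $\bar H^2_{\cdot,\gamma}$-type. In the Fourier--Laplace inversion formula for each half, the contour of integration can be shifted past the finite set of poles of $\hat L_{w_\tau}(\zeta)^{-1}$ in the strip $|\mathrm{Im}\,\zeta|<T_\tau a$; the two resulting solutions differ by sums of residues, each residue belonging to $\mathrm{Ker}\,L_{w_\tau}$. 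Choosing, in each half, the contour that produces decay $e^{-a|z|}$ in the appropriate direction and gluing via $\phi=\chi^+\phi^++\chi^-\phi^-$ yields the decomposition
\[
\phi = \phi_0 + \sum_{j}\bigl(c_j^-\chi^- Z_j+c_j^+\chi^+ Z_j\bigr),\qquad \phi_0\in\bar H^2_{a,\gamma}(\R^3),
\]
with the tail in $\mathcal D_{w_\tau}$. Subtracting the $\mathcal K_{w_\tau}$-component of the tail (legal because $L_{w_\tau}$ annihilates it) leaves a representative in $\bar H^2_{a,\gamma}(\R^3)\oplus \mathcal E_{w_\tau}$ that still solves $L_{w_\tau}\phi=g$.

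For injectivity, suppose $L_{w_\tau}(\phi_0+\eta)=0$ with $\phi_0\in\bar H^2_{a,\gamma}$ and $\eta\in\mathcal E_{w_\tau}$. Then $\phi_0+\eta\in\mathrm{Ker}\,L_{w_\tau}=\mathcal K_{w_\tau}$, so there is $\kappa\in\mathcal K_{w_\tau}$ with $\phi_0=\kappa-\eta\in \mathcal K_{w_\tau}+\mathcal E_{w_\tau}=\mathcal D_{w_\tau}$. The crux is the identity $\mathcal D_{w_\tau}\cap \bar H^2_{a,\gamma}(\R^3)=\{0\}$: each nontrivial linear combination $\sum c_j^\pm\chi^\pm Z_j$ inherits from the residue representation of $\hat L_{w_\tau}^{-1}$ an exponential asymptotic profile $e^{\,\nu^* z}$ at $|z|\to\infty$ coming from a pole $\zeta^*=\mu^*+iT_\tau\nu^*$ with $|\nu^*|<a$, and such a profile cannot coexist with the strict decay rate $a$ required by $\bar H^2_{a,\gamma}$ at either end unless the two halves recombine into a full kernel element $Z_j=\chi^+Z_j+\chi^-Z_j$. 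That reassembly produces an element of $\mathcal K_{w_\tau}$, not of the complement $\mathcal E_{w_\tau}$. Therefore $\phi_0=0$, and the remaining relation gives $\eta=\kappa\in \mathcal K_{w_\tau}\cap \mathcal E_{w_\tau}=\{0\}$.

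The main obstacle I expect is making the triviality $\mathcal D_{w_\tau}\cap \bar H^2_{a,\gamma}(\R^3)=\{0\}$ rigorous: one has to verify that the cutoff halves $\chi^+Z_j$ and $\chi^-Z_j$ are genuinely linearly independent modulo $\mathcal K_{w_\tau}$ and that no accidental cancellation among the residue contributions at different poles can produce bilateral decay at rate $a$. This is where the hypothesis that $a$ avoid the exceptional set $\mathcal S_0$ of Proposition \ref{prop isomor} enters; it ensures that $\hat L_{w_\tau}^{-1}$ is holomorphic on the contour $\mathrm{Im}\,\zeta=\pm T_\tau a$, so the residue data classifying $\mathcal D_{w_\tau}$ lie strictly inside the strip and their exponential rates are strictly smaller than $a$, giving the required incompatibility with the norm of $\bar H^2_{a,\gamma}(\R^3)$.
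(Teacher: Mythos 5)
Your proposal is essentially the paper's intended argument: the paper states this lemma as an immediate consequence of observations (i)--(iii) without an explicit proof, and you have correctly reconstructed the logic (parametrix from Proposition \ref{prop isomor}, transversal upgrade from Lemma \ref{decay in r}, the linear decomposition formula, and the triviality of $\mathcal D_{w_\tau}\cap \bar H^2_{a,\gamma}(\R^3)$). Your identification of the need for $a\notin\mathcal S_0$ is also correct and indeed is implicit in the paper's statement.

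One small slip worth flagging: the gluing $\phi=\chi^+\phi^++\chi^-\phi^-$ does not solve $L_{w_\tau}\phi=g$ (the squares $(\chi^\pm)^2$ don't sum to $1$ and there are commutator terms). What you want is simply $\phi=\phi^++\phi^-$, since $L_{w_\tau}\phi^\pm=\chi^\pm g$ and $\chi^++\chi^-\equiv 1$; the cutoffs $\chi^\pm$ then enter only in the subsequent rewriting of each $\phi^\pm$ as $(\chi^+\phi^++\chi^-\phi_1^+)+\sum_j\chi^- Z^+_j$ (and analogously for $\phi^-$), exactly as in observation (iii). The final decomposition you write down is nevertheless the correct one, so the argument goes through once this is repaired.
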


}}

Note that $\mathrm{dim}\,\mathcal E_{w_\tau}=k=\mathrm{dim}\, \mathcal K_{w_\tau}$ and  that we know already that $k\geq  6=\mathrm{dim}\, \mathcal I_{w_\tau}$ where the linear subspace $\mathcal I_{w_\tau}$ was defined in (\ref{def kernel}).  We will show next that indeed $k=6$. 

\begin{proposition}\label{lemma kernel}
We have $\mathcal K_{w_\tau}=\mathcal I_{w_\tau}$.
\end{proposition}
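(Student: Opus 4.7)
Since $\mathcal I_{w_\tau}\subseteq\mathcal K_{w_\tau}$ and $\dim\mathcal I_{w_\tau}=6$, the plan is to show $\dim\mathcal K_{w_\tau}\le 6$ by constructing an injective linear map from $\mathcal K_{w_\tau}$ into the $6$-dimensional space $\mathcal Y$ of temperate-growth Jacobi fields of $\mathcal J_{D_\tau}$, which is known (Section \ref{sec delaunay}) to be spanned by $\{\Phi_\tau^{T,{\tt e}_j},\Phi_\tau^{R,{\tt e}_i},\Phi_\tau^D\}$.

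Given $\phi\in\mathcal K_{w_\tau}$, by the deficiency-space description $\phi$ decays exponentially transverse to $D_\tau$ (this is why Lemma \ref{decay in r} is needed), so setting $\Phi=\chi_{\ve/\delta}(\st)Y^*_\ve\phi$ on $\pD\times\R$ (extending periodically; the same argument works on $D_\tau\times\R$ after dealing with growth in $\sy_3$) yields
\[
\mathbb L_{w_\tau}\Phi=R_\ve(\phi),
\]
where $R_\ve(\phi)$ is supported in $\delta/(2\ve)\le|\st|\le\delta/\ve$ and is exponentially small in $\ve$, exactly as in the derivation of \eqref{till}. I would then decompose $\Phi=\Phi^\parallel+\Psi{\tt V}$, project onto ${\tt V}$ using the identity \eqref{expr v3}, and project onto $\mathcal X_\gamma$ to isolate $\Phi^\parallel$. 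Coercivity (Lemma \ref{coerc}) controls $\|\Phi^\parallel\|_{H^1}\le C\ve\|\Psi\|_{H^1}+C\|R_\ve\|_{L^2}$ as in \eqref{fl 5}, and substituting back into the ${\tt V}$-projection produces a perturbed Jacobi equation
\[
(\mathcal J_{D_\tau}+\widetilde{\mathcal R}_\ve)\Psi=\widetilde r_\ve,
\]
with $\widetilde{\mathcal R}_\ve$ a differential operator of size $O(\ve+\delta)$ and $\widetilde r_\ve$ exponentially small in $\ve$. This parallels the derivation of \eqref{psi0}--\eqref{psi00}.

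The function $\Psi$ inherits from $\phi$ temperate growth along the axis of $D_\tau$. Since $\mathcal J_{D_\tau}$ restricted to temperate functions is Fredholm with $6$-dimensional kernel $\mathcal Y$, and its restriction to $\mathcal Y^\perp$ is an isomorphism, a standard perturbation argument (writing $\Psi=\Psi_0+\Psi_1$ with $\Psi_0\in\mathcal Y$, $\Psi_1\in\mathcal Y^\perp$, then inverting $P_{\mathcal Y^\perp}(\mathcal J_{D_\tau}+\widetilde{\mathcal R}_\ve)$ on $\mathcal Y^\perp$ for $\ve$ small) gives
\[
\|\Psi_1\|\le C(\ve+\delta)\|\Psi_0\|+C\|\widetilde r_\ve\|.
\]
This defines a linear map $\Pi\colon\mathcal K_{w_\tau}\to\mathcal Y$ by $\phi\mapsto\Psi_0$.

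By Lemma \ref{lemma asymptot kernel}, on $\mathcal I_{w_\tau}$ the map $\Pi$ sends, up to the invertible factor $\ve^{-1}$, the generators $\partial_{h_j}\Phi,\partial_{\vartheta_i}\Phi,\partial_\eta\Phi\big|_{0}$ to $\Phi_\tau^{T,{\tt e}_j},\Phi_\tau^{R,{\tt e}_i},\Phi_\tau^D$, so $\Pi|_{\mathcal I_{w_\tau}}\colon\mathcal I_{w_\tau}\to\mathcal Y$ is a bijection. It remains to show $\Pi$ is injective on all of $\mathcal K_{w_\tau}$: if $\Pi(\phi)=0$ then $\Psi$ is exponentially small in $\ve$, hence by coercivity so is $\Phi^\parallel$, hence $Y^*_\ve\phi$ is exponentially small on $\mathcal N_\delta$; together with the transverse exponential decay of $\phi$ this forces $\phi$ to be exponentially small globally in the relevant weighted norm. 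Since $\mathcal K_{w_\tau}$ is finite-dimensional, a normalization/contradiction argument on a sequence $\phi_\ve\in\mathcal K_{w_\tau}\setminus\mathcal I_{w_\tau}$ with $\|\phi_\ve\|=1$ then excludes the existence of such $\phi$ for $\ve$ small, yielding $\Pi$ injective, $\dim\mathcal K_{w_\tau}\le 6$, and therefore $\mathcal K_{w_\tau}=\mathcal I_{w_\tau}$.

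The hard part is the last injectivity step: the coercivity estimate says $\Phi^\parallel$ is small \emph{relative} to $\Psi$, so one really needs to quantify the smallness of $\Psi$ (via the pole structure of $\hat L_{w_\tau}^{-1}(\zeta)$ and $R_\ve(\phi)=O(e^{-c\delta/\ve})$) precisely enough to beat the ambient norm of $\phi$; the cleanest way is a compactness/contradiction scheme in which a normalized non-geometric sequence $\phi_\ve$ would produce a limiting temperate solution of $\mathcal J_{D_\tau}\Psi=0$ outside $\mathcal Y$, contradicting the classification result of \cite{mpp}.
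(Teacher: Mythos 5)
Your approach is essentially the paper's: cut off $Y^*_\ve\phi$ to $\mathcal N_\delta$, decompose against $\tt V$ into $\Psi{\tt V}+\Phi^\parallel$, control $\Phi^\parallel$ via coercivity (Lemma~\ref{coerc}) and the Fourier--Laplace machinery, project onto $\tt V$ to obtain a perturbed Jacobi equation on $D_\tau$, and exploit the classification of temperate Jacobi fields of $\mathcal J_{D_\tau}$. The paper carries this out as a direct quantitative contradiction rather than by constructing a map $\Pi$, but that is a cosmetic repackaging.

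Where you leave a genuine gap is exactly where you flag it: the ``injectivity'' step. Two pieces are missing. First, before you can normalize $\Psi$ (or $\psi$) you must know it is nonzero; the paper proves this separately as Lemma~\ref{lemma 4.5}, which shows that any $\phi\in\mathcal K_{w_\tau}$ whose weighted fiberwise projection onto $\tt V$ vanishes identically must be $\phi\equiv 0$. This is proved by an ODE/comparison argument on $h(z)=\int_{\R^2}\phi^2\,dx$ using a one-dimensional coercivity estimate (Lemma~\ref{one d est}); without this ingredient your decomposition cannot be anchored. Second, your injectivity argument as written concludes only that ``$\phi$ is exponentially small globally,'' but for a fixed small $\ve$ that is not the same as $\phi=0$, so it does not establish $\ker\Pi=0$. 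The paper instead picks $\phi\in\mathcal K_{w_\tau}$ satisfying the explicit orthogonality~\eqref{orto phi} against $\mathcal I_{w_\tau}$ (available precisely when $\dim\mathcal K_{w_\tau}>6$), normalizes $\|\psi\|_{L^2_{a_*}(D_\tau)}=1$, and then shows both $\psi_0$ and $\psi_1$ are $O(\ve^{1-\alpha}+\delta)$ (using the linear decomposition formula for $\mathcal J_{D_\tau}$, the estimate~\eqref{equ 104} for $\varphi^\parallel$, and Lemma~\ref{lemma asymptot kernel} to transfer the orthogonality from the $w_\tau$-Jacobi fields to the $D_\tau$-Jacobi fields), yielding $1\le C(\ve^{1-\alpha}+\delta)$, a contradiction. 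Your proposed ``compactness/contradiction scheme in $\ve$'' would require comparing kernels and norms across varying $\ve$, which is delicate and not actually needed: the contradiction is purely quantitative for each fixed small $\ve$. So you have the right skeleton, but the two load-bearing steps --- the analog of Lemma~\ref{lemma 4.5} and the quantitative use of the orthogonality~\eqref{orto phi} to beat the normalization --- still need to be supplied.
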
 
\begin{proof}[Proof of Proposition \ref{lemma kernel}]
The idea of the proof is to relate the kernel of the operator $L_{w_\tau}$  with the space of the Jacobi fields of the operator $\mathcal J_{D_\tau}$ that is explicitly known and in particular its dimension is $6$. Let us consider a  $\phi\in \mathcal K_{w_\tau}$. {\it A priori} it may happen that $\phi$ is exponentially increasing in the $z$ variable but we know already (see the argument leading to (\ref{est h2l2}) an also   Lemma \ref{decay in r} and Remark \ref{rem concentration estimate}) that it must be decaying at least like $\cosh^{-\gamma} (\frac{r-\rho_\tau(z)}{\ve})$ with some $\gamma>0$.  In particular all integrations  with respect to the transversal direction to $D_\tau$ that will appear below are justified.

Next, we note that  formula (\ref{expr v3}) suggests that near the surface $D_\tau$ the elements  of $ \mathcal K_{w_\tau}$ should be proportional, asymptotically as $\ve \to 0$,   to  ${\tt V}$ times a function on $D_\tau$. To make this rigorous we first prove  the following: 
\begin{lemma}\label{lemma 4.5}
Let $\phi\in \mathcal K_{w_\tau}$ be such that 
\begin{equation}\label{lem 4.5.1}
\int_\R (Y^*_{\ve}\phi)(\sy, \st)  {\tt V}(\sy,\st ) \chi_{\ve/\delta}(\st)\, d{\st}=0, \quad \forall \sy\in D_\tau. 
\end{equation}
Then we have $\phi\equiv 0$. 
\end{lemma}
\begin{proof}[Proof of Lemma \ref{lemma 4.5}]
As we have pointed out it is not hard to show that $\phi$ decays exponentially like $\cosh^{-\gamma}\big(\frac{r-\rho_\tau(z)}{\ve})$ and so we can compute
\[
\int_{\R^2} \phi^2({x}, z)\,d{x}=h(z), \qquad {x}=(x_1, x_2). 
\]
Direct calculation shows:
\begin{equation}
\label{h est}
\frac{\ve}{2}\frac{d^2 h}{d z^2}=\int_{\R^2}\big[\ve |\nabla_{x} \phi|^2-\frac{1}{\ve}f'(w_\tau) \phi^2\big]\,d{x}+\ve\int_{\R^2} |\partial_z \phi|^2\, d{x}.
\end{equation}
We claim that the orthogonality condition (\ref{lem 4.5.1}) implies 
\begin{equation}\label{alman}
\int_{\R^2}\big[\ve |\nabla_{x} \phi|^2-\frac{1}{\ve}f'(w_\tau) \phi^2\big]\,d{x}\geq \frac{\kappa}{\ve}\int_{\R^2} \phi^2({x}, z)\,d{x}=\frac{\kappa}{\ve} h,
\end{equation}
with some constant $\kappa>0$. To prove this claim we need:
\begin{lemma}\label{one d est}
There exists a constant $\kappa>0$ such that for any sufficiently large $R$ and  any  $v\in H^1((-R, R))$ it holds
\[
\int_{-R}^R |v'|^2 - f'(\varTheta) v^2\geq \kappa \int_{-R}^R v^2 \quad \mbox{whenever} \quad \int_{-R}^R v\varTheta'\chi_R=0,
\]
where $\chi_R$ is a smooth cutoff function supported in $(-R, R)$ such that $\chi_R(x)=1$ in $(-R/2,R/2)$. 
\end{lemma}
A proof of this Lemma (using  for instance (\ref{coerc zero form}) as a point of departure) is omitted. 

Changing  to Fermi coordinates  we have in $\mathcal N_\delta$: 
\begin{equation}
\label{grad fer}
\ve|\nabla_{{x}} \phi|^2=\frac{1}{\ve}|\partial_{\st}Y^*_{\ve}\phi|^2+\mathcal O(\ve)|\partial_{s}Y^*_{\ve}\phi|^2+\mathcal O(\ve)|\partial_\theta Y^*_{\ve}\phi|^2.
\end{equation}
Next, for a fixed $z$ we consider a diffeomorphism  $(x_1, x_2)\mapsto (\theta, \st)$ defined by
\[
x_j=\left(X(s,\theta)+\ve \st N_\tau(s,\theta)\right)\cdot {\tt e}_j
\]
where $s=s(\theta, \st;z)$ is determined from
\[
z= \left(X(s,\theta)+\ve \st N_\tau(s,\theta)\right)\cdot {\tt e}_3.
\]
The Jacobian matrix of this map can can be  calculated explicitly but for our purpose it is enough to note that 
\[
dx_1 dx_2=\ve \mu_0(\theta)\,d\theta d\st+\ve^2\st\mu_1(\theta, \st)\,d\theta d\st,
\]
where $\mu_0$, $\mu_1$ are positive densities  and 
\[
|\mu_1(\theta, \st)|\leq C.
\]
From this we find
\begin{equation}\label{3int}
\begin{aligned}
\int_{\R^2}\big[\ve |\nabla_{x} \phi|^2-\frac{1}{\ve}f'(w_\tau) \phi^2\big]\,d{x}&\geq \int_0^{2\pi} \left\{\int_{|\st|\leq \delta/\ve}\left[|\partial_{\st}Y^*_{\ve}\phi|^2-f'\left(\varTheta(\st)\right)|Y^*_{\ve}\phi|^2\right]\,d\st\right\}\,\mu_0 d\theta\\
&\qquad +\int_{\R^2\setminus \mathcal N_\delta}\left[\ve |\nabla_{x} \phi|^2+\frac{1}{\ve} \phi^2\right]\,d{x}\\
&\qquad 
-K(\delta+\ve)\int_{\R^2}\left[\ve |\nabla_{x} \phi|^2+\frac{1}{\ve} \phi^2\right]\,d{x}
\end{aligned}
\end{equation}
The potential $f'(w_\tau)$    in the first line on the left can be replaced by $f'(\varTheta)$ on the right of this line since $Y^*_\ve w_\tau=\varTheta+\mathcal O(\ve)$.  The term in the second line above appears because $f'(w_\tau)<-2+\eta$ in the complement of $\mathcal N_\delta$. Finally, all the other terms are of smaller size and can be controlled by the integral in the third line times $K(\delta+\ve)$, where $K$ is a constant.  Using Lemma \ref{one d est} and going back to the original variables we get
\[
\int_0^{2\pi} \left\{\int_{|\st|\leq \delta/\ve}\left[|\partial_{\st}Y^*_{\ve}\phi|^2-f'\left(\varTheta(\st)\right)|Y^*_{\ve}\phi|^2\right]\,d\st\right\}\,\mu_0 d\theta \geq \frac{C}{\ve} \int_{\mathcal N_\delta}\phi^2\,dx.
\]
It follows 
\[
\int_{\R^2}\big[\ve |\nabla_{x} \phi|^2-\frac{1}{\ve}f'(w_\tau) \phi^2\big]\,d{x}
\geq  \frac{C}{\ve} \int_{\R^2}\phi^2\,dx -K(\ve+\delta)  \int_{\R^2}\left[\ve |\nabla_{x} \phi|^2+\frac{1}{\ve} \phi^2\right]\,d{x},
\]
hence
\[
[1+K(\ve+\delta)]\int_{\R^2}\big[\ve |\nabla_{x} \phi|^2-\frac{1}{\ve}f'(w_\tau) \phi^2\big]\,d{x}
\geq  \frac{C}{\ve} \int_{\R^2}\phi^2\,dx-\frac{K(\ve+\delta)}{\ve}\int_{\R^2}[1+f'(w_\tau)] \phi^2\,dx
\]
which gives (\ref{alman}) provided that $\ve$ and $\delta$ are small enough.

From (\ref{alman}) and (\ref{h est}) we find
\[
\frac{\ve}{2}\frac{d^2 h}{d z^2}-\frac{\kappa}{\ve}h>0.
\]
By Lemma \ref{lemma isomor} we know {\it a priori} that $\phi$, hence $h$, is growing  in $z$ at $\pm\infty$  at some exponential rate which is independent on $\ve$. Applying  the comparison principle   we see  that $h$, and hence $\phi$, is actually decaying as $z\to \pm \infty$,  at some exponential rate proportional to $\ve^{-1}$.  Using again  orthogonality condition (\ref{lem 4.5.1}) we  calculate 
\[
\langle-L_{w_\tau} \phi, \phi\rangle=\int_{\R^3} \left[\ve |\nabla \phi|^2-\frac{1}{\ve}f'(w_\tau) \phi^2\right]\, d{x} dz\geq c\ve^{-1}\|\phi\|^2_{L^2(\R^3)},
\]
hence $\phi\equiv 0$ as claimed. {\blue{This ends the proof of the Lemma.}}

\end{proof}

We continue with the proof of the Proposition. For a given  $\phi\in\mathcal K_{w_\tau} $  we  define 
\[
\varphi=  (Y^*_\ve \phi)\chi_{\delta/\ve}.
\]
The function $\varphi$ is a cutoff of $Y^*_\ve \phi$ and is supported  in $\mathcal N_{\delta}$. Since $\phi\in\bar H^2_{a, \gamma}(\R^3)$ with some $a\in \R$, and $\gamma>0$ both small  ($\phi$ decays or grows in $z$ like $\cosh^{-a} z$,  and it decays like $\cosh^{-\gamma}\big(\frac{r-\rho_\tau(z)}{\ve}\big)$ away from $D_\tau$) we have that $\varphi\in H^2_{a_*, \gamma_*}(D_\tau \times \R)$ with some $a_*\in \R$ and $\gamma_*>0$ both small. We also have
\[
\|\varphi\|_{\bar L^2_{a_*, \gamma_*}(D_\tau\times \R)}\leq C\ve^{-1/2}\|\phi\|_{\bar L^2_{a, \gamma}(\R^3)},
\]
with similar estimates for other Sobolev norms. Note that since $\phi$  decays like $\cosh^{-\gamma}\big(\frac{r-\rho_\tau(z)}{\ve}\big)$ away from $D_\tau$ then $\varphi$ decays at least like $\cosh^{-\bar \gamma} \st$ with some $\bar\gamma >0$. Above estimate holds then for any  $\gamma_*<\bar \gamma$ and  we will consider only $\gamma_*$ restricted this way.

In what follows we will argue by contradiction and we will assume that $\mathrm{dim}\, \mathcal K_{w_\tau}>6$. Since we know explicitly six linearly independent elements in $\mathcal K_{w_\tau}$, which are the geometric Jacobi fields spanning the subspace $\mathcal I_{w_\tau}$ defined in (\ref{def kernel}) we can find a function  $\phi\in \mathcal K_{w_\tau}$ such that $\phi\notin \mathcal I_{w_\tau}$ and in particular we can assume 
\begin{equation}
\label{orto phi}
\int_{D_\tau\times \R}\chi_{\delta/\ve} (Y^*_\ve \phi) (Y^*_\ve \Phi_\bullet)\cosh^{a_*}(s)\,dV_{D_\tau}d\st=0, \qquad \forall \Phi_\bullet \in \mathcal I_{w_\tau}.
\end{equation}

We decompose 
\[
\varphi=\psi{\tt V} +\varphi^\parallel, \qquad \int_\R \chi_{\ve/\delta}\varphi^\parallel(\sy,\st) {\tt V}(\sy,\st )\, d\st=0.
\]
From Lemma \ref{lemma 4.5} we know that $\psi\neq 0$ and therefore we can assume $\|\psi\|_{L^2_{a^*}(D_\tau)}=1$ (indeed we expect  
$\|\varphi^\parallel\|_{L^2_{a_*, \gamma_*}(D_\tau\times \R)}=o(1)$).  We compute
\[
\mathbb L_{w_\tau}\varphi=(Y^*_\ve L_{w_\tau})\varphi+[\mathbb L_{w_\tau}-(Y^*_\ve L_{w_\tau})]\varphi\equiv g,
\]
where, more explicitly,
\[
\begin{aligned}
(Y^*_\ve L_{w_\tau})\varphi&=\ve^{-1}[(Y^*_\ve \phi)\partial_{\st\st}\chi_{\ve/\delta}+2\partial_\st (Y^*_\ve \phi)\partial_\st \chi_{\ve/\delta}]-(H_{D_\tau}+\ve \st |A_{D_\tau}|^2 +\mathbb Q_\ve) (Y^*_\ve \phi)\partial_\st \chi_{\ve/\delta}\\
[\mathbb L_{w_\tau}-(Y^*_\ve L_{w_\tau})]\varphi&=(1-\chi_{\ve/\delta})(\ve\st |A_{D_\tau}|^2+\mathbb Q_\ve)\partial_\st \varphi-\ve (1-\chi_{\ve/\delta})\mathbb A_\ve \varphi.
\end{aligned}
\]

It is not hard to see that
\[
\|\chi_{\ve/\delta}g\|_{\bar L^2_{a_*, \gamma_*} (D_\tau\times \R)}\leq \mathcal O(e^{\,-c\delta/\ve})\|\varphi\|_{\bar H^2_{a_*, \gamma_*} (D_\tau\times \R)},
\]
since $\gamma_*<\bar \gamma$.
Using this we can  calculate
\[
\int_\R \chi_{\ve/\delta} \mathbb L_{w_\tau}\varphi {\tt V}\,d\st=\int_\R \chi_{\ve/\delta} g\varphi {\tt V}\,d\st
\]
which gives
\begin{equation}
\label{eq for psi}
\mathcal J_{D_\tau} \psi=T(\varphi^\parallel, \psi), 
\end{equation}
where $T$ is a linear operator satisfying
\begin{equation}
\label{op t}
\|T(\varphi^\parallel, \psi)\|_{L^2_{a_*}(D_\tau)}\leq C\ve^{-1}\|\varphi^\parallel\|_{\bar L^2_{a_*, \gamma_*}(D_\tau \times \R)}+C\ve\|\varphi^\parallel\|_{\bar H^2_{a_*, \gamma_*}(D_\tau \times \R)} +C\ve^{1-\alpha}\|\psi\|_{H^1_{a_*}(D_\tau)}+C\delta\|\psi\|_{H^2_{a_*}(D_\tau)},
\end{equation}  
with some $\alpha\in (0,1)$. Next we will estimate $\varphi^\parallel$. Since this argument is similar to that of Proposition \ref{prop isomor} we will outline the main points omitting some tedious but  straightforward calculations. Let $K>0$ be a large constant and $\chi^\pm\colon \R\to \R_+$ be smooth cutoff functions such that $\chi^++\chi^-\equiv 1$, $\chi^+(s)=1$ when $s>1$ and $\chi^+(s)=0$ when $s<-K$ and additionally $K|\chi^\pm_s|+K^2|\chi^\pm_{ss}|\leq C$. 

We define $\varphi^{\parallel, \pm}=\chi^\pm \varphi^\parallel$. Taking the Fourier-Laplace transform (with respect to $s$)  we get
\[
\begin{aligned}
{(\mathbb L_{w_\tau}\varphi^{\parallel, \pm})}^\wedge&=(\chi^\pm \mathbb L_{w_\tau}\varphi^{\parallel})^\wedge+\left(\left[\mathbb L_{w_\tau}, \chi^\pm\right]\varphi^\parallel\right)^\wedge\\
&=(\chi^\pm g)^\wedge-\left(\chi^\pm \mathbb L_{w_\tau}(\psi {\tt V})\right)^\wedge+\left(\left[\mathbb L_{w_\tau}, \chi^\pm\right]\varphi^\parallel\right)^\wedge.
\end{aligned}
\]
We can  project
\begin{equation}
\label{equ 102}
\begin{aligned}
\int_{[0,T_\tau]\times [0, 2\pi]\times  \R}  \hat \varphi^{\parallel,\pm} {(\mathbb L_{w_\tau}\varphi^{\parallel, \pm})}^\wedge&=
\int_{[0,T_\tau]\times [0, 2\pi]\times\R} \hat \varphi^{\parallel,\pm}(\chi^\pm g)^\wedge\\
&\qquad -\int_{[0,T_\tau]\times[0, 2\pi]\times \R} \hat \varphi^{\parallel,\pm}\left(\chi^\pm \mathbb L_{w_\tau}(\psi {\tt V})\right)^\wedge\\
&\qquad +\int_{[0,T_\tau]\times [0, 2\pi]\times\R} \hat \varphi^{\parallel,\pm}\left(\left[\mathbb L_{w_\tau}, \chi^\pm\right]\varphi^\parallel\right)^\wedge
\end{aligned}
\end{equation}
Since we have
\[
\int_\R  \chi_{\ve/\delta} \hat \varphi^{\parallel,\pm} {\tt V}\,d\st=0,
\]
therefore the bilinear form on the left hand side in (\ref{equ 102}) is positive definite and  by an argument similar to the one in Proposition \ref{prop isomor} we get
\[
\left|\int_{[0,T_\tau]\times [0, 2\pi]\times  \R}  \hat \varphi^{\parallel,\pm} {(\mathbb L_{w_\tau}\varphi^{\parallel, \pm})}^\wedge\right| \geq \frac{C}{\ve}\|\hat \varphi^{\parallel,\pm} \|^2_{L^2([0,T_\tau]\times [0, 2\pi]\times  \R)}\geq \frac{C}{\ve}\|\varphi^{\parallel,\pm} \|^2_{L^2_{a_*}(D_\tau\times \R)_\pm},
\]
where the last inequality follows from Plancherel's identity. 
Using Cauchy-Schwarz inequality and Plancherel identity again on the right hand side of (\ref{equ 102}) we find 
\[
\ve^{-1}\|\varphi^{\parallel,\pm} \|_{L^2_{a_*}(D_\tau\times \R)_\pm}\leq C\left( \|\chi^\pm g\|_{L^2_{a_*}(D_\tau\times \R)_\pm}+\|\chi^\pm \mathbb L_{w_\tau}(\psi {\tt V})\|_{L^2_{a_*}(D_\tau\times \R)_\pm}+\left\|\left[\mathbb L_{w_\tau}, \chi^\pm\right]\varphi^\parallel\right\|_{L^2_{a_*}(D_\tau\times \R)_\pm}\right)
\]
Using an argument similar to the one indicated in Remark \ref{rem concentration estimate} and Remark \ref{bootstrap} we can show from this
\begin{equation}
\label{equ 103}
\begin{aligned}
&\ve^{-1}\|\varphi^{\parallel,\pm} \|_{L^2_{a_*,\gamma_*}(D_\tau\times \R)_\pm}+\ve \|\nabla \varphi^{\parallel,\pm} \|_{L^2_{a_*,\gamma_*}(D_\tau\times \R)_\pm}+\ve\|D^2\varphi^{\parallel,\pm} \|_{L^2_{a_*,\gamma_*}(D_\tau\times \R)_\pm}\leq C R\\
&R\equiv\left( \|\chi^\pm g\|_{L^2_{a_*, \gamma_*}(D_\tau\times \R)_\pm}+\|\chi^\pm \mathbb L_{w_\tau}(\psi {\tt V})\|_{L^2_{a_*,\gamma_*}(D_\tau\times \R)_\pm}+\left\|\left[\mathbb L_{w_\tau}, \chi^\pm\right]\varphi^\parallel\right\|_{L^2_{a_*,\gamma_*}(D_\tau\times \R)_\pm}\right).
\end{aligned}
\end{equation}
We have 
\[
\begin{aligned}
\|\chi^\pm g\|_{L^2_{a_*, \gamma_*}(D_\tau\times \R)_\pm}&\leq \mathcal O(e^{\,-c\delta/\ve})\left(\|\varphi^\parallel\|_{H^2_{a_*, \gamma_*}(D_\tau\times \R)}+
\|\psi\|_{H^2_{a_*}(D_\tau)}\right)\\
\|\chi^\pm \mathbb L_{w_\tau}(\psi {\tt V})\|_{L^2_{a_*,\gamma_*}(D_\tau\times \R)_\pm}&\leq C\ve\|\psi\|_{H^2_{a_*}(D_\tau)}
\\
\left\|\left[\mathbb L_{w_\tau}, \chi^\pm\right]\varphi^\parallel\right\|_{L^2_{a_*,\gamma_*}(D_\tau\times \R)_\pm}&\leq \frac{C\ve }{K}\|\varphi^\parallel\|_{H^1_{a_*,\gamma_*}(D_\tau\times \R)}.
\end{aligned}
\]
Combining these inequalities we get from (\ref{equ 103})
\begin{equation}
\label{equ 104}
\ve^{-1}\|\varphi^{\parallel} \|_{L^2_{a_*}(D_\tau\times \R)}+\ve \|\nabla \varphi^{\parallel} \|_{L^2_{a_*,\gamma_*}(D_\tau\times \R)}+\ve\|D^2\varphi^{\parallel} \|_{L^2_{a_*,\gamma_*}(D_\tau\times \R)}\leq C\ve \|\psi\|_{H^2_{a_*}(D_\tau)}.
\end{equation}
This and estimate (\ref{op t}) imply
\[
\|T(\varphi^\parallel, \psi)\|_{L^2_{a_*}(D_\tau)}\leq C(\ve^{1-\alpha}+\ve) \|\psi\|_{H^2_{a_*}(D_\tau)}+C\delta\|\psi\|_{H^2_{a_*}(D_\tau)}.
\]
Decomposing $\psi=\psi^++\psi^-$, where $\psi^\pm=\chi^\pm\psi$ we can use the Fourier-Laplace transform to show that 
\[
\psi=\psi_0+\psi_1
\]
where $\psi_0$ is a linear combination of the the geometric Jacobi fields and 
\begin{equation}
\label{psi111}
\|\psi_1\|_{H^2_{a_*}(D_\tau)}\leq C\|T(\varphi^\parallel, \psi)\|_{L^2_{a_*}(D_\tau)}\leq C(\ve^{1-\alpha}+\ve) \|\psi\|_{H^2_{a_*}(D_\tau)}+C\delta\|\psi\|_{H^2_{a_*}(D_\tau)}.
\end{equation}
At the same time
from (\ref{orto phi}),  (\ref{equ 104}) and Lemma \ref{lemma asymptot kernel} we see  that $\psi$ satisfies
\[
\int_{D_\tau}\psi \Phi_\tau ^\bullet \cosh^{a_*} s\,dV_{D_\tau}=0\Longrightarrow \int_{D_\tau}\psi_0 \Phi_\tau ^\bullet \cosh^{a_*} s\,dV_{D_\tau}=-\int_{D_\tau}\psi_1 \Phi_\tau ^\bullet \cosh^{a_*} s\,dV_{D_\tau}
\]
for each geometric Jacobi field $\Phi_\tau^\bullet$ of $J_{D_\tau}$. It follows that 
\[
\|\psi_0\|_{L^2_{a_*}(D_\tau)}\leq C\|\psi_1\|_{L^2_{a_*}(D_\tau)}
\]
which, together with (\ref{psi111}), implies $\psi_1\equiv 0$ hence $\psi\equiv 0$, which is a contradiction. 
The proof of the proposition is complete.

\end{proof}


\providecommand{\bysame}{\leavevmode\hbox to3em{\hrulefill}\thinspace}
\providecommand{\MR}{\relax\ifhmode\unskip\space\fi MR }
\providecommand{\MRhref}[2]{%
  \href{http://www.ams.org/mathscinet-getitem?mr=#1}{#2}
}
\providecommand{\href}[2]{#2}

\end{document}